\documentclass[12pt,letterpaper, final]{amsart}
\usepackage{amssymb, amsmath, amsthm}
\usepackage{mathrsfs}  
\usepackage{graphicx}
\usepackage[colorlinks=true, citecolor=blue, linkcolor=red, urlcolor=blue]{hyperref}
\usepackage[text={6in,8in},centering]{geometry}
\usepackage[dvipsnames]{xcolor}  

\usepackage{mathrsfs}  
\usepackage{verbatim}
\usepackage{mathrsfs}  

\usepackage{xcolor}
\usepackage{bm}

\usepackage{ifdraft}
\ifoptionfinal{
	\usepackage[disable]{todonotes}
}{
	\usepackage[norefs, nocites]{refcheck}
	
	\usepackage[notref, notcite]{showkeys}
	\usepackage[bordercolor=white, color=white]{todonotes}
}

\makeatletter\providecommand\@dotsep{5}\def\listtodoname{List of Todos}\def\listoftodos{\hypersetup{linkcolor=black}\@starttoc{tdo}\listtodoname\hypersetup{linkcolor=blue}}\makeatother

\usepackage{hyperref}
\usepackage{etoolbox}

\newtheorem{theorem}{Theorem}[section]
\newtheorem{lemma}[theorem]{Lemma}

\newtheorem{proposition}[theorem]{Proposition}
\theoremstyle{definition}

\theoremstyle{remark}
\newtheorem{remark}[theorem]{Remark}

\numberwithin{equation}{section}

\def\A{\mathcal A}

\def\D{\mathbb D}

\def\N{\mathbb N}

\def\O{\mathcal O}
\def\L{\mathcal L}

\renewcommand{\leq}{\leqslant}
\renewcommand{\geq}{\geqslant}

\newcommand{\norm}[1]{\left\|#1 \right\|}

\newcommand*\xbar[1]{%
	\hbox{%
		\vbox{%
			\hrule height 0.5pt 
			\kern0.5ex
			\hbox{%
				\ensuremath{#1}%
			}%
		}%
	}%
} 


\title[{\tiny  Calder\'{o}n problem for a logarithmic Schr\"{o}dinger operator on manifolds}]{Anisotropic Calder\'{o}n problem for a logarithmic Schr\"{o}dinger operator of order $2+$   on closed Riemannian manifolds}

\author[ ]{SAUMYAJIT DAS}
\address
{SAUMYAJIT DAS, Harish-Chandra Research Institute, A CI of Homi Bhabha National Institute, Chhatnag Road, Jhunsi, Allahabad 211 019, India }

\email{saumyajit.math.das@gmail.com}

\author[ ]{TUHIN GHOSH}
\address{TUHIN GHOSH,  Harish-Chandra Research Institute, A CI of Homi Bhabha National Institute, Chhatnag Road, Jhunsi, Allahabad 211 019, India}
\email{tuhinghosh@hri.res.in}

\author[]{Susovan Pramanik}
\address
{SUSOVAN PRAMANIK, Harish-Chandra Research Institute, A CI of Homi Bhabha National Institute, Chhatnag Road, Jhunsi, Allahabad 211 019, India}
\email{susovanpramanik@hri.res.in}


\keywords{inverse problems, nonlocal Calder\'{o}n problem, nonlocal Laplacian,  nonlocal Schr\"{o}dinger equation, nonlocal Gel'fand problem, unique continuation,}
\begin{document}
	\maketitle
	\begin{abstract}  
		In this article, we study the anisotropic Calderón problems for the non local logarithimic Schr\"{o}dinger operators $(-\Delta_g+m) \log(-\Delta_{g}+m)+V$ with $m> 1$  on a closed, connected, smooth Riemannian manifold of dimension $n\geq2$. We will show that, for the operator $(-\Delta_g+m) \log(-\Delta_{g}+m)+V$, the recovery of both the Riemannian metric and the potential is possible from the Cauchy data, in the setting of a common underlying manifold with varying metrics. This result is unconditional. The last result can be extended to the case of setwise distinct manifolds also. In particular, we demonstrate that for setwise distinct manifolds, the Cauchy data associated with the operator $(-\Delta_g+m) \log(-\Delta_{g}+m)+V$, measured on a suitable non-empty open subset, uniquely determines the Riemannian manifold up to isometry and the potential up to an appropriate gauge transformation. This particular result is unconditional when the potential is supported entirely within the observation set. In the more general setting—where the potential may take nonzero values outside the observation set—specific geometric assumptions are required on both the observation set and the unknown region of the manifold. 
		
		\medskip
	\end{abstract}

	\section{Introduction}
	
	The study of non-local operators has been a great interest in mathematical studies in recent years. Non-local operators encompass a broad class of mappings: they arise in integro-differential equations (e.g., \cite{CS07}, \cite{ros2014dirichlet}), in fractional geometric flows (e.g., \cite{JX11}), in analysis of non-local boundary controllability (e.g., \cite{biccari2025boundary}), and in inverse-type problems such as the Calderón problem (e.g., \cite{ghosh2020calderon}). The Calder\'{o}n problem asks whether one can determine the electrical conductivity of a medium by making voltage and current measurements at the boundary. In the anisotropic case, the Calder\'{o}n problem has a geometric nature (cf.\cite{LU89},\cite{Sal13}); in particular, it investigates whether one can determine the differential structure and topological characteristics (such as homology, cohomology, and Betti numbers (\cite{belishev08}) by observing the solutions of the Schr\"odinger operator from any open set of the manifold. Additionally, it also investigates the unique recovery of the potential, up to a certain gauge transformation. Let $(M,g)$ be a smooth closed (compact and without boundary) and connected Riemannian manifold of dimension $n\geq 2$. Let $V\in C^{\infty}(M)$ be a smooth function, referred to as the potential. We fix a non-empty open set $\mathcal{O}\subset M$, such that $M\setminus \overline{\mathcal{O}}$ is non-empty. We refer the set $\mathcal{O}$ as the observation set. We assume that the metric and the potential, as well as certain data arising from the Schr\"odinger equation, are known in the observation set $\mathcal{O}$, while the metric, potential, and geometric structure remain inaccessible in  $M\setminus\mathcal{O}$. In particular, we refer to the known data in the observation set as the Cauchy data set.
	\newline
	We begin with the well-known and extensively studied anisotropic Calderón problems. The first of these concerns whether the geometry of a Riemannian manifold can be recovered by analyzing the associated Cauchy data set
	\begin{equation}\label{0pen no potential}
		\widetilde{\mathcal{C}}^\mathcal{O}_{M, g}= \{(u|_\mathcal{O}, (-\Delta_g)u|_\mathcal{O}) \mid u \in C^\infty(M), \, -\Delta_g u = 0 \,\text{in} \,\, M\setminus \overline{\mathcal{O}}\},
	\end{equation}
	i.e., it investigates the possibility of recovering the geometric information of the unknown region of the manifold by analyzing harmonic functions defined there, where these functions are known on the observation set. In other words, suppose the solution with respect to the source term $f$ is known inside the observation set, where the source term is vanishing outside the observation set, i.e.,
	\begin{align*}
		u|_{\O}\ \text{known}, \quad \mbox{where}\ u \ \mbox{satisfy}\ -\Delta_g u=f, \ f\in C_0^{\infty}(\O).
	\end{align*} 
	Here $C_0^{\infty}(\O)$ denotes all the smooth functions in $M$ vanishing outside $\O$. The problem then investigates the possibility of recovering the geometric characteristics of the unknown region $M\setminus\O$ from this source-to-solution data. Note that knowledge of the function 
	$u$ in the observation set is sufficient to determine the Cauchy data set, since the Laplace–Beltrami operator is local in nature.  The second problem addresses the recovery of both the potential and the geometric structure of the Riemannian manifold. More precisely, it examines whether knowledge of the Cauchy data set 
	\begin{equation}\label{0pen}
		\widetilde{\mathcal{C}}^\mathcal{O}_{M, g, V}= \{(u|_\mathcal{O}, (-\Delta_g)u|_\mathcal{O}) \mid u \in C^\infty(M), \, -\Delta_g u+ Vu = 0 \,\text{in} \,\, M\setminus \overline{\mathcal{O}}\}
	\end{equation}
	determines the isometry class of the manifold $(M, g)$ and the potential $V$ up to the corresponding gauge transformation. \((-\Delta_g) \) represents the positive Laplace-Beltrami operator on \( (M, g)\), given in the local coordinate via
	\[
	-\Delta_g=-\frac{1}{\sqrt{|g|}}\sum\limits_{j,k=1}^{n} \frac{\partial}{\partial x^j}\left( \sqrt{|g|}g^{jk} \frac{\partial}{\partial^k}\right), 
	\]
	where $(g^{jk})=(g_{jk})^{-1}$ and $|g|=det (g_{jk})$. The problem was first described and analyzed by Calder\'on  in \cite{Calderon1980InverseBoundary}. The problem is solved in dim$(M)$=2 [see \cite{Nach96}, \cite{KL06}, \cite{Buk08}, \cite{GT11}, \cite{imanuvilov2011determination}, \cite{imanuvilov2012partial}, \cite{LU01}]. However, due to the conformal invariance of the Laplace–Beltrami operator in two dimensions, an additional gauge is required to solve the problem. For dimensions three and higher, the problem has been addressed under the assumption that the manifold is real analytic (see the references \cite{LU89}, \cite{LU01}, \cite{LTU03}), but it is still open for smooth manifolds. The extension looks promising due to the positive observation in transversally anisotropic geometries, see \cite{DKSU09}, \cite{DKLS16}. For a comprehensive survey for the Calder\'on problem, we refer the readers to \cite{SU87}, \cite{KSU07}, \cite{Uhl14}. 
	\newline
	The Laplacian can be viewed as the limiting operator of fractional Laplacian $(-\Delta_g)^s$ for $s\in(0,1)$. In Euclidean domain $\mathbb{R}^n$, fractional Laplacian associated to jump L\'evy's process. One can think of recover the Laplacian as $s\to 1$ and the identity map as $s\to 0$. The linearized deviation of $(-\Delta_g)^s$ from the identity and the Laplacian is given by:
	\[
	\frac{d}{ds} (-\Delta_g)^s\Big|_{s=0}= \log {(-\Delta_g)}, \  \ \ \frac{d}{ds} (-\Delta_g)^s\Big|_{s=1}= \log {(-\Delta_g)}=(-\Delta_{g})\log{(-\Delta_g)}, \ \text{respectively}.
	\] 
	Hence, in the asymptotic expansion of the fractional Laplacian, the operators 
	\[
	\log\bigl(-\Delta_g\bigr)
	\quad\text{and}\quad
	(-\Delta_g)\,\log\bigl(-\Delta_g\bigr)
	\]
	appear respectively around \(s\!\sim\!0\) and \(s\!\sim\!1\). We refer the reader to the articles \cite{chen2025logarithmic} and \cite{chen2019dirichlet} for a comprehensive study of the operator 
	\(\log\bigl(-\Delta_g\bigr)\), which is an operator of order \(0+\). In this article we focus on the asymptotic behaviour near \(s = 1\), i.e., the operator $(-\Delta_g)\,\log\bigl(-\Delta_g\bigr)$, which is a non local operator of order $2+$.  Note that the order of this non local operator is almost same as order of the Laplace-Beltrami operator. However, for well-posedness of this operator via functional calculus, instead of $
	(-\Delta_g)\,\log\!\bigl(-\Delta_g\bigr)
	$ we consider the operator $
	(-\Delta_g + m)\,\log\!\bigl(-\Delta_g + m\bigr)$ with \(m>1\) a constant. The eigenvalues of the operator $
	(-\Delta_g + m)\,\log\!\bigl(-\Delta_g + m\bigr)$ are strictly greater than \(1\); hence the logarithmic part is always positive. This non local operator can be viewed as the near $s\sim 1$ asymptotes of the fractional relativistic operator $(-\Delta_g + m)^s$, where $s\in(0,1)$. In particular:
	\[
	\frac{d}{ds} (-\Delta_g + m)^s\Big|_{s=1}= (-\Delta_g + m)\,\log\!\bigl(-\Delta_g + m\bigr).
	\]
	Before moving to the Calderón problem for this particular logarithmic-relativistic operator, we begin by describing the fractional version of the Calderón problem, non local in nature. The problem begins with the recovery of the geometric information of the Riemannian manifold from the Cauchy data set
	\begin{equation}\label{0pen fractional no potential}
		\widetilde{\mathcal{C}}^\mathcal{O}_{M, g}= \{(u|_\mathcal{O}, (-\Delta_g)^{\alpha}u|_\mathcal{O}) \mid u \in C^\infty(M), \, (-\Delta_g)^{\alpha} u= 0 \,\text{in} \,\, M\setminus \overline{\mathcal{O}}\}.
	\end{equation}
	This Cauchy data set can be interpreted analogously to the source to solution data for the fractional Laplacian, where the source consists of smooth functions vanishing outside the observation set, and one observes both the solution and the action of the fractional Laplacian on it within the observation region. Due to the non local nature of the fractional Laplacian, here we need to know both the solution and the action of the fractional Laplacian on it within the observation set. The problem can be generalised to find both the potential and the isometry class of the Riemanian manifold $(M,g)$. Here, the question is whether the following Cauchy data set
	\begin{equation}\label{0pen fractional}
		\widetilde{\mathcal{C}}^\mathcal{O}_{M, g, V}= \{(u|_\mathcal{O}, (-\Delta_g)^{\alpha}u|_\mathcal{O}) \mid u \in C^\infty(M), \, (-\Delta_g)^{\alpha} u+ Vu = 0 \,\text{in} \,\, M\setminus \overline{\mathcal{O}}\}
	\end{equation}
	determines the topological and differential aspects of the manifold and the potential. Here, the fractional exponent $\alpha$ is assumed to lie in the interval $(0,1)$. When the potential $V=0$, the problem is solved in \cite{Fei24}, \cite{FGKU25}, \cite{ruland2023revisiting}. Then method with zero potential has been extended to non compact manifold also in \cite{CO24}. In \cite{FKU24} authors recover both the Riemannian metric and the potential. Although in \cite{FKU24}, authors needs two important assumptions to recover both the metric and the potential. One of the assumption is that the potential should vanish inside the observation set and the other is the unknown portion of the Riemannian manifold $M\setminus\mathcal{O}$ is nontrapping and atleast one antipodal set corresponding to a point in the observation set should contained inside the observation set $\mathcal{O}$. This additional geometric assumption guarantees that the geodesic flow transmits geometric information from the unknown region of the manifold to the observation set.  
	\newline

	\begin{remark}
		The study of non-local inverse problems has been motivated from various physical phenomenon described in \cite{AMRT10}, \cite{GO08}, \cite{Zim23}, \cite{lin2024uniqueness}, \cite{BV16}. The study of non-local inverse problems and boundary control also occupies a prominent place in mathematical research.  For the sake of the readers We like to mention some references here:   \cite{Fei24}, \cite{GSU20}, \cite{GLX17}, \cite{RS20}, \cite{RS2020}, \cite{GRSU20}, \cite{MLR20}, \cite{Cov20}, \cite{Li20}, \cite{HL20}, \cite{BGU21}, \cite{KLW22}, \cite{CGR22}, \cite{Gho22}, \cite{Zim23}, \cite{HU24}, \cite{HLW24}, \cite{Das25}. However, the operator's order in consideration falls within $(0, 2)$.  We like to emphasize the fact that in this article the order of non local operator is $2+$. For general definition of order of the operators we refer the readers to \cite{GS94}, \cite{Tay81}, \cite{Tay23}.  
	\end{remark}
	
	Recently, non-local Schrödinger operators of order greater than $2$ have attracted significant interest in the mathematical community. One of such operator is $(-\Delta_g+m)\log{(-\Delta_g+m)}$ with $m>0$ whose order is $2+$. In \cite{pramanik2025anisotropic}, author analyzed the Calder\'on problem with $V\equiv 0$  and recovered the Riemannian metric with the help of heat flow. In a different setting, the authors in \cite{HLW24} recently studied inverse problems concerning the recovery of a potential associated with the near-zero logarithmic operator, modelled through Schr\"{o}dinger-type equations in flat spaces. In this article, we generalize the result by including a potential supported on the observation set $\mathcal{O}$, and then extend the same result to arbitrary smooth potentials under the geometric assumptions described in \cite{FKU24}. Mostly our calculations  follow the ideas outlined in \cite{FKU24}. Below, we describe the two questions in more precise mathematical terms.

	\textbf{Recovery of the geometry of the Riemannian manifold:}
	\newline
	This problem can be formulated via the source-to-solution data within the observation set, analogous to the case of the fractional Laplacian or the complex power of Laplacian. Here, the operator is the composition of Laplacian with logarithmic Laplacian. The question is: whether the following Cauchy data set
	\begin{equation}
		\label{cauchy_data, smooth, only metric}
		\mathcal{C}_{M,g,V}^O =
		\left\{
		\begin{aligned}
			(u|_O, ((-\Delta_g+m) \log{(-\Delta_g
				+m)} u)|_O) \,:\, u \in C^\infty(M),\\
			(-\Delta_g+m)\log{(-\Delta_g+m)}u+Vu = 0 \ \text{on} \ M \setminus \overline{O}, \, V\in C_0^{\infty}(\O).
		\end{aligned}
		\right \}
	\end{equation}
	is sufficient to determine the topological and differential structure of the Riemannian manifold. More precisely, we have the following result:
	
	\begin{theorem}
		\label{thm_potential_with compact_supp}
		Let $m> 1 $. For $j=1,2$, let $(M_j,g_j)$ be a smooth closed and connected Riemannian manifold of dimension $n \geq 2$. Let $\mathcal{O}\subset M_1\cap M_2$ be a nonempty open connected set such that $M_j\setminus\overline{\mathcal{O}}\ne \emptyset$, $j=1,2$,   and assume that for $j=1,2$, $(\mathcal{O},g_1)=(\mathcal{O},g_2):=(\mathcal{O},g)$. Let $V \in C_0^{\infty}(\O)$ such that  zero is not an eigenvalue of the operators $(-\Delta_{g_j}+m)\log{(-\Delta_{g_j}+m)}+V$, defined in $M_j$, for $j=1,2$.  Then the equality of the Cauchy data set over $(\mathcal{O}, g)$,  $$i.e., \quad \mathcal C_{M_1,g_1,V}^{\mathcal{O}} = \mathcal C_{M_2,g_2,V}^{\mathcal{O}}\,,$$ implies the existence of a diffeomorphism $\Phi: M_1 \to M_2$ such that $\Phi^\ast g_2 = g_1$.
	\end{theorem}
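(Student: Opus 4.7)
The plan is to reduce the problem to the heat-flow metric recovery of \cite{pramanik2025anisotropic} by following a source-to-solution strategy adapted from \cite{FKU24}. Set $P_j := (-\Delta_{g_j}+m)\log(-\Delta_{g_j}+m)+V$ and $A_j := (-\Delta_{g_j}+m)\log(-\Delta_{g_j}+m)$. Since $0$ is not an eigenvalue of $P_j$, for any $f\in C_0^\infty(\mathcal{O})$ the equation $P_j u_j = f$ has a unique smooth solution on $M_j$; because $\operatorname{supp} V\subset \mathcal{O}$, the pair $(u_j|_\mathcal{O}, ((-\Delta_{g_j}+m)\log(-\Delta_{g_j}+m)u_j)|_\mathcal{O})$ lies in $\mathcal{C}^\mathcal{O}_{M_j,g_j,V}$. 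From $\mathcal{C}^\mathcal{O}_{M_1,g_1,V}=\mathcal{C}^\mathcal{O}_{M_2,g_2,V}$ one extracts $\tilde u_2$ on $M_2$ with $P_2\tilde u_2=0$ on $M_2\setminus\overline{\mathcal{O}}$ and Cauchy data matching those of $u_1$; using $P_1u_1=f$ on $\mathcal{O}$ together with $g_1=g_2=g$ and $V$ known, a direct check shows $P_2\tilde u_2=f$ everywhere on $M_2$, so by uniqueness $\tilde u_2=u_2$ and the source-to-solution maps $L_j^V:f\mapsto u_j|_\mathcal{O}$ satisfy $L_1^V = L_2^V$ on $C_0^\infty(\mathcal{O})$.

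Since $V$ is fixed and supported in $\mathcal{O}$, where $g_1=g_2$ makes the multiplication operator $V$ unambiguous, the Neumann expansion of $P_j^{-1}=(I+A_j^{-1}V)^{-1}A_j^{-1}$ expresses the potential-free source-to-solution map $L_j^0:f\mapsto (A_j^{-1}f)|_\mathcal{O}$ as a universal functional of $L_j^V$, $V$, and the restriction operator to $\mathcal{O}$. Hence $L_1^V = L_2^V$ upgrades to $L_1^0 = L_2^0$, i.e., the Schwartz kernels of $A_1^{-1}$ and $A_2^{-1}$ agree on $\mathcal{O}\times\mathcal{O}$. Because $m>1$, the spectrum of $-\Delta_{g_j}+m$ lies in $[m,\infty)\subset(1,\infty)$, so $\lambda\log\lambda\geq m\log m>0$ on the spectrum and $A_j$ is a positive self-adjoint operator with bounded inverse.

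The next step is to promote equality of the kernels of $A_j^{-1}$ on $\mathcal{O}\times\mathcal{O}$ to equality of the heat kernels $e^{-t(-\Delta_{g_j}+m)}$ on $\mathcal{O}\times\mathcal{O}$ for every $t>0$. The strategy is to iterate $L_j^0$ by pre- and post-composing with multiplication operators $M_\psi$ with $\operatorname{supp}\psi\subset\mathcal{O}$, thereby gaining access to all localised operators of the form $\chi_\mathcal{O} A_j^{-1} M_{\psi_1} A_j^{-1} M_{\psi_2}\cdots A_j^{-1}\chi_\mathcal{O}$. A moment/Tauberian argument on the spectral side, controlled by Weyl's asymptotics for $-\Delta_{g_j}$, should then recover the full spectral data $\{(\lambda_k,\phi_k^j|_\mathcal{O})\}$, from which the heat kernel on $\mathcal{O}\times\mathcal{O}$ is reconstructed by spectral summation.

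Finally, equality of the heat kernels of $-\Delta_{g_j}+m$ on $\mathcal{O}\times\mathcal{O}$ for every $t>0$ is exactly the input of the metric-recovery theorem of \cite{pramanik2025anisotropic}, which yields a diffeomorphism $\Phi:M_1\to M_2$ with $\Phi|_\mathcal{O}=\operatorname{id}$ and $\Phi^\ast g_2=g_1$. The hard part is the spectral-inversion step: unlike the fractional case of \cite{FKU24}, where the symbol $\lambda^{2\alpha}$ is a pure power and the Mellin inversion from $A^{-1}$ to $e^{-tA}$ is transparent, the logarithmic correction in $\lambda\log\lambda$ obstructs a clean inversion. Making the iterated source-to-solution moment problem rigorous, and showing that its solution uniquely identifies the spectrum together with the restrictions of the eigenfunctions to $\mathcal{O}$, is the technical heart of the argument.
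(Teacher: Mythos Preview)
Your Steps 1--2 contain a genuinely different and valid reduction that the paper does not use: exploiting $\operatorname{supp}V\subset\mathcal{O}$ \emph{upfront} via the resolvent identity $A_j^{-1}=(I-P_j^{-1}V)^{-1}P_j^{-1}$ to upgrade $L_1^V=L_2^V$ to $L_1^0=L_2^0$. This works, though the Neumann phrasing is misleading: you do not need the series to converge, only that $I-M_VL^V$ is a bijection on $C_0^\infty(\mathcal{O})$. Injectivity holds because $(I-M_VL^V)f=0$ forces $A_j(P_j^{-1}f)=0$ on all of $M_j$ and hence $f=0$; surjectivity is given explicitly by $g\mapsto g+VA_j^{-1}g$.

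The real gap is that you do not see your own finish line. Once $L_1^0=L_2^0$, you have equality of the potential-free Cauchy data $\mathcal C^{\mathcal O}_{M_1,g_1,0}=\mathcal C^{\mathcal O}_{M_2,g_2,0}$, and this is \emph{exactly} the input of \cite{pramanik2025anisotropic} (the $V\equiv0$ case of the present theorem), which already delivers the diffeomorphism. Your Steps 3--5 are therefore unnecessary, and the ``iterated moment/Tauberian'' scheme you flag as the hard part is neither needed nor the right mechanism: the passage from the logarithmic Cauchy data to the heat kernel goes through the integral representation $\mathcal L_g=\int_0^\infty t^{-1}(e^{-t}-e^{-t\mathcal A_g})\mathcal A_g\,dt$ combined with the UCP, not through products $A_j^{-1}M_{\psi_1}A_j^{-1}\cdots$. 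You also misattribute the heat-kernel-to-metric step to \cite{pramanik2025anisotropic}; that step is \cite{FGKU25}.

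For comparison, the paper never reduces to $V=0$. It builds general machinery---valid for arbitrary smooth $V$, not only compactly supported ones---that from the $V$-Cauchy data recovers the full Gel'fand spectral data $\{\lambda_k,\phi_k|_{\mathcal O}\}$ via the heat-integral formula for $\mathcal L_g$, a Laplace-transform/analytic-continuation argument, and the UCP. Only \emph{after} the spectral data are in hand does the compact support of $V$ enter, to show that the terms $e^{-t\mathcal A_{g_j}}(Vu_j)$ agree on $\mathcal O$ (since $\langle Vu_j,\phi_k^{(j)}\rangle$ is an integral over $\operatorname{supp}V\subset\mathcal O$, where both $u_j$ and the eigenfunctions match), whence the heat kernels on $\mathcal O\times\mathcal O$ coincide. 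The payoff of the paper's route is that the same spectral machinery is reused verbatim for the other theorems where $V$ is not compactly supported; your resolvent reduction, while slicker for this particular statement, does not extend to that setting.
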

	Here, when the potential is supported within the observation set, the heat flow carries geometric information from the unknown part of the manifold. Therefore, no additional conditions need to be imposed on the unknown region of the manifold. 
	
	\begin{remark}
		The case where the potential vanishes identically, i.e., $V\equiv 0$, follows as a straightforward corollary of the above result. This implies that, in the absence of a potential, the Cauchy data set alone suffices to recover the geometric information of the Riemannian manifold. Hence we obtain the result described in \cite{pramanik2025anisotropic} as a corollary of the above theorem.
	\end{remark}
	
	\textbf{Reovery of the geometry of Riemannian manifold and the potential:}
	\newline
	The second problem investigate the recovery of the potential too along with the isometry class of the Riemannian manifold. We examine, whether the following Cauchy data set
	\begin{equation}\label{cauchy_data, smooth}
		\mathcal{C}_{M,g,V}^O =
		\left\{
		\begin{aligned}
			(u|_O, ((-\Delta_g+m) \log{(-\Delta_g+m)} u)|_O) \,:\, u \in C^\infty(M),\\
			(-\Delta_g+m)\log{(-\Delta_g+m)}u+Vu = 0 \ \text{on} \ M \setminus \overline{O}.
		\end{aligned}
		\right \}
	\end{equation}
	is sufficient to recover both the geometric aspects of the Manifold and the potential. While our study largely follows the techniques developed in  \cite{FKU24}, we emphasize once again that the assumptions required in \cite{FKU24}—particularly that concerning the vanishing of the potential—are not needed in our approach. The problem is categorized into the following two types.
	\begin{itemize}
		\item [\textbf{A1})] In this case where $M_1$ and $M_2$ coincide as sets, i.e., $M_1=M_2$ setwise.
		\item [\textbf{A2})] In this case $M_1\neq M_2$ but the observation set $\O$ is contained in $M_1\cap M_2$.
	\end{itemize}
	In the first case, we are able to recover information about both the geometry of the manifold and the potential. The precise result established in this article is as follows:
	\begin{theorem}
		\label{recovering_metric_and_potential_from_one_manifold}
		Let $m> 1 $. For $j=1,2$, let $(M,g_j)$ be a smooth closed and connected Riemannian manifold of dimension $n \geq 2$, and let $V_j \in C^{\infty}(M)$. Further assume zero is not an eigenvalue of the operators $\left( (-\Delta_{g_j}+m)\log{(-\Delta_{g_j}+m)} +V_j \right)$, defined in $(M,g_j)$, for $j=1,2$. Let $\mathcal{O}\subset M$ be a nonempty open connected set such that $M\setminus\overline{\mathcal{O}}\ne \emptyset$   and assume that  $(\mathcal{O},g_1)=(\mathcal{O},g_2):=(\mathcal{O},g)$ and $V_1|_{\mathcal{O}}=V_2|_{\mathcal{O}}$. 
		\newline 
		Then the equality of the Cauchy data set over $(\mathcal{O}, g)$,  $$i.e. \quad \mathcal C_{M,g_1,V_1}^{\mathcal{O}} = \mathcal C_{M,g_2,V_2}^{\mathcal{O}}\,,$$ implies the existence of a diffeomorphism $\Phi: M \to M$ such that $\Phi^\ast g_2 = g_1$ and $V_1= V_2\circ\Phi$. Moreover, the smooth diffeomorphism $\Phi$ is equal to identity on the set $\mathcal{O}$.
	\end{theorem}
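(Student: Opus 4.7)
The plan is to proceed in two phases: first recover the metric by reducing essentially to Theorem \ref{thm_potential_with compact_supp}, and then recover the potential by pulling back through the resulting diffeomorphism. Throughout, write $L_j := (-\Delta_{g_j}+m)\log(-\Delta_{g_j}+m)+V_j$ and denote by $\{(\mu_k^j,\phi_k^j)\}$ the spectral data of $L_j$ on $(M,g_j)$, all eigenvalues being nonzero by assumption so $L_j$ is invertible.

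The first step is to convert the equality of Cauchy data into a source-to-solution identity. Given $f\in C_0^\infty(\mathcal{O})$, let $u_j=L_j^{-1}f$. A Runge/approximation argument — using the fact that the trace map onto the Cauchy data on $\mathcal{O}$ has dense range in a suitable sense — together with the identity $g_1=g_2$ and $V_1=V_2$ on $\mathcal{O}$ allows one to deduce that $u_1|_\mathcal{O}=u_2|_\mathcal{O}$ whenever $f$ is supported in $\mathcal{O}$, mirroring Lemma-level statements in \cite{FKU24}. By linearity and the spectral theorem, this equality upgrades to an equality of the form $F(L_1)f|_\mathcal{O}=F(L_2)f|_\mathcal{O}$ for all $f\in C_0^\infty(\mathcal{O})$ and a rich class of functions $F$ obtainable from $1/\lambda$ by analytic manipulations (resolvents, Laplace transforms, etc.).

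Next I would transfer this operator-functional identity onto the associated heat flows. Using a Balakrishnan-type formula, one writes $\lambda\log\lambda$ (for $\lambda>1$) as a convergent integral transform of $e^{-t\lambda}$; inverting, one expresses $e^{-t(-\Delta_{g_j}+m)}$ acting on a compactly supported source in $\mathcal{O}$ as a suitable functional of $L_j^{-1}$. The potential $V_j$ enters as a lower-order perturbation and can be absorbed through a Neumann-type series in the resolvent, exploiting again that $V_1=V_2$ in $\mathcal{O}$ so that the first iterate already matches. The outcome should be: for every $t>0$ and $f\in C_0^\infty(\mathcal{O})$,
\begin{equation*}
e^{-t(-\Delta_{g_1}+m)}f\big|_\mathcal{O}=e^{-t(-\Delta_{g_2}+m)}f\big|_\mathcal{O}.
\end{equation*}
At this point one can invoke the heat-flow based construction of \cite{pramanik2025anisotropic} and Theorem \ref{thm_potential_with compact_supp} — in that theorem the potential was assumed supported in $\mathcal{O}$, but the proof in fact only uses that the heat flows agree on $\mathcal{O}$ for sources in $C_0^\infty(\mathcal{O})$, which is exactly what we have. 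Hence there exists a diffeomorphism $\Phi:M\to M$ with $\Phi^\ast g_2=g_1$ and $\Phi=\mathrm{id}$ on $\mathcal{O}$.

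With the metric identified, the recovery of the potential is the final, comparatively softer step. Pull $L_2$ back through $\Phi$; since $\Phi^\ast g_2=g_1$, the pulled-back operator agrees with $L_1$ except possibly in the potential term, so the problem reduces to showing that two logarithmic Schrödinger operators on the \emph{same} Riemannian manifold $(M,g_1)$ with potentials $V_1$ and $V_2\circ\Phi$ (agreeing on $\mathcal{O}$) and identical Cauchy data on $\mathcal{O}$ must have identical potentials. This is handled by a standard Alessandrini-type integral identity: testing the difference equation against two families of solutions and exploiting density of products of such solutions in $L^1(M\setminus\overline{\mathcal{O}})$ — which here follows from the strong unique continuation properties already used above — forces $V_1=V_2\circ\Phi$ pointwise.

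The main obstacle I anticipate is Step 2, the passage from the functional calculus of $L_j$ to that of the heat semigroup of $-\Delta_{g_j}+m$ in the presence of the potential. The logarithmic factor is of order strictly greater than the Laplacian part, so the usual Bochner subordination machinery and Frullani identities must be adapted carefully; and the perturbation $V_j$, which need not vanish outside $\mathcal{O}$, must be absorbed without losing the localisation of the test data in $\mathcal{O}$. The fact that $M_1=M_2$ setwise is exactly what allows this manipulation, since both operators act on the same ambient $L^2(M)$ and the Neumann series for $(L_j-\text{heat generator})^{-1}$ can be compared term by term.
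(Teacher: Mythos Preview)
Your overall architecture (source-to-solution $\Rightarrow$ heat flow / spectral information for $-\Delta_{g_j}+m$ $\Rightarrow$ isometry $\Rightarrow$ pull back and recover potential) matches the paper's, but the mechanism you propose for the crucial middle step is not workable, and the paper gets there by a different route.

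\textbf{The gap.} Your Step~2 tries to pass from the functional calculus of $L_j=(-\Delta_{g_j}+m)\log(-\Delta_{g_j}+m)+V_j$ to that of $A_j=-\Delta_{g_j}+m$ by ``absorbing'' $V_j$ through a Neumann series in the resolvent. This does not localise: each iterate applies the nonlocal inverse of $A_j\log A_j$, which spreads a function in $C_0^\infty(\mathcal{O})$ over all of $M$, and then multiplies by $V_j$ on $M\setminus\overline{\mathcal{O}}$, where $V_1$ and $V_2$ are not assumed to agree. So beyond the zeroth term the iterates cannot be matched from $V_1|_\mathcal{O}=V_2|_\mathcal{O}$ alone. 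The same objection applies to any attempt to express $e^{-tA_j}$ as an integral transform of $(L_j)^{-1}$: the potential sits inside $L_j$ globally, and no Balakrishnan/Frullani identity will strip it out using only data on $\mathcal{O}$. You correctly flag this as the main obstacle; it is genuinely an obstruction to the method, not just a technicality.

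\textbf{What the paper does instead.} The paper never tries to disentangle $V_j$ from $L_j$. From Cauchy data equality one first gets, for each $f\in C_0^\infty(\mathcal{O})$, that the source solutions $u_j=u_j^f$ satisfy $u_1|_\mathcal{O}=u_2|_\mathcal{O}$ and $(A_1\log A_1)u_1|_\mathcal{O}=(A_2\log A_2)u_2|_\mathcal{O}$ (this uses $V_1|_\mathcal{O}=V_2|_\mathcal{O}$, not a Runge argument). Proposition~\ref{key_prop_for_all_thm} then uses the pointwise heat-integral representation of $A_j\log A_j$ and a moment/Laplace-transform argument to deduce $e^{-tA_1}u_1=e^{-tA_2}u_2$ on $\mathcal{O}$ for all $t>0$. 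Proposition~\ref{Spectural data equal prop} runs this over all $f$ and extracts directly the spectral data of $-\Delta_{g_j}$ restricted to $\mathcal{O}$ (eigenvalues, multiplicities, eigenfunctions on $\mathcal{O}$); the potential enters only through the equation for $u_j^f$ and is handled via Lemma~\ref{non-zero_inn_with_eigen-fun}. Theorem~\ref{Helin} then gives the isometry. The key point: the heat semigroup one compares is that of $A_j$, applied to the \emph{solutions} $u_j$ (which carry the potential implicitly), not to the bare test functions $f$.

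\textbf{Potential recovery.} Your Alessandrini-identity plan would require density of products of solutions in $L^1(M\setminus\overline{\mathcal{O}})$, which is not established here and does not follow from UCP alone. The paper's argument (as in the proof of Theorem~\ref{update_with_smooth_potential}) is more direct: after pulling back via $\Phi$, the Cauchy data for $V_1$ and $V_2\circ\Phi$ agree on the same manifold $(M,g_1)$; then for any $f\in C_0^\infty(\mathcal{O})$ the two solutions share Cauchy data on $\mathcal{O}$, so by Theorem~\ref{thm_ucp} they coincide on all of $M$, and subtracting the equations gives $(V_1-V_2\circ\Phi)u_1=0$ on $M\setminus\overline{\mathcal{O}}$. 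A second application of UCP shows the zero set of $u_1$ has empty interior, forcing $V_1=V_2\circ\Phi$.
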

	In this case, we do not require any specific assumptions on the observation set, other than that it is open and that both the set and its complement in $M$ are non-empty. The geometry of the manifold and the potential can be recovered either by analyzing the heat flow through the observation set or, in an equivalent way, by studying the spectral data restricted to the observation set.
	
	\vspace{.1cm}
	
	However, for the second case $\textbf{A2}$, where the manifolds are setwise distinct, we do require a geometric condition on the unknown region of the manifold—specifically, the nontrapping condition—as well as the assumption that at least one of the antipodal sets lies inside the observation set, both of which are essential to establishing our result as outlined in \cite{FKU24}. The geometric assumptions—namely, the nontrapping condition and the antipodal set—are described in detail below (see \cite{FKU24}).
	\newline\textbf{Nontrapping:} 
	let us denote the unit tangent bundle of the manifold $M \setminus O$ by $S(M \setminus O)$, and for any $(x, v) \in S(M \setminus O)$, we denote by $\gamma_{x,v}$ the unique geodesic on $M \setminus O$ such that $\gamma_{x,v}(0) = x$ and $\dot{\gamma}_{x,v}(0) = v$. Here, we view $M \setminus O$ as a compact manifold with boundary embedded in the closed manifold $M$, and the geodesic vector field is defined on the whole of $M$. We consider the forward and backward exit time functions, defined by
	\begin{align*}
		&\tau_+: S(M \setminus O) \to [0, +\infty], \ \tau_+(x,v) = \sup\{t \ge 0: \gamma_{x,v}(s) \in M \setminus O\, \, \forall s \in [0, t]\},\\
		&\tau_-: S(M \setminus O) \to [-\infty, 0], \ \tau_-(x,v) = -\sup\{t \ge 0: \gamma_{x,v}(s) \in M \setminus O\, \,  \forall s \in [-t, 0]\}.
	\end{align*}
	The fact that the manifold $M \setminus O$ is non-trapping means that $\tau_+(x,v) < +\infty$ and $-\tau_-(x,v) < +\infty$ for all $(x,v) \in S(M \setminus O)$.
	\newline
	\textbf{Antipodal set:} Given any $p\in M$, we define the antipodal set of $p$, denoted by $\mathcal A_{M,g}(p)$, as the set of all points that are antipodal to $p$, that is to say,
	$$ \mathcal A_{M,g}(p) = \{q \in M\,:\, \textrm{dist}_g(p,q)= \max_{p'\in M} \textrm{dist}_{g}(p,p') \}.$$
	Note that since the manifold $M$ is compact, for each $p \in M$, we have $\mathcal{A}_{M,g}(p) \ne \emptyset$. For further details we refer the readers to \cite{FKU24}, \cite{chen1988riemannian}, \cite{tanaka2012intersection}. We now proceed to state another main result of this article, established under a specific geometric assumption on both the observation set and the unknown region of the Riemannian manifold.  
	\begin{theorem}\label{update_with_smooth_potential}
		Let $m> 1 $. For $j=1,2$, let $(M_j,g_j)$ be a smooth closed and connected Riemannian manifold of dimension $n \geq 2$, and let $V_j \in C^{\infty}(M_j)$. Further assume zero is not an eigenvalue of the operators $\left( (-\Delta_{g_j}+m)\log{(-\Delta_{g_j}+m)} +V_j \right)$, defined in $M_j$, for $j=1,2$. Let $\mathcal{O}\subset M_1\cap M_2$ be a nonempty open connected set such that $M_j\setminus\overline{\mathcal{O}}\ne \emptyset$, $j=1,2$,   and assume that for $j=1,2$, $(\mathcal{O},g_1)=(\mathcal{O},g_2):=(\mathcal{O},g)$ and $V_1|_{\mathcal{O}}=V_2|_{\mathcal{O}}$. 
		
		\begin{itemize}
			\item [{\bf(H)}]{$(M_j\setminus O,g_j)$ is nontrapping and there exists $p\in O$ such that $\mathcal A_{M_j,g_j}(p)\subset O$.}
		\end{itemize}
		Then the equality of the Cauchy data set over $(\mathcal{O}, g)$,  $$i.e. \quad \mathcal C_{M_1,g_1,V_1}^{\mathcal{O}} = \mathcal C_{M_2,g_2,V_2}^{\mathcal{O}}\,,$$ implies the existence of a diffeomorphism $\Phi: M_1 \to M_2$ such that $\Phi^\ast g_2 = g_1$ and $V_1= V_2\circ\Phi$. Moreover, the smooth diffeomorphism $\Phi$ is equal to identity on the set $\mathcal{O}$.
	\end{theorem}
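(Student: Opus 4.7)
My plan is to reduce Theorem \ref{update_with_smooth_potential} to Theorem \ref{recovering_metric_and_potential_from_one_manifold} by first constructing, from the Cauchy data alone together with hypothesis (H), a diffeomorphism $\Phi\colon M_1\to M_2$ such that $\Phi|_{\mathcal O}=\mathrm{id}$ and $\Phi^{\ast}g_2=g_1$. Once such a $\Phi$ is in place, the two potentials $V_1$ and $V_2\circ\Phi$ live on the single manifold $M_1$, they agree on $\mathcal O$, zero remains a non-eigenvalue, and the Cauchy data sets are pulled to one another on $M_1$; Theorem \ref{recovering_metric_and_potential_from_one_manifold} then upgrades this to $V_1=V_2\circ\Phi$ throughout.

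\textbf{From Cauchy data to heat data.} The first step is to convert the equality $\mathcal C_{M_1,g_1,V_1}^{\mathcal O}=\mathcal C_{M_2,g_2,V_2}^{\mathcal O}$ into equality of the relativistic heat semigroups on $\mathcal O$. Since $m>1$, the spectrum of $-\Delta_{g_j}+m$ sits in $[m,\infty)\subset(1,\infty)$, so the Frullani identity
\[
\log\lambda=\int_0^{\infty}\frac{e^{-t}-e^{-\lambda t}}{t}\,dt,\qquad \lambda>0,
\]
represents $\mathcal L_{g_j}:=(-\Delta_{g_j}+m)\log(-\Delta_{g_j}+m)$ as an explicit integral in the heat semigroup $e^{-t(-\Delta_{g_j}+m)}$. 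Combining this representation with the Runge-type density of solutions for $\mathcal L_{g_j}+V_j$ developed in the proof of Theorem \ref{recovering_metric_and_potential_from_one_manifold}, and using that $V_1|_{\mathcal O}=V_2|_{\mathcal O}$, I would argue that the associated source-to-solution maps on $\mathcal O$ must coincide, and hence that the relativistic heat kernels $p_{g_j}(t,x,y)$ agree for all $t>0$ and all $(x,y)\in\mathcal O\times\mathcal O$.

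\textbf{Geometric rigidity under (H).} With heat-kernel data matching on $\mathcal O\times\mathcal O$, I would invoke the geodesic-flow reconstruction of \cite{FKU24}. The short-time Varadhan-type asymptotics of $p_{g_j}(t,x,y)$ encode the Riemannian distance function between points near $\mathcal O$; the non-trapping condition on $M_j\setminus\mathcal O$ ensures that every unit tangent vector exits into $\mathcal O$ in both forward and backward time, while the antipodal hypothesis $\mathcal A_{M_j,g_j}(p)\subset\mathcal O$ for some $p\in\mathcal O$ supplies a base point whose farthest points lie entirely in the observation set. These two ingredients together allow one to recover the boundary distance representation of $M_j\setminus\overline{\mathcal O}$ viewed as a compact manifold with boundary $\partial\mathcal O$, and a boundary-rigidity argument then produces the desired $\Phi\colon M_1\to M_2$ with $\Phi|_{\mathcal O}=\mathrm{id}$ and $\Phi^{\ast}g_2=g_1$.

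\textbf{Main obstacle.} Once $\Phi$ is constructed, pulling $g_2$ and $V_2$ back by $\Phi$ places us exactly in the hypotheses of Theorem \ref{recovering_metric_and_potential_from_one_manifold}, which yields $V_1=V_2\circ\Phi$ and closes the argument. The principal difficulty I expect is the extraction of heat-kernel information from the logarithmic Cauchy data: one has to show that inverting the Frullani representation in the presence of $V_j$ is lossless, and that the Runge approximation for $\mathcal L_{g_j}+V_j$ provides genuine access to all profiles $e^{-t(-\Delta_{g_j}+m)}\phi|_{\mathcal O}$ for $\phi\in C_0^{\infty}(\mathcal O)$. A secondary obstacle is that the reconstruction in \cite{FKU24} is natively phrased for the fractional relativistic operator $(-\Delta+m)^s$ via its Poisson-type representation; one must verify that the order-$2+$ logarithmic operator, built from the same semigroup, supports the same distance-recovery argument, which I expect to follow once the heat-kernel equality on $\mathcal O\times\mathcal O$ has been secured.
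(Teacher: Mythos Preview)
Your overall architecture---first build the isometry $\Phi$, then reduce to the single-manifold case---matches the paper, but your proposed route to $\Phi$ contains a real gap.

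\textbf{The heat-kernel extraction fails for general potentials.} You claim that from $\mathcal C_{M_1,g_1,V_1}^{\mathcal O}=\mathcal C_{M_2,g_2,V_2}^{\mathcal O}$ one can deduce $p_{g_1}(t,x,y)=p_{g_2}(t,x,y)$ on $(0,\infty)\times\mathcal O\times\mathcal O$. The paper carries out exactly this computation in Section~5 (proof of Theorem~\ref{thm_potential_with compact_supp}): from Proposition~\ref{key_prop_for_all_thm} one gets, on $\mathcal O$,
\[
\bigl(e^{-t\mathcal A_{g_1}}-e^{-t\mathcal A_{g_2}}\bigr)f
= e^{-t\mathcal A_{g_1}}(V_1u_1)-e^{-t\mathcal A_{g_2}}(V_2u_2),
\]
and the right-hand side vanishes \emph{only} because $V_1=V_2\in C_0^\infty(\mathcal O)$, so that $V_ju_j$ is supported in $\mathcal O$ and the inner products in \eqref{T_1_10} cancel. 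For arbitrary $V_j\in C^\infty(M_j)$ the functions $V_ju_j$ are supported on all of $M_j$, the cancellation fails, and there is no reason for the heat kernels to agree on $\mathcal O\times\mathcal O$. No Runge-type argument is developed (or needed) in the proof of Theorem~\ref{recovering_metric_and_potential_from_one_manifold}; that proof is a one-line appeal to spectral data and Theorem~\ref{Helin}.

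\textbf{What the paper does instead.} The paper avoids heat kernels entirely here. It extracts from the Cauchy data the \emph{spectral data} $\{\lambda_k,\ \psi_{m,k}^{(j)}|_{\mathcal O}\}$ of $-\Delta_{g_j}$ via Proposition~\ref{Spectural data equal prop}; this step is insensitive to the support of $V_j$. Under hypothesis~(H), Theorem~\ref{spec_diffeo} (the spectral rigidity result of \cite{FKU24}) then produces $\Phi$ with $\Phi|_{\mathcal O}=\mathrm{id}$ and $\Phi^\ast g_2=g_1$. For the potential, the paper does not reduce to Theorem~\ref{recovering_metric_and_potential_from_one_manifold} (which would only yield $V_1=(V_2\circ\Phi)\circ\Psi$ for a further isometry $\Psi$); instead it pulls back via Lemma~\ref{lem_obstruction}, matches a solution pair by the UCP of Theorem~\ref{thm_ucp}, and uses a density argument (the zero set of a nontrivial solution cannot contain an open set, again by UCP) to conclude $V_1=V_2\circ\Phi$ directly.
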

	
	\begin{remark}
		The previous theorem also extends to the case where only the potential is recovered on a given manifold, that is, when $(M_1,g_1)\equiv (M_2,g_2)\equiv (M,g)$. In this case, the isometric diffeomorphism is the identity map, so the problem reduces to recovering only the potential from the Cauchy data. It follows directly from the unique continuation property  as described in Theorem \ref{thm_ucp}. Hence, in particular we don't need any geometric assumption on the observation set and the unknown region. Therefore, for the recovery of the potential from the Cauchy data on a given Riemannian manifold, all geometric assumptions in Theorem~\ref{update_with_smooth_potential} can be omitted.
		
	\end{remark}
	
	We transform the Calder\'on problem to the Gel'fand inverse spectral problem via unique continuation principle as presented in \cite{FKU24}. Equality of Gel'fand spectra along with the nontrapping condition and vanishing of potential implies the diffeomorphism of Riemannian manifolds (see \cite[Theorem 1.11]{FKU24}). It also determines the potential up to some gauge transformation \cite{FKU24}. Below we describe variant of Gel'fand inverse spectral problem.
	
	\subsection{Variant of Gel'fand inverse spectral problem}
	
	The recovery of the geometry or the potential from the spectral data has long been of great interest in the mathematical community. In many cases, the spectral data on a manifold carries significant geometric information. In inverse spectral theory this spectral data within observation set called as Gel'fand spectral data. Few notable foundational works in this inverse spectral problem field can be found in \cite{borg1946umkehrung}, \cite{gel1951determination}, \cite{gel1954some}, \cite{levinson1949inverse}.  It starts with the recovery of potential from the knowledge of the Dirichlet
	eigenvalues and the boundary traces of the normal derivatives of the normalized
	Dirichlet eigenfunctions for the bounded Euclidean domain [see \cite{Nach96}, \cite{novikov1988multidimensional}]. The normal derivative denotes the measure of boundary flux and one can measure it from Gel'fand spectral data. It is extended for the fractional psedudifferential operator in Euclidean domain in \cite{das2025fractional}.
	\newline
	In Riemannian manifold (with or without boundary), the problem becomes the recovery of the geometric or topological information from Gel'fand spectral data. Here, the problem asks whether the Gel'fand spectral data uniquely determines both the topological and differential characteristics of the Riemannian manifold, as well as the potential, up to a gauge transformation. For the closed Riemannian manifold the Gel'fand data set is the eigenvalues and the orthonormal eigenfunctions in the observation set $\mathcal{O}$.  For local case, we refer the readers to \cite{bosi2022reconstruction}, \cite{HLOS18}, \cite{krupchyk2008inverse}. We also refer the readers to \cite{kurylev2012uniqueness}, \cite{HLOS18},  \cite{helin2020inverse}, \cite{fefferman2021reconstruction}, \cite{anderson2004boundary} for
	various works in the context of inverse  spectral problems for both parabolic and hyperbolic equations. In Euclidean domain, the equivalence of Gel'fand spectral problem for heat, wave or Schr\"odinger operator has shown in \cite{katchalov2004equivalence}. The Gelfand spectral problem also have a distinguished place Boundary control method. For additional background and details, we direct the readers to \cite{belishev1987approach}, \cite{belishev2007recent}, \cite{belishev1992reconstruction}, \cite{tataru1995unique}. 
	\newline
	We transform our Calder\'on problem into Gel'fand spectral problem. In the case where the manifolds coincide setwise (\textbf{A1}), For general smooth potential $V$, the geometric information can be readily recovered using the following theorem. 
	\begin{theorem}[\cite{HLOS18} \label{Helin}]
		Let $(M,g)$ be a smooth and compact Riemannian manifold without boundary. Let $\O\subset M$ be an open non empty set. Let $(\phi_k)_{k\in\mathbb{N}}\subset C^{\infty}(M)$ be the collection of orthonormal eigenfunctions of the operator $\Delta_g$ in $L^2(M)$. Let $(\lambda_k)_{k\in\mathbb{N}}$ be the collection of corresponding eigenvalues of $\Delta_g$. Then the spectral dat
		\[
		\Big(\O, (\lambda_k)_{k\in\mathbb{N}}, (\phi_k|_{\O})_{k\in\mathbb{N}}\Big)
		\]
		determines $(M,g)$ upto isometry.
	\end{theorem}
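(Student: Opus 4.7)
The plan is to convert the spectral data into dynamical (heat, and then wave) information restricted to $\mathcal{O}$, and then invoke a boundary-control-type reconstruction to recover $(M,g)$ up to isometry.

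First, from the eigenvalues $(\lambda_k)$ and the traces $(\phi_k|_{\mathcal{O}})$ I would assemble, for each $t>0$, the restricted heat kernel
\[
p_t(x,y) \;=\; \sum_{k=0}^{\infty} e^{-\lambda_k t}\,\phi_k(x)\,\phi_k(y), \qquad x,y\in \mathcal{O},
\]
which determines $e^{t\Delta_g}f\big|_{\mathcal{O}}$ for every $f\in C_0^{\infty}(\mathcal{O})$. By the spectral theorem the same data encodes the action of every bounded Borel function of $-\Delta_g$ on sources in $\mathcal{O}$ observed in $\mathcal{O}$; in particular one recovers the wave flow: for $f\in C_0^{\infty}(\mathcal{O})$, the restriction to $\mathcal{O}$ of the solution $u$ of $(\partial_t^2 - \Delta_g)u = 0$, $u(0,\cdot) = 0$, $\partial_t u(0,\cdot) = f$, is determined for every $t\in\mathbb{R}$.

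Next, using Tataru's sharp unique continuation theorem for the wave operator, together with an approximate-controllability argument in the spirit of Belishev's Boundary Control method, I would show that from this source-to-solution map for the wave equation with controls in $\mathcal{O}$ one can compute, for every $p\in M$, the boundary distance function $r_p:\mathcal{O}\to\mathbb{R}$, $r_p(x) = d_g(x,p)$. Concretely, for each pair $(x,r)\in\mathcal{O}\times(0,\infty)$ one tests whether a suitable family of waves controlled in $\mathcal{O}$ up to time $r$ can produce nontrivial effect at distance greater than $r$ from $x$; by finite speed of propagation and Tataru's theorem this question is answerable purely from the spectral data.

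Finally, I would invoke Kurylev's boundary distance representation: the map $p\mapsto r_p$ sends $M$ homeomorphically onto $\mathcal{R}(M) := \{r_p : p\in M\}\subset C(\mathcal{O})$ with the supremum norm, and $\mathcal{R}(M)$ carries a canonical smooth structure and Riemannian metric, pulled back from $(M,g)$, for which this map is a Riemannian isometry. Since $\mathcal{R}(M)$ is entirely determined by the spectral data, this recovers $(M,g)$ up to isometry. The main obstacle is the middle step — transplanting the Boundary Control method from manifolds with boundary to the closed-manifold setting with interior observation region $\mathcal{O}$ — since the clean localization afforded by a true boundary is unavailable, and one must instead use Tataru's theorem in full generality, combined with approximate controllability and the regularity of the eigenfunctions, to identify reachable sets of waves with geodesic neighbourhoods of $\mathcal{O}$.
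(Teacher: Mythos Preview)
The paper does not prove this theorem; it is quoted verbatim as an external result from \cite{HLOS18} and invoked as a black box (see the proof of Theorem~\ref{recovering_metric_and_potential_from_one_manifold}, which consists of a single sentence citing Proposition~\ref{Spectural data equal prop} together with this theorem). So there is no proof in the paper to compare your proposal against.

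That said, your outline is a faithful sketch of the strategy actually used in \cite{HLOS18} and its antecedents (Belishev--Kurylev, Katchalov--Kurylev--Lassas): pass from spectral data to the interior source-to-solution map for the wave equation, use finite speed of propagation together with Tataru's unique continuation to determine the family of domains of influence and hence the distance functions $r_p(\cdot)=d_g(\cdot,p)|_{\mathcal O}$, and then recover $(M,g)$ from the image $\mathcal R(M)\subset C(\overline{\mathcal O})$. The one place your write-up is a bit loose is the middle step: in the closed-manifold, interior-observation setting one does not literally run the boundary control method; rather, one shows that the set of distance functions $\{r_p:p\in M\}$ is determined by checking, via approximate controllability, which unions of metric balls centered in $\mathcal O$ have full measure. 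This is exactly the ``obstacle'' you flag, and it is handled in \cite{HLOS18} (and earlier in \cite{krupchyk2008inverse}, \cite{katchalov2004equivalence}) without needing a genuine boundary.
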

	In the general settings, where the Riemannian manifolds are setwise distinct (\textbf{A2}), for general smooth potential $V$, the differential and topological structure of the Riemannian manifold and the recovery of the metric and the potential is an easy consequence of \cite[Theorem 1.11]{FKU24}, provided we assume the nontrapping condition on the unknown part of the manifold and vanishing of potential inside the observation set. However, if we consider the potential $V$ has a compact support inside the observation set $\mathcal{O}$, then we can identify the heat kernel uniquely inside the observation set and the following theorem recovers the differential and topological character of the manifold along with the Riemannian metric.
	\begin{theorem}[\cite{FGKU25}]\label{hear_ker_diffeo}
		Let $(M_1, g_1)$ and $(M_2, g_2)$ be smooth connected
		complete Riemannian manifolds of dimension $n\geq 2$ without boundary. Let $\mathcal{O}\subset M_1\cap M_2$, be a non empty open sets.  Assume furthermore that
		$$P_{{g_1}}(t, x, y) = P_{g_2}(t, x, y),\quad\forall \, t>0,\,\,\mbox{and }x,y\in\mathcal{O}.$$
		Then there exists a diffeomorphism $\varphi : M_1 \mapsto M_2$ such that $\varphi^\ast g_2 =g_1$ on $M_1$.
	\end{theorem}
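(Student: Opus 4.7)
The plan is to reduce the hypothesis to the Gel'fand spectral data on the observation set and then invoke Theorem~\ref{Helin}. Since the result will be applied in the setting of closed Riemannian manifolds, I work under that assumption; the Laplace--Beltrami operator $-\Delta_{g_j}$ on $(M_j,g_j)$ has discrete spectrum and the heat kernel admits the spectral representation
\[
P_{g_j}(t,x,y) \;=\; \sum_{k\geq 0} e^{-\lambda_k^{(j)} t}\,\phi_k^{(j)}(x)\,\phi_k^{(j)}(y),
\]
where $\{\lambda_k^{(j)}\}_{k\geq 0}$ are the eigenvalues listed with multiplicity and $\{\phi_k^{(j)}\}$ is an $L^2(M_j,g_j)$-orthonormal basis of eigenfunctions. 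From the hypothesis $P_{g_1}(t,\cdot,\cdot)=P_{g_2}(t,\cdot,\cdot)$ on $\mathcal{O}\times\mathcal{O}$ for all $t>0$, I would extract the full spectrum by an iterated large-$t$ analysis: the slowest exponential decay rate recovers $\lambda_0^{(j)}$ together with the spectral projection
\[
\Pi_{\lambda_0}^{(j)}(x,y)=\sum_{k:\,\lambda_k^{(j)}=\lambda_0^{(j)}}\phi_k^{(j)}(x)\phi_k^{(j)}(y)
\]
on $\mathcal{O}\times\mathcal{O}$; subtracting this leading term and iterating, one peels off each distinct eigenvalue and its spectral projection restricted to $\mathcal{O}\times\mathcal{O}$, and the equality of heat kernels forces these data to coincide for $j=1,2$.

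Next, I would upgrade the equality of spectral projections on $\mathcal{O}\times\mathcal{O}$ to the Gel'fand data required by Theorem~\ref{Helin}. The crucial ingredient is the unique continuation principle for Laplace eigenfunctions on a connected Riemannian manifold: no nonzero eigenfunction can vanish on an open set, so the restriction map $\phi\mapsto\phi|_{\mathcal{O}}$ is injective on each eigenspace. Viewing $\Pi_{\lambda}^{(j)}(x,y)$ as the Schwartz kernel of a finite-rank operator on $L^2(\mathcal{O},g)$, its range is precisely $\spn\{\phi_k^{(j)}|_{\mathcal{O}}\,:\,\lambda_k^{(j)}=\lambda\}$, and its rank equals the geometric multiplicity of $\lambda$. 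Choosing compatible orthonormal bases of these ranges produces the Gel'fand spectral data $\bigl(\mathcal{O},(\lambda_k^{(j)})_k,(\phi_k^{(j)}|_{\mathcal{O}})_k\bigr)$ for $j=1,2$, which agree up to an orthogonal change of basis within each eigenspace.

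Finally, I would apply Theorem~\ref{Helin} to each $(M_j,g_j)$ to reconstruct it up to isometry from its Gel'fand data; since those data coincide, the two reconstructions yield the desired diffeomorphism $\varphi:M_1\to M_2$ with $\varphi^{\ast}g_2=g_1$. The main obstacle I anticipate is conceptual rather than computational: one must verify that the reconstruction of Helin--Lassas--Oksanen--Saksala is intrinsically a statement about the spectral projections $\Pi_{\lambda}^{(j)}|_{\mathcal{O}\times\mathcal{O}}$ and is therefore insensitive to the orthogonal freedom within each eigenspace coming from multiplicities. A secondary technical point is the clean termwise identification in the long-time expansion of the heat kernel, which requires only the discreteness of the spectrum and a standard domination argument and so causes no essential difficulty on a closed manifold.
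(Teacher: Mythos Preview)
The paper does not prove this theorem; it is quoted from \cite{FGKU25} and invoked as a black box in the proof of Theorem~\ref{thm_potential_with compact_supp}. There is therefore no in-paper argument to compare against.

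Your route---extracting the Gel'fand spectral data from the heat kernel on $\mathcal{O}\times\mathcal{O}$ and then invoking Theorem~\ref{Helin}---is a sound alternative in the closed-manifold setting to which you explicitly restrict. The obstacle you flag concerning the orthogonal freedom within eigenspaces is genuine but resolvable: if $\Pi_\lambda^{(1)}=\Pi_\lambda^{(2)}$ on $\mathcal{O}\times\mathcal{O}$ and $\{\phi_k^{(j)}|_\mathcal{O}\}$ are the restrictions of any $L^2(M_j)$-orthonormal bases of the respective $\lambda$-eigenspaces, then writing $\phi_k^{(2)}|_\mathcal{O}=\sum_l A_{kl}\,\phi_l^{(1)}|_\mathcal{O}$ and inserting this into the projection identity forces $A^{T}A=I$ by the linear independence coming from unique continuation; rotating one basis by $A^{-1}$ then matches the restrictions on $\mathcal{O}$ while preserving $L^2(M_j)$-orthonormality. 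This produces exactly the input required by Theorem~\ref{Helin}. Do note, however, that Theorem~\ref{hear_ker_diffeo} is stated for \emph{complete} (possibly non-compact) manifolds, where the spectrum need not be discrete and neither your eigenfunction expansion nor Theorem~\ref{Helin} is directly available; your argument suffices for every application in this paper but does not cover the cited statement in full generality.
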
 
	The above analysis highlights the importance of reformulating the Calder\'on problem in terms of Gel'fand spectral data. The next theorem determines the differential and topological characteristic of the Riemannian manifold and recovers the metric under some geometric condition on the manifold \cite{FKU24}.
	\begin{theorem}[\cite{FKU24}]
		\label{spec_diffeo}
		For $j=1,2$, let $(M_j,g_j)$ be a smooth closed and connected Riemannian manifold of dimension $n \geq 2$. Let $O \subset M_1 \cap M_2$ be a nonempty open set such that $M_j\setminus\overline{O}\ne \emptyset$, and assume that the condition (H) as in the Theorem \ref{t1}, is satisfied for $j=1,2$. Assume also that $g_1|_{O} = g_2|_{O}$. For $j=1,2$, suppose that there exists an $L^2(M_j)$ Schauder basis consisting of eigenfunctions $\{\psi_k^{(j)}\}_{k=0}^{\infty} \subset C^{\infty}(M_j)$ for $-\Delta_{g_j}$ on $(M_j,g_j)$ corresponding to (not necessarily distinct) eigenvalues 
		\[0 = \mu_0^{(j)} < \mu_1^{(j)} \leq \mu_2^{(j)} \leq \mu_3^{(j)} \leq \ldots\]
		such that, given any $k=0,1,2,\ldots$, there holds
		\begin{equation}\label{psi_eq} \mu_k^{(1)} = \mu_k^{(2)} \quad \text{and} \quad \psi_k^{(1)}(x) = \psi_k^{(2)}(x) \quad \forall \, x \in O. \end{equation} 
		Then, there exists a smooth diffeomorphism $\Phi: M_1 \to M_2$ that is identity on $O$ such that $g_1 = \Phi^\star g_2$ on $M_1$.
	\end{theorem}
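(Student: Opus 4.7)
The plan is to reduce the spectral hypothesis to the hypotheses of the heat-kernel identification theorem (Theorem \ref{hear_ker_diffeo}) and then to use the geometric assumption (H) to gauge-fix the resulting diffeomorphism so that it restricts to the identity on $O$. First, since each $(M_j,g_j)$ is closed, the Laplace heat kernel of $-\Delta_{g_j}$ admits the smooth spectral expansion
\begin{equation*}
P_{g_j}(t,x,y) = \sum_{k=0}^{\infty} e^{-\mu_k^{(j)} t}\,\psi_k^{(j)}(x)\,\psi_k^{(j)}(y),\qquad t>0,
\end{equation*}
converging in $C^{\infty}(M_j\times M_j)$ uniformly on $\{t\ge t_0\}$ for every $t_0>0$. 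The matching data in \eqref{psi_eq} then yield the termwise identification
\begin{equation*}
P_{g_1}(t,x,y) = P_{g_2}(t,x,y),\qquad \forall\, t>0,\ \forall\, x,y\in O,
\end{equation*}
so Theorem \ref{hear_ker_diffeo} produces a smooth diffeomorphism $\varphi:M_1\to M_2$ with $\varphi^{\ast} g_2=g_1$.

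The remaining and most delicate step is to upgrade $\varphi$ so that it restricts to the identity on $O$. A generic isometric diffeomorphism between the two manifolds need not fix $O$ pointwise, and this is where hypothesis (H) enters. Using Varadhan's short-time asymptotic
\begin{equation*}
\mathrm{dist}_{g_j}(x,y)^2 = -\lim_{t\to 0^+} 4t\,\log P_{g_j}(t,x,y),
\end{equation*}
one recovers the two-point distance $\mathrm{dist}_{g_j}$ on $O\times O$ directly from the heat-kernel (hence spectral) data. Under (H), the nontrapping condition on $M_j\setminus O$ together with the existence of $p\in O$ with $\mathcal{A}_{M_j,g_j}(p)\subset O$ guarantees that geodesics issuing from $O$ reach every point of the unknown region and return to $O$, so the extended boundary-distance function $\mathrm{dist}_{g_j}(q,\cdot)|_{O}$ for $q\in M_j\setminus O$ can be read off from data on $O$. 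Invoking the boundary-distance reconstruction used in \cite[Theorem 1.11]{FKU24} then produces a canonical model $(\widetilde M,\widetilde g)$ together with isometries $\varphi_j:(M_j,g_j)\to(\widetilde M,\widetilde g)$, for $j=1,2$, each restricting to the identity on $O$. Setting $\Phi:=\varphi_2^{-1}\circ\varphi_1:M_1\to M_2$ then yields a smooth diffeomorphism satisfying $\Phi^{\ast} g_2=g_1$ and $\Phi|_O=\mathrm{id}_O$, as required.

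The main obstacle is the last step: verifying that hypothesis (H) is strong enough to canonically reconstruct $M_j\setminus O$ from the heat-kernel data on $O$ in a way that can be glued across the two manifolds to a single diffeomorphism fixing $O$. Once nontrapping and the antipodal condition are used to ensure geodesic accessibility of the unknown region and non-degeneracy of the associated boundary-distance representation, the earlier steps---spectral-to-heat-kernel reduction, Varadhan's formula, and the heat-kernel reconstruction of \cite{FGKU25}---are essentially routine applications of established machinery, and it is the boundary-control / distance-reconstruction argument that carries the analytic content of the theorem.
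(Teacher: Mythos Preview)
The paper does not prove this statement; Theorem~\ref{spec_diffeo} is quoted verbatim from \cite{FKU24} and used as a black box (see the discussion preceding its statement and its invocation in the proof of Theorem~\ref{update_with_smooth_potential}). So there is no in-paper proof to compare against.

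Evaluating your sketch on its own merits: the reduction from matched spectral data on $O$ to matched heat kernels on $O\times O$ is correct and standard, and applying Theorem~\ref{hear_ker_diffeo} then yields \emph{some} isometry $\varphi:M_1\to M_2$. The difficulty, as you recognise, is the upgrade to $\Phi|_O=\mathrm{id}$, and this is where your argument has a genuine gap. Varadhan's asymptotic recovers $\mathrm{dist}_{g_j}(x,y)$ only for $x,y\in O$, since that is where $P_{g_j}$ is known. Your assertion that ``the extended boundary-distance function $\mathrm{dist}_{g_j}(q,\cdot)|_O$ for $q\in M_j\setminus O$ can be read off from data on $O$'' is precisely the nontrivial content of the theorem and is not a consequence of the heat-kernel equality on $O\times O$; one needs a separate mechanism (in \cite{FKU24}, essentially a hyperbolic/boundary-control argument exploiting nontrapping and the antipodal hypothesis) to propagate information into $M_j\setminus O$. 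Moreover, your final appeal to ``the boundary-distance reconstruction used in \cite[Theorem~1.11]{FKU24}'' is circular in spirit, since that theorem is the result you are asked to prove. In short, the first half of your plan is sound but does not use~(H), while the second half, where~(H) must enter, defers all of the actual work back to the source you are meant to be reproving.
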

	One of the most important intermediate step to convert the Calder\'on problem to the Gel'fand spectral problem is unique continuation principle. For real powers of Laplacian we like to refer the readers to \cite{GSU20}, \cite{FKU24}. In this article, we generalize the unique continuation principle for the logarithimic Schr\"odinger operator $(-\Delta_g+m)\log (-\Delta_g+m)$. More precisely, we establish the following unique continuation principle:
	
	\begin{theorem}
		\label{thm_ucp}
		Let $m> 1$. Let $(M,g)$ be a smooth closed and connected Riemannian manifold of dimension $n \geq 2$, and let $\mathcal{O} \subset M$ be a nonempty open set. Let  $v \in C^\infty(M)$ satisfy
		\begin{equation}\label{t2_a_1} 
			v|_\mathcal{O} =0 \quad \text{and} \quad ((-\Delta_{g}+m)\log{(-\Delta_g+m)}) v|_\mathcal{O} = 0.
		\end{equation}
		Then $v \equiv 0$ on $M$.
	\end{theorem}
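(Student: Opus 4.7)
The plan is to reduce the nonlocal unique continuation for $L = T\log T$, where $T := -\Delta_g + m$, to the classical backward unique continuation for the heat semigroup generated by $T$. Two ingredients drive the argument: the locality of $T$, which upgrades the two pointwise hypotheses on $\mathcal{O}$ into a countable family of constraints; and a Frullani-type representation of $\log T$ in terms of $e^{-tT}$, which transforms those constraints into Mellin-transform zeros of the semigroup orbit $u(t,x) := (e^{-tT}v)(x)$.

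I would first exploit locality: since $v \in C^\infty(M)$ vanishes on the open set $\mathcal{O}$, all its derivatives vanish there, so $T^k v|_\mathcal{O} \equiv 0$ for every $k \geq 0$. The same reasoning applied to $Lv$, together with $[T^k, L] = 0$ from the functional calculus, yields $L(T^k v)|_\mathcal{O} \equiv 0$ for every $k \geq 0$. Starting from the Frullani identity $\log z = \int_0^\infty (e^{-t} - e^{-tz})\, t^{-1}\, dt$ for $z > 0$, spectral calculus then gives
\[
Lv(x) = \int_0^\infty \frac{e^{-t} Tv(x) - T e^{-tT}v(x)}{t}\, dt.
\]
Evaluating at $x \in \mathcal{O}$ (where $Tv(x) = 0$), writing $Te^{-tT}v = -\partial_t u$, and integrating by parts (the boundary terms vanish because $u(\cdot,x)$ is flat to infinite order at $t = 0$ by the previous step and decays exponentially at $t = \infty$) produces $\int_0^\infty u(t,x)\, t^{-2}\, dt = 0$ for all $x \in \mathcal{O}$. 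Repeating the argument with $v$ replaced by $T^k v$, which satisfies the same pair of hypotheses, yields the countable family
\[
\int_0^\infty \frac{u(t,x)}{t^{k+2}}\, dt = 0 \qquad (k \geq 0,\ x \in \mathcal{O}),
\]
i.e., the Mellin transform $\widetilde u(s,x) := \int_0^\infty u(t,x)\, t^{s-1}\, dt$, which is entire in $s$ for each $x \in \mathcal{O}$, vanishes at every negative integer.

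The crucial step is then to promote these Mellin zeros to pointwise vanishing $u(\cdot,x) \equiv 0$ on $(0,\infty)$ for every $x \in \mathcal{O}$. Using the spectral representation $u(t,x) = \sum_k c_k e^{-t(\lambda_k+m)}\phi_k(x)$, together with the rapid decay of the coefficients $c_k$ coming from $v \in C^\infty(M)$, one controls the growth of $\widetilde u$ in the complex plane and then applies a Carlson-type uniqueness theorem for entire functions with zeros on the negative-integer lattice. Once $u$ is known to vanish on the cylinder $(0,\infty) \times \mathcal{O}$, backward unique continuation for the parabolic equation $\partial_t u + Tu = 0$ (via space-time Carleman estimates of Escauriaza--Fernández--Vessella type, or Lin's theorem) gives $u \equiv 0$ on $(0,\infty) \times M$, and letting $t \to 0^+$ yields $v \equiv 0$. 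The main obstacle is precisely this Carlson-type step: the hypothesis $v|_\mathcal{O} = 0$ alone only forces the Taylor series of $u(\cdot,x)$ at $t = 0$ to vanish, which is insufficient since $u$ is not real-analytic in $t$. It is the hypothesis $Lv|_\mathcal{O} = 0$ that supplies the additional negative-integer Mellin zeros, and carefully matching these against the growth of $\widetilde u$ afforded by the smoothness of $v$ is the technical heart of the argument.
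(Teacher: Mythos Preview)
Your reduction is correct and coincides with the paper's up through the family of Mellin-type integrals $\int_0^\infty u(t,x)\,t^{-(k+2)}\,dt=0$; the genuine gap is the ``Carlson-type'' step, which does not go through as proposed. After the substitution $t=1/s$ your conditions read $\int_0^\infty s^k\,\psi(s)\,ds=0$ for all $k\ge 0$, where $\psi(s):=u(1/s,x)$. The Mellin transform of $\psi$ (equivalently, your $\widetilde u(-\,\cdot\,,x)$) satisfies, for $\sigma>0$,
\[
|\widetilde u(-\sigma,x)|\le \int_0^1 |u(t,x)|\,t^{-\sigma-1}\,dt+\int_1^\infty|u(t,x)|\,t^{-\sigma-1}\,dt
\;\lesssim\; c^{-\sigma}\Gamma(\sigma)+C,
\]
using only $|u(t,x)|\le Ce^{-c/t}$ near $t=0$ and exponential decay at $t=\infty$. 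This is factorial growth, so Carlson's theorem (which requires exponential type, with type $<\pi$ on the imaginary axis) is inapplicable, and the rapid decay of the Fourier coefficients $c_k$ coming from $v\in C^\infty(M)$ does not repair this: spectrally $\widetilde u(s,x)=\Gamma(s)\sum_k c_k\phi_k(x)(\lambda_k+m)^{-s}$, and the unavoidable $\Gamma(s)$ factor already forces super-exponential growth.

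The paper sidesteps Carlson entirely. The missing idea is the \emph{Gaussian off-diagonal heat kernel bound}: since $v$ (and hence every $T^j v$) vanishes on $\mathcal O$, for $x$ in a compact $\omega\Subset\mathcal O$ one has
\[
(e^{-tT}Tv)(x)=\int_{M\setminus\mathcal O}\widetilde P(t,x,y)\,(Tv)(y)\,dV_g(y),
\qquad |\widetilde P(t,x,y)|\le Ct^{-n/2}e^{-c\,d_g(x,y)^2/t},
\]
so that $|(e^{-tT}Tv)(x)|\le Ce^{-c_1/t}$ for $t\in(0,1)$. After $t\mapsto 1/s$ this gives $|\phi(s)|\le Ce^{-c_1 s}$ for \emph{all} $s>0$ (large-$t$ decay handles $s\le 1$). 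Now one takes the \emph{Fourier--Laplace} transform $F(\xi)=\int_0^\infty \phi(s)e^{-i\xi s}\,ds$, which extends holomorphically to the strip $\operatorname{Im}\xi>-c_1$; the moment conditions become $F^{(k)}(0)=0$ for all $k\ge 0$, and the identity theorem yields $F\equiv 0$, hence $\phi\equiv 0$. This replaces your Carlson step by an elementary analytic-continuation argument, and the crucial input you are missing is precisely the $e^{-c/t}$ estimate coming from spatial separation of $\omega$ and $\operatorname{supp}(Tv)\subset M\setminus\mathcal O$. Your concluding parabolic unique-continuation step is fine (indeed more carefully stated than the paper's own ending).
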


	\subsection{Organization of the paper}
	
	We divide the article in four sections. In the first section, we address the well-posedness and regularity of solutions to the nonlocal logarithmic Schr\"odinger equation with a non local operator $(-\Delta_g+m)\log(-\Delta_g+m)$ with $m> 1$ and a source term on the right-hand side. All the classical definitions, results, and the functional calculus approach presented in the first section can be found in \cite{Stri83}, \cite{str83}, and \cite{Tay81}.
	
	\vspace{.1cm}
	
	The second section is dedicated to the proof of the unique continuation principle, as stated in Theorem \ref{thm_ucp}. It is an important theorem joining the bridge between the Calder\'on problem and the Gel'fand problem.
	
	\vspace{.1cm}
	
	The third section explores the transformation of the Calderón problem into an inverse spectral problem. To carry out the transformation using the unique continuation principle, it is necessary to study the combined action of the heat kernel and the nonlocal operator Laplacian compose with it's logarithm on the solution of the nonlocal logarithimic Schr\"odinger equation within the observation set. In our case, it suffices to examine the composite action on solutions of the nonlocal logarithmic Schr\"odinger equation whose source term is smooth and compactly supported within the observation set $\mathcal{O}$.  Here, we also remind the readers that in our case, the non local operator is $(-\Delta_{g_j}+m)\log(-\Delta_{g_j}+m)$. This composite action is described in details in the Proposition \ref{key_prop_for_all_thm}. It allows us to transform the problem into Gel'fand spectral problem as described in Proposition \ref{Spectural data equal prop}.
	
	\vspace{.1cm}

	In the final two sections, we devote our attention to establish Theorem \ref{thm_potential_with compact_supp}, Theorem \ref{recovering_metric_and_potential_from_one_manifold} and Theorem \ref{update_with_smooth_potential}. The Gel'fand inverse spectral data, combined with Theorem \ref{hear_ker_diffeo}, enables the derivation of Theorem \ref{thm_potential_with compact_supp}, while the same combined with Theorem \ref{Helin}, directly yields Theorem \ref{recovering_metric_and_potential_from_one_manifold}. Furthermore, the geometric properties of the Riemannian manifold and the recovery of the metric in the case of Theorem \ref{update_with_smooth_potential} follow directly from \cite[Theorem 1.11]{FKU24}. Therefore, in the case of Theorem \ref{recovering_metric_and_potential_from_one_manifold} and Theorem \ref{update_with_smooth_potential}, our primary focus is on the recovery of the potential for the non local operator $(-\Delta_g+m)\log{(-\Delta_g+m)}$. Most of the ideas used here are taken from \cite{FKU24}.

	\vspace{.2cm}
	
	\textbf{Notation:}
	\begin{itemize}
		\item [$\bullet$] $\A_g:= -\Delta_g+m, \ m> 1$.\\
		\item [$\bullet$] $\L_g:= \A_g\log{\A_g}$ 
	\end{itemize}
	
	\section{Preliminaries}\label{sec_pre}
	
	Let $(M, g)$ be a smooth, closed and connected Riemannian manifold of dimension $n \geq 2$. Consider the positive Laplace--Beltrami operator $-\Delta_g$ on $M$, which is self-adjoint on $L^2(M)$ with domain $\mathcal{D}(-\Delta_g) = H^2(M)$, the standard Sobolev space on $M$. Denote the distinct eigenvalues of $-\Delta_g$ by
	\[
	0 = \lambda_0 < \lambda_1 < \lambda_2 < \ldots+\infty,
	\]
	and let $d_k$ denote the multiplicity of the eigenvalue $\lambda_k$ for $k = 0, 1, 2, \ldots$. For each $k$, let $\{ \phi_{k,\ell} \}_{\ell=1}^{d_k}$ be an $L^2(M)$-orthonormal basis for the eigenspace $\operatorname{Ker}(-\Delta_g - \lambda_k)$ corresponding to $\lambda_k$. The collection $\{ \phi_{k,\ell} \}_{k \geq 0,\, 1 \leq \ell \leq d_k}$ forms a complete orthonormal basis for $L^2(M)$. For each $k$, define the orthogonal projection operator $\pi_k : L^2(M) \to \operatorname{Ker}(-\Delta_g - \lambda_k)$ by
	\begin{equation}
		\label{proj_op}
		\pi_k f = \sum_{\ell=1}^{d_k} \langle f, \phi_{k,\ell} \rangle_{L^2(M)} \, \phi_{k,\ell}, \quad f \in L^2(M),
	\end{equation}
	where $\langle \cdot, \cdot \rangle_{L^2(M)}$ denotes the $L^2$ inner product on $M$.
	
	Motivated by the fact that for all $s > 0$, $\lim_{\lambda \to \infty} \frac{\log \lambda}{\lambda} = 0$, consider the (unbounded) operator $\mathcal{L}_g := (-\Delta_g + m)\log((-\Delta_g)+m)$
	on $L^2(M)$. The purpose of perturbing with $m\geq 1$ is to ensure that the operator is elliptic. By spectral theory, for $u \in L^2(M)$, we have the expansion
	\begin{align}\label{Laplace_log_laplace_spec_def}
		\mathcal{L}_g u = \sum_{k=0}^\infty (\lambda_k + m) \log (\lambda_k + m)\, \pi_k u,
	\end{align}
	where $\{ \lambda_k \}_{k=0}^\infty$ are the eigenvalues of $-\Delta_g$, $\{ \phi_{k, \ell} \}$ is an orthonormal basis of corresponding eigenfunctions, and $\pi_k u =\sum_{\ell=1}^{d_k} \langle u, \phi_{k,\ell} \rangle_{L^2(M)} \, \phi_{k,\ell},$ is the orthogonal projection onto the eigenspace associated to $\lambda_k$.
	The domain of $\mathcal{L}_g$ is
	\begin{align}\label{dom_of_lap_log_lap}
		D(\mathcal{L}_g) = \left\{ u \in L^2(M) \,\middle|\,
		\sum_{k=0}^{\infty}  \left[(\lambda_k + m)\log(\lambda_k + m)\right]^2 
		| \pi_{k }u|^2 < \infty \right\}:=\mathbb{H}(M).
	\end{align}
	Since for any $s>0$,\,\,$|\log(\lambda_k + m)|^2 <|(\lambda_k + m)|^{2s}$, it follows that  $H^{2+s}(M) \subset \mathbb{H}(M)$. In this context, we define the log-Laplacian operator \(\log(-\Delta_g + m \mathbb{I})\) by
	\begin{align}\label{log_laplace_spec_def}
		\log (-\Delta_g + m\mathbb{I})\,u= \sum_{k=0}^\infty\log (\lambda_k + m)\, \pi_k u,
	\end{align}
	with domain 
	\begin{align}\label{dom_of_log_lap}
		D(\log(-\Delta_g + m\mathbb{I})) = \left\{ u \in L^2(M) \,\middle|\,
		\sum_{k=0}^{\infty}   \log(\lambda_k + m)^2 
		| \pi_{k }u|^2 < \infty \right\},
	\end{align}
	thanks to the relation \( |\log(\lambda_k + m)|^2 < |(\lambda_k + m)|^{2s} \), we also have \( H^{2s}(M) \subset D(\log(-\Delta_g + m \mathbb{I})) \) for any \( s > 0 \).
	\medskip

	Let \(f \in L^2(M)\) and consider the direct problem
	\begin{equation}\label{equation_u}
		\mathcal{L}_g u = f \quad \text{in } M.
	\end{equation}
	Let us first observe that \(0\) is not an eigenvalue of the operator \(\mathcal{L}_g\) defined in \eqref{Laplace_log_laplace_spec_def} for \(m > 1\). Suppose, for contradiction, that \(0\) is an eigenvalue of \(\mathcal{L}_g\). Then there exists a nonzero function \(u \in \mathbb{H}(M)\) such that
	\[
	\langle u,\, \mathcal{L}_g u \rangle = 0.
	\]
	Expanding \(u\) in terms of the orthonormal \(L^2(M)\) eigenbasis \(\{\phi_k\}_{k=0}^\infty\), i.e., \(u = \sum_{k=0}^\infty \langle \phi_k, u \rangle \phi_k\), we obtain
	\[
	\left\langle \sum_{k=0}^\infty \langle \phi_k, u \rangle \phi_k,\ \sum_{k=0}^\infty (\lambda_k + m)\ln(\lambda_k + m)\, \langle \phi_k, u \rangle \phi_k \right\rangle = 0,
	\]
	which gives
	\[
	\sum_{k=0}^\infty |\langle \phi_k, u \rangle|^2\, (\lambda_k + m)\ln(\lambda_k + m) = 0.
	\]
	Since each term in the sum is non-negative (because \(\lambda_k \geq 0\) for all \(k\) and \(m > 1\)), it follows that
	\[
	\langle \phi_k, u \rangle = 0 \quad \text{for all } k \geq 0,
	\]
	which implies \(u = 0\), contradicting our assumption that \(u\) is nonzero. Therefore, the (linear) operator \(\mathcal{L}_g\) is injective on its domain of definition.
	
	Next, we define its inverse on \(L^2(M)\) spectrally as
	\[
	\left( \mathcal{L}_g \right)^{-1} f = \sum_{k=0}^\infty \frac{1}{(\lambda_k + m)\ln(\lambda_k + m)} \langle \phi_k, f \rangle \phi_k, \quad f \in L^2(M).
	\]
	It is immediate that
	\[
	\mathcal{L}_g \circ \left( \mathcal{L}_g \right)^{-1} f = \sum_{k=0}^\infty \langle \phi_k, f \rangle \phi_k = f, \quad f \in L^2(M),
	\]
	so by setting
	\[
	u = \left( \mathcal{L}_g \right)^{-1} f
	\]
	we obtain the unique solution of \eqref{equation_u} in \(\mathbb{H}(M)\subset L^2(M)\), for any \(f \in L^2(M)\).

	\textbf{Direct Problem:} Let \(\O\) be a non-empty open subset of \(M\), and let \(f \in C^\infty(\O)\). Consider the direct problem associated with the Schrödinger operator
	\begin{equation} \label{Schrodinger_operator_for_IP}
		\mathcal{L}_g u + V u = f \quad \text{in } M,
	\end{equation}
	where \(\mathcal{L}_g\) is the operator defined in \eqref{Laplace_log_laplace_spec_def}, and \(V\) is a smooth potential function on \(M\). Before proceeding further, define the following Hilbert space:
	\begin{align}\label{new_hilbert_space}
		\mathcal{H}(M) := \left\{ u \in L^2(M) \,\middle|\, \sum_{k=0}^\infty \left[(\lambda_k + m)\log(\lambda_k + m)\right] |\pi_k u|^2 < \infty \right\}.
	\end{align}
	Observe that \(\mathcal{H}\) is a Hilbert space with respect to the graph norm $\norm{u}^2_{\mathcal{H}(M)}:=\sum_{k=0}^\infty (\lambda_k + m)\log(\lambda_k + m) \,|\pi_ku |^2$. For given that \(m > 1\), note that $(\lambda_k + m)\leq(\lambda_k + m)\log(\lambda_k + m)\leq(\lambda_k + m)^2 \leq C\,(\lambda_k + m)^2 [\log(\lambda_k + m)]^2$, for some $C>0$
	which implies that
	\begin{align}\label{relation_btw_spaces}
		H^{2+2s}(M)\subset\mathbb{H}(M) \subset H^2(M)\subset \mathcal{H}(M)\subset H^1(M), \quad \text{for every $s>0$}. 
	\end{align}
	
	\begin{proposition}\label{uniq_sol}
		Let $V \in C^{\infty}(M)$ and let $m > 1$.  Let $0$ is not an eigenvalue of $(-\Delta_g+m)\log(-\Delta_g+m)+V$. Then for any $f \in C^\infty(M)$, the equation \eqref{Schrodinger_operator_for_IP} admits a unique solution $u^f \in C^\infty(M)$.
	\end{proposition}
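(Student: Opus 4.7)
The plan is to establish existence and uniqueness of a solution in $L^2(M)$ via the Fredholm alternative, and then to bootstrap its regularity to $C^\infty(M)$ using the spectral description of Sobolev spaces on $(M,g)$. Since $m > 1$, every eigenvalue $(\lambda_k+m)\log(\lambda_k+m)$ of $\mathcal{L}_g$ lies in $[m\log m,\,+\infty)$ and tends to $+\infty$; hence the operator $\mathcal{L}_g^{-1}:L^2(M)\to L^2(M)$ already constructed in Section~\ref{sec_pre} is a compact, positive, self-adjoint operator whose range is precisely $\mathbb{H}(M)\subset H^2(M)$.

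First I would recast \eqref{Schrodinger_operator_for_IP} as the $L^2(M)$ identity
\begin{equation*}
  (I + \mathcal{L}_g^{-1} V)\, u \;=\; \mathcal{L}_g^{-1} f.
\end{equation*}
Because $M$ is closed and $V\in C^\infty(M)$, multiplication by $V$ is bounded on $L^2(M)$, hence $\mathcal{L}_g^{-1}V$ is compact on $L^2(M)$ as the product of a bounded and a compact operator. By the Fredholm alternative, $I+\mathcal{L}_g^{-1}V$ is boundedly invertible iff it is injective. If $u\in L^2(M)$ satisfies $u+\mathcal{L}_g^{-1}Vu=0$, then $u$ lies in the range of $\mathcal{L}_g^{-1}$ and is therefore automatically in $\mathbb{H}(M)$, and $\mathcal{L}_g u+Vu=0$; the hypothesis that $0$ is not an eigenvalue of $\mathcal{L}_g+V$ then forces $u=0$. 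Consequently, for every $f\in C^\infty(M)\subset L^2(M)$ there is a unique $u\in\mathbb{H}(M)\subset H^2(M)$ solving \eqref{Schrodinger_operator_for_IP}.

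To promote this $H^2$-solution to $C^\infty(M)$, I would bootstrap in the Sobolev scale. The decisive spectral estimate — valid precisely because $m>1$ — is that $(\lambda_k+m)\log(\lambda_k+m)\ge C^{-1}(\log m)(1+\lambda_k)$ for some $C>0$; hence, whenever $\mathcal{L}_g u\in H^s(M)$, Parseval's identity yields
\begin{equation*}
  \sum_{k}(1+\lambda_k)^{s+2}\,|\pi_k u|^2 \;\lesssim\; \sum_{k}(1+\lambda_k)^{s}\bigl[(\lambda_k+m)\log(\lambda_k+m)\bigr]^2\,|\pi_k u|^2 \;<\; \infty,
\end{equation*}
so $u\in H^{s+2}(M)$. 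Starting from the base case $u\in H^2(M)$ and iterating — at each stage multiplication by $V\in C^\infty(M)$ preserves $H^{2k}(M)$, so $\mathcal{L}_g u=f-Vu\in H^{2k}(M)$ and hence $u\in H^{2k+2}(M)$ — one concludes $u\in\bigcap_{s\ge 0}H^s(M)=C^\infty(M)$ by Sobolev embedding.

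The main obstacle to watch is the weakness of the logarithmic factor in the symbol of $\mathcal{L}_g$: $\mathcal{L}_g^{-1}$ is smoothing of order $2$ only modulo a logarithmic loss, which could raise concern that a single inversion does not land in $H^{s+2}$. However, the displayed spectral inequality shows that the quadratic $(\lambda_k+m)$ factor alone already supplies two full Sobolev derivatives per iteration, so the bootstrap closes; the remaining ingredients — self-adjointness of $\mathcal{L}_g$, compactness of $\mathcal{L}_g^{-1}$, and injectivity from the zero-eigenvalue hypothesis — are straightforward consequences of the spectral setup in Section~\ref{sec_pre}.
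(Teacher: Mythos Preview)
Your proof is correct and reaches the same conclusion, but the route differs from the paper's in two respects.

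For existence and uniqueness, the paper introduces the intermediate Hilbert space $\mathcal{H}(M)$ (the ``square-root'' domain of $\mathcal{L}_g$), shows via Lax--Milgram that $\mathcal{L}_g+1:\mathcal{H}(M)\to\mathcal{H}'(M)$ is invertible, and then argues that multiplication by $V-1$ is compact from $\mathcal{H}(M)$ to $\mathcal{H}'(M)$, so $\mathcal{L}_g+V$ is Fredholm of index zero. You instead work directly in $L^2(M)$: invert $\mathcal{L}_g$ spectrally, note that $\mathcal{L}_g^{-1}$ is compact, and apply the Fredholm alternative to $I+\mathcal{L}_g^{-1}V$. Your path is shorter and avoids the auxiliary space $\mathcal{H}(M)$ altogether; the paper's variational setup, on the other hand, would generalize more readily to situations where an explicit inverse of the leading part is unavailable.

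For regularity, the paper differentiates the equation in local coordinates, writing $\mathcal{L}_g(\partial_{x_i}u)=\partial_{x_i}f-\partial_{x_i}(Vu)$ and iterating. Your spectral bootstrap, based on the elementary bound $(\lambda_k+m)\log(\lambda_k+m)\ge(\log m)(1+\lambda_k)$, gains two full Sobolev orders per step and sidesteps entirely the question of commuting the nonlocal operator $\mathcal{L}_g$ with coordinate vector fields; this is both cleaner and more robust.
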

	
	\begin{proof}
		We start with demonstrating that the operator $ (-\Delta_g+m)\log(-\Delta_g+m)+ 1 : \mathcal{H}(M) \to \mathcal{H}'(M)$ is invertible, where $\mathcal{H}'(M)$ is the dual of $\mathcal{H}(M)$. We define the bilinear form \(B(u,v) \) by
		\begin{align}\label{bilinear_form}
			B : \mathcal{H}(M)& \times \mathcal{H}(M) \longrightarrow \mathbb{C},\notag \\
			(u, v) &\longmapsto \langle \mathcal{L}_g u, v \rangle_{L^2(M)} + \langle  u, v \rangle_{L^2(M)}.
		\end{align}
		Applying the Cauchy–Schwarz inequality to $B(\cdot,\cdot)$ yields
		\begin{align}
			|B(u, v)| 
			&\leq \|u\|_{\mathcal{H}(M)} \|v\|_{\mathcal{H}(M)} +  \|u\|_{L^2(M)} \|v\|_{L^2(M)}\notag \\
			&\leq 2 \|u\|_{\mathcal{H}(M)} \|v\|_{\mathcal{H}(M)},
		\end{align}
		which implies that the bilinear form $B(\cdot,\cdot)$ is continuous in $\mathcal{H}(M)$.Moreover, the bilinear form is coercive on \(\mathcal{H}(M)\) since
		\begin{align}\label{coercive}
			B(u,u)&=\sum_{k=0}^\infty \left[(\lambda_k + m)\log(\lambda_k + m)\right] \,|\pi_ku|^2 + \int_M |u|^2 \,dV_g\notag\\
			&\geq \sum_{k=0}^\infty \left[(\lambda_k + m)\log(\lambda_k + m)\right] \,|\pi_ku|^2 +  \norm{u}_{L^2(M)}, \notag\\
			&=\sum_{k=0}^\infty \left[(\lambda_k + m)\log(\lambda_k + m)+1\right] \,|\pi_ku|^2\geq C\norm{u}^2_{\mathcal{H}(M)}.
		\end{align}
		By the Lax–Milgram theorem, equation the operator $ (-\Delta_g+m)\log(-\Delta_g+m)+ 1 : \mathcal{H}(M) \to \mathcal{H}'(M)$ is invertible. The operator multiplication by $(V - 1) \in C^\infty(M)$ is compact from $\mathcal{H}(M)$ to $\mathcal{H}'(M)$, see \cite[Theorems 2.3.6 and 2.3.1]{agranovich2015sobolev}. Hence, the operator $ (-\Delta_g+m)\log(-\Delta_g+m)+ V: \mathcal{H}(M)$ to $\mathcal{H}'(M)$ is Fredholm of index zero. The condition $0$ is not an eigenvalue of $ (-\Delta_g+m)\log(-\Delta_g+m)+ V$ yields a unique solution to \eqref{Schrodinger_operator_for_IP} in $\mathcal{H}(M) \subset H^1(M)$ for any \( f \in C^\infty(M) \).
		
		Next, we claim that \(u \in C^\infty(M)\). To see this, choose a local coordinate chart \((U_p; x_1, x_2, \ldots, x_n)\) around \(p \in M\). Then for any $i=1,2,...,n$,
		\begin{align}\label{abc}
			\mathcal{L}_g(\partial_{x_i}u) = \partial_{x_i}\left(\mathcal{L}_g u\right) = \partial_{x_i}f - \partial_{x_i}(V u).
		\end{align}
		Since \(M\) is compact, \(V, f \in C^\infty(M)\), and \(u \in H^1(M)\), it follows that \(\partial_{x_i}u \in H^1(M)\), i.e., \(u \in H^2(M)\). By induction, \(u \in H^s(M)\) for every \(s \in \mathbb{N}\). Thus,
		\[
		u \in \bigcap_{s \in \mathbb{N}} H^s(M) = C^\infty(M).
		\]
	\end{proof}
	
	\begin{remark}
		In the above proposition $m> 1$ plays an important role. It allows us to apply Lax-Milgram lemma as $\log{(\lambda_k+m)}> 0$ for all $k\in\mathbb{N}$.
	\end{remark}
	\subsection*{Equivalent definition of \texorpdfstring{\eqref{log_laplace_spec_def}}{half_power_spec_def} using functional calculus and semigroup}
	
	Let \((e^{-t(-\Delta_g + m)})_{t \geq 0}\) be the strongly continuous heat semigroup on \(L^2(M)\) generated by the operator \((-\Delta_g + m)\), whose infinitesimal generator has domain \(D((-\Delta_g + m)) = H^2(M)\). For any \(v \in L^2(M)\), the action of the heat semigroup is given by the heat kernel \(\tilde{P}(t, x, y)\):
	\begin{align}
		e^{-t\mathcal{L}_g} \, v(x) = \int_M \tilde{P}(t, x, y)\, v(y)\, dV_g(y).
	\end{align}
	The heat kernel \(\tilde{P}(t, x, y)\) admits the following spectral expansion:
	\begin{equation}\label{rela_betw_ker}
		\tilde{P}_g(t, x, y) = \sum_{k=0}^{\infty} e^{-t(\lambda_k + m)} \phi_k(x)\, \phi_k(y) = e^{-mt} P(t, x, y),
	\end{equation}
	where \(P(t, x, y) \in C^\infty((0, \infty) \times M \times M)\) is the heat kernel associated to the semigroup \((e^{t\Delta_g})_{t \geq 0}\). On any closed Riemannian manifold, the following pointwise upper bound for   heat kernel—due to Grigor'yan \cite{grigor97} holds:

	\begin{theorem}
		[Grigor’yan, \cite{grigor97}]\label{het_ker_est}
		Let \( x, y \) be two points on an arbitrary smooth, connected and compact Riemannian manifold \( M \), and let \( t \in (0,\infty) \). Then
		\begin{equation}
			\left| P_g(t,x,y) \right| \leq \frac{C}{t^{n/2}} \, e^{ -\frac{c\,d_g^2(x, y)}{t} },
		\end{equation}
		where \( C > 0 \), \( c > 0 \), and \( d_g(x, y) \) denotes the Riemannian distance between \( x \) and \( y \).
	\end{theorem}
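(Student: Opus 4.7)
The plan is to establish this Gaussian upper bound by combining two complementary estimates: a short-time on-diagonal bound $P_g(t,x,x) \leq C t^{-n/2}$, and a weighted $L^2$ energy (Davies--Gaffney) inequality that supplies the off-diagonal Gaussian decay factor. These two ingredients are then glued together through the semigroup identity
\[
P_g(2t,x,y) = \int_M P_g(t,x,z)\, P_g(t,z,y)\, dV_g(z),
\]
which converts a bilinear weighted $L^2$ bound into the desired pointwise statement.

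First I would establish the on-diagonal bound. For $t \in (0,1]$, I localize in a finite atlas of coordinate charts in which the metric $g$ is uniformly equivalent to the Euclidean one, and apply a local Nash inequality (equivalently, Moser iteration on the parabolic equation $\partial_s u = \Delta_g u$ with $u(s,\cdot) = P_g(t+s,\cdot,y)$) to obtain $\|P_g(t,\cdot,y)\|_{L^\infty(M)} \leq C t^{-n/2}$; symmetry of the heat kernel together with $P_g(2t,x,x) = \int_M P_g(t,x,z)^2\, dV_g(z)$ then yields the on-diagonal bound. For $t \geq 1$, the spectral expansion and $\lambda_1 > 0$ give exponential decay of $P_g(t,x,y) - \mathrm{vol}_g(M)^{-1}$ and hence a uniform pointwise bound that is absorbed into $C t^{-n/2}$ after adjusting $C$.

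For the off-diagonal Gaussian factor I would invoke the Davies--Gaffney integrated maximum principle. For any Lipschitz $\psi$ on $M$ with $|\nabla \psi|_g \leq 1$, any $\alpha > 0$, and any solution $u$ of $\partial_t u = \Delta_g u$ with initial datum $f \in L^2(M)$, differentiating $J(t) := \int_M u(t,x)^2 e^{2\alpha \psi(x)}\, dV_g(x)$ in $t$ and integrating by parts produces $J'(t) \leq 2\alpha^2 J(t)$, hence $J(t) \leq e^{2\alpha^2 t} J(0)$. Taking $\psi(\cdot) := d_g(\cdot, A)$ for balls $A \ni x$ and $B \ni y$, applying the estimate with $f$ supported in $B$, and optimizing in $\alpha$ yields the bilinear bound
\[
\Bigl| \int_A \int_B P_g(t,x',y') f_1(x') f_2(y')\, dV_g\, dV_g \Bigr| \leq e^{-d_g(A,B)^2/(4t)} \|f_1\|_{L^2} \|f_2\|_{L^2}.
\]
Combining this with the on-diagonal estimate via Davies' trick --- splitting $e^{t \Delta_g} = e^{(t/2)\Delta_g} \circ e^{(t/2)\Delta_g}$, conjugating each factor by $e^{\alpha \psi}$, and using Cauchy--Schwarz --- upgrades the bilinear statement to the pointwise Gaussian bound with any constant $c < 1/4$ in the exponent.

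The main obstacle is the clean passage from the weighted $L^2$ estimate to a pointwise bound while preserving both the sharp polynomial prefactor $t^{-n/2}$ and a positive Gaussian constant; this requires a careful choice of the weight $\psi$ localized near $x$ and $y$ and a joint optimization of $\alpha$ and of the localization scale. A secondary technical point is justifying the integration by parts with the unbounded weight $e^{2\alpha \psi}$ on a closed manifold, which is handled by approximating $\psi$ by bounded Lipschitz cut-offs and passing to the limit using dominated convergence together with the absolute convergence of the eigenfunction expansion for $P_g$.
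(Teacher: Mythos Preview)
The paper does not prove this statement at all: it is quoted as a known result due to Grigor'yan with a citation to \cite{grigor97}, and is used later only as a black box (in Lemma~\ref{esti_1} and in the proof of Theorem~\ref{thm_ucp}). There is therefore no ``paper's own proof'' to compare your proposal against.

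That said, your outline is the standard Davies method and is essentially correct as a route to the Gaussian upper bound on a closed manifold: the on-diagonal bound $P_g(t,x,x)\le Ct^{-n/2}$ for small $t$ via a local Nash/Sobolev inequality (with the large-$t$ regime handled by the spectral gap), combined with the Davies--Gaffney weighted $L^2$ estimate and the semigroup factorisation, does yield the pointwise bound with some $c\in(0,\tfrac14)$. One small comment: on a closed manifold $\psi$ is automatically bounded, so the approximation of the weight $e^{2\alpha\psi}$ by bounded cut-offs is unnecessary; the integration by parts is immediate. Your remark about the ``main obstacle'' (passing from the bilinear/weighted $L^2$ bound to a pointwise one while keeping $t^{-n/2}$) is the genuine technical step, and the cleanest way to handle it is to use the on-diagonal bound to control $\|e^{\alpha\psi}P_g(t/2,x,\cdot)\|_{L^2}$ directly rather than going through set-to-set estimates on balls.
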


	\begin{lemma}\label{esti_1}
		For the semigroup $e^{-t(-\Delta_g + m\mathbb{I})}, \,t>0$ we have the following estimate for any $v\in L^\infty(M)$
		\begin{align}
			|e^{-t(-\Delta_g + m\mathbb{I})}| \leq e^{-mt}\norm{H_M}_{L^1(M)} \norm{v}_{L^\infty(M)},
		\end{align}
		where, $H_M(z)=e^{-cz}$\footnote{Here \( z \) stands for the mapping \( z: M \times M \times (0,\infty) \to [0,\infty) \) given by \( (x,y,t) \mapsto \frac{d_g(x,y)}{\sqrt{t}} \), where \( d_g \) is the Riemannian metric on \( (M,g) \).}, and $m\neq0$.
	\end{lemma}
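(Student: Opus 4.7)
The plan is to represent the semigroup action via its heat kernel and then to invoke Grigor'yan's Gaussian upper bound, Theorem \ref{het_ker_est}, combined with the spectral identity \eqref{rela_betw_ker}. Concretely, for $v\in L^\infty(M)$ and any $x\in M$, I would first write
\begin{equation*}
    e^{-t(-\Delta_g+m\mathbb{I})}v(x) \;=\; \int_M \tilde{P}_g(t,x,y)\,v(y)\,dV_g(y) \;=\; e^{-mt}\int_M P_g(t,x,y)\,v(y)\,dV_g(y),
\end{equation*}
where the first equality is the definition of the semigroup via its kernel and the second uses $\tilde{P}_g(t,x,y)=e^{-mt}P_g(t,x,y)$ from \eqref{rela_betw_ker}. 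This isolates the exponential decay factor $e^{-mt}$, which must appear in the final bound.

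Next I would take absolute values inside the integral, pull out $\|v\|_{L^\infty(M)}$, and apply Grigor'yan's pointwise bound $|P_g(t,x,y)|\leq C\,t^{-n/2}\,e^{-c\,d_g^2(x,y)/t}$. This gives
\begin{equation*}
    \bigl|e^{-t(-\Delta_g+m\mathbb{I})}v(x)\bigr| \;\leq\; e^{-mt}\,\|v\|_{L^\infty(M)}\int_M \frac{C}{t^{n/2}}\,e^{-c\,d_g^2(x,y)/t}\,dV_g(y).
\end{equation*}
The remaining integral is, by the very definition of $H_M$ and the normalized distance variable $z(x,y,t)=d_g(x,y)/\sqrt{t}$ from the footnote, an expression of the form $\|H_M(x,\cdot,t)\|_{L^1(M)}$; one absorbs the factor $Ct^{-n/2}$ into $H_M$ (or into the constant $c$ by elementary inequalities of the form $e^{-c\,d_g^2/t}\leq e^{-c'\,d_g/\sqrt{t}}$ valid on a compact manifold where $d_g$ is bounded). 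Taking the supremum over $x$ then yields the claimed estimate.

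There is no genuine obstacle here: the statement is a routine consequence of Grigor'yan's heat kernel estimate once the spectral factorization $\tilde{P}_g=e^{-mt}P_g$ is in place. The only mild care needed is notational: one must clarify that $\|H_M\|_{L^1(M)}$ refers to integration in the $y$-variable with $x$ and $t$ fixed, and that the prefactor $Ct^{-n/2}$ arising from Grigor'yan's bound has been folded into $H_M$ (equivalently, into the constant $c$) using the boundedness of $d_g$ on the compact manifold $M$. If instead one wants the statement uniformly in $t\in(0,\infty)$, the standard Gaussian integral estimate $\int_M t^{-n/2}e^{-c\,d_g^2(x,y)/t}dV_g(y)\leq C'$ (uniform in $x$ and $t$) finishes the argument.
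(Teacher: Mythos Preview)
Your proposal is correct and follows essentially the same route as the paper: represent the semigroup via its kernel, factor out $e^{-mt}$ using \eqref{rela_betw_ker}, apply Grigor'yan's Gaussian bound from Theorem~\ref{het_ker_est}, pull out $\|v\|_{L^\infty(M)}$, and identify the remaining integral with $\|H_M\|_{L^1(M)}$. If anything, your discussion of how the prefactor $Ct^{-n/2}$ and the squared-distance exponent are reconciled with the definition $H_M(z)=e^{-cz}$ is more explicit than the paper's, which simply invokes ``self-similarity'' at that step.
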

	\begin{proof}
		Let \( v \in L^\infty(M) \). Using relation~\eqref{rela_betw_ker}, Theorem~\ref{het_ker_est}, and the self-similarity of \( H_M\left( \frac{d(x,y)}{t^{1/2}} \right) \), we obtain:
		\begin{align}
			|e^{-t(-\Delta_g + m\mathbb{I})} v(x)| 
			&\leq e^{-m t} \int_M \frac{1}{t^{\frac{n}{2}}} H_M\left( \frac{d(x,y)}{t^{1/2}} \right) |v(y)|\, d_g V(y) \notag \\
			&\leq e^{-m t} \|v\|_{L^\infty(M)} \int_M \frac{1}{t^{\frac{n}{2}}} H_M\left( \frac{d(x,y)}{t^{1/2}} \right) d_g V(y) \notag \\
			&\leq e^{-m t} \|v\|_{L^\infty(M)} \|H_M\|_{L^1(M)}.
		\end{align}
		This concludes the proof.
	\end{proof}

	This lemma~\ref{esti_1} guarantees the following mapping property of the heat semigroup:
	\begin{align}\label{mapping_prop_1}
		\| e^{-t(-\Delta_g + m\mathbb{I})} \|_{L^\infty(M) \to L^\infty(M)} \leq e^{-mt} \leq 1.
	\end{align}
	Recall the following identity,\footnote{Let \( I(\lambda) = \int_0^\infty \frac{e^{-t} - e^{-t\lambda}}{t}\, dt \), for \(\lambda > 0\). By differentiating under the integral sign \cite[pp.\ 237]{Rudin64}, we compute \( I'(\lambda) = \frac{1}{\lambda} \). Thus \( I(\lambda) = \log \lambda + c \). Since \( I(1) = 0 \), it follows that \( c = 0 \), so \( I(\lambda) = \log \lambda \).}
	\begin{equation}
		\log \lambda = \int_0^\infty \frac{e^{-t} - e^{-t\lambda}}{t}\, dt, \quad \lambda > 0.
	\end{equation}
	Using the framework of functional calculus (\cite[Ch.\ 31]{LAX02}), we define the operator
	\begin{equation}
		\log(-\Delta_g + m\mathbb{I}) := \int_0^\infty \frac{e^{-t}\mathbb{I} - e^{-t(-\Delta_g + m\mathbb{I})}}{t}\, dt,
	\end{equation}
	and, for any \( v \in C^\infty(M) \), with the commutation property
	\[
	(-\Delta_g + m \mathbb{I}) \circ \log(-\Delta_g + m \mathbb{I}) = \log(-\Delta_g + m \mathbb{I}) \circ (-\Delta_g + m \mathbb{I}),
	\]
	we define
	\begin{equation}\label{log_A_g_s}
		\mathcal{L}_g\, v(x) := (-\Delta_g + m\mathbb{I}) \circ \log(-\Delta_g + m\mathbb{I})\, v(x) = \int_0^\infty \frac{e^{-t} \mathbb{I} - e^{-t(-\Delta_g + m\mathbb{I})}}{t} \, (-\Delta_g + m\mathbb{I}) v(x) \, dt,
	\end{equation}
	for every \( x \in M \).
	
	The following proposition establishes the well-definedness and pointwise formula of \eqref{log_A_g_s}.
	
	\begin{proposition}\label{prop_uniq_sol}
		For every \( u \in C^\infty(M) \) and \( m > 1 \), the operator \(\mathcal{L}_g = \A_g \circ \log\A_g\) admits the pointwise representation
		\begin{align}\label{pointwise_formula}
			\mathcal{L}_g\, u(x)  = \int_0^\infty \frac{e^{-t} \mathbb{I} - e^{-t\A_g}}{t} \, \A_g u(x) \, dt, \quad \forall x \in M,
		\end{align}
		where $\A_g=(-\Delta_g + m\mathbb{I})$.
	\end{proposition}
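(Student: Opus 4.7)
Set $v := \A_g u \in C^\infty(M)$; the pointwise formula will follow if we establish
\[
\int_0^\infty \frac{e^{-t}v(x) - (e^{-t\A_g}v)(x)}{t}\,dt \;=\; \sum_{k\geq 0}\log(\lambda_k+m)\,\pi_k v(x), \qquad x\in M,
\]
since the relation $\pi_k v = (\lambda_k+m)\pi_k u$ then recasts the right-hand side as the spectral series $\sum_k (\lambda_k+m)\log(\lambda_k+m)\,\pi_k u(x)$, which is exactly $\L_g u(x)$ by \eqref{Laplace_log_laplace_spec_def}. The plan thus has two parts: (i) verify absolute pointwise integrability in $t$ for each fixed $x$, and (ii) interchange the integral with the spectral sum via Fubini and invoke the scalar identity $\log\lambda = \int_0^\infty (e^{-t}-e^{-t\lambda})/t\,dt$.

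For (i), I would split $\int_0^\infty = \int_0^1 + \int_1^\infty$. On $[1,\infty)$, Lemma~\ref{esti_1} yields $\|e^{-t\A_g}v\|_{L^\infty(M)} \leq e^{-mt}\|v\|_{L^\infty(M)}$, so the integrand is dominated by $(e^{-t}+e^{-mt})\|v\|_{L^\infty(M)}/t$, which is integrable at infinity. On $(0,1]$, I would decompose $e^{-t}v - e^{-t\A_g}v = (e^{-t}-1)v + (v - e^{-t\A_g}v)$; since $v \in C^\infty(M)$ lies in the domain of $\A_g$, the semigroup identity $v - e^{-t\A_g}v = \int_0^t e^{-s\A_g}\A_g v\,ds$ combined with Lemma~\ref{esti_1} applied to $\A_g v \in C^\infty(M)$ gives $\|v - e^{-t\A_g}v\|_{L^\infty(M)} \leq t\,\|\A_g v\|_{L^\infty(M)}$. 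Together with the elementary $|e^{-t}-1|/t \leq 1$, the integrand is bounded on $(0,1]$ by $\|v\|_{L^\infty(M)} + \|\A_g v\|_{L^\infty(M)}$, hence integrable.

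For (ii), I would expand $e^{-t}v(x) - (e^{-t\A_g}v)(x) = \sum_{k\geq 0}[e^{-t}-e^{-t(\lambda_k+m)}]\pi_k v(x)$ and justify termwise integration. The assumption $m>1$ forces $\lambda_k+m > 1$, so $e^{-t(\lambda_k+m)} < e^{-t}$ on $(0,\infty)$ and each scalar kernel is nonnegative with $\int_0^\infty [e^{-t}-e^{-t(\lambda_k+m)}]/t\,dt = \log(\lambda_k+m)$. Since $v\in C^\infty(M)$, standard elliptic regularity together with H\"ormander's sup-norm bounds $\|\phi_{k,\ell}\|_{L^\infty(M)}\lesssim \lambda_k^{(n-1)/4}$ and the rapid decay of the Fourier-type coefficients of a smooth function imply that $\|\pi_k v\|_{L^\infty(M)}$ decays faster than any polynomial in $\lambda_k$; combined with Weyl's law $\lambda_k\sim k^{2/n}$ and the logarithmic growth of $\log(\lambda_k+m)$, the series
\[
\sum_{k\geq 0}\log(\lambda_k+m)\,\|\pi_k v\|_{L^\infty(M)}
\]
converges absolutely. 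This bound legitimizes Fubini, after which termwise application of the scalar identity yields the claimed equality.

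The main obstacle is the small-$t$ analysis: the apparent $1/t$ singularity is tamed only because $v$ and $\A_g v$ are both bounded, which is why the hypothesis $u\in C^\infty(M)$ (rather than $u$ in a lower-regularity space such as the $\mathcal H(M)$ from \eqref{new_hilbert_space}) is decisive. Once the integrand is controlled on $(0,1]$ by a constant depending only on $\|\A_g u\|_{L^\infty(M)}$ and $\|\A_g^2 u\|_{L^\infty(M)}$, the remaining manipulations are routine spectral-theoretic interchanges.
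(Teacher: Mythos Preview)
Your proof is correct and, for the integrability step, parallels the paper's approach closely. Both arguments split $\int_0^\infty = \int_0^1 + \int_1^\infty$, treat the tail by the exponential contractivity of $e^{-t\A_g}$ from Lemma~\ref{esti_1}, and tame the $1/t$ singularity on $(0,1]$ by exploiting the regularity $\A_g v = \A_g^2 u \in L^\infty(M)$. The technical mechanism differs slightly: the paper applies the mean value theorem in $t$ to the scalar function $t \mapsto (e^{-t}v - e^{-t\A_g}v)(x)$, whereas you use the semigroup identity $v - e^{-t\A_g}v = \int_0^t e^{-s\A_g}\A_g v\,ds$. Your route is arguably cleaner, since the MVT step produces an intermediate point $\theta(t)$ that implicitly depends on $x$, which the paper does not track.

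Your part (ii), the Fubini interchange with the spectral sum, is a genuine addition: the paper's proof establishes only that the integral is finite and does not explicitly verify that it agrees with the spectral definition \eqref{Laplace_log_laplace_spec_def}, leaving this to the phrase ``functional calculus'' preceding \eqref{log_A_g_s}. Your justification via the rapid decay of $\|\pi_k v\|_{L^\infty}$ (H\"ormander's sup-norm bound, Weyl's law, smoothness of $v$) together with the nonnegativity of the scalar kernel $e^{-t}-e^{-t(\lambda_k+m)}$ for $m>1$ is exactly what is needed to make the interchange rigorous, and it makes precise a point the paper in fact uses later (see \eqref{P_2_4}--\eqref{P_2_8.1}) in a different context.
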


	\begin{proof}
		
		One may write
		\begin{align}\label{wd_1}
			\int_0^\infty \frac{e^{-t}\mathbb{I}-e^{-tA_g}}{t} \A_g u(x)\, dt
			= \int_0^1 \frac{e^{-t}\mathbb{I}-e^{-tA_g}}{t}\A_g u(x)\, dt
			+ \int_1^\infty \frac{e^{-t}\mathbb{I}-e^{-tA_g}}{t} \A_g u(x)\, dt.
		\end{align}
		
		Let us justify the first integral,
		\begin{align}\label{wd_2}
			\int_0^1 \frac{e^{-t}\mathbb{I}-e^{-tA_g}}{t}\A_g u(x)\, dt
			= \int_0^1 \partial_t\left(u_0(\theta(t),x) - u(\theta(t),x)\right)\,dt,
		\end{align}
		where \( u(t,x) = e^{-t\mathcal{A}_g}\A_g u(x) \) and \( u_0(t,x) = e^{-t}\A_g u(x) \) for \( (t, x) \in (0, \infty)\times M \). The point \( \theta(t) \in (0, t) \) arises as an intermediate value by the classical mean value theorem. And by applying Lemma~\ref{esti_1} in \eqref{wd_2} we obtains
		\begin{align}\label{wd_3}
			\int_0^1 \frac{\left(e^{-t}\mathbb{I}-e^{-t\mathcal{A}_g}\right)\A_g u(x)}{t}\, dt
			&= \int_0^1 \left(-e^{-t}\A_g u(x) + e^{-t\mathcal{A}_g} \mathcal{A}_g^2 u(x)\right)(\theta(t), x)\,dt \notag \\
			&\leq \left(\left\|\A_g u\right\|_{L^\infty} + \|H_M\|_{L^1(M)} \|\mathcal{A}_g^2 v\|_{L^\infty(M)}\right) \int_0^1 e^{-\theta(t)} dt \notag \\
			&\leq \left\|\A_g u\right\|_{L^\infty} +  \|H_M\|_{L^1(M)} \|\mathcal{A}_g^2 v\|_{L^\infty(M)}.
		\end{align}
		Next, considering the second integral, one can apply Lemma~\ref{esti_1} to obtain
		\begin{align}\label{wd_4}
			\int_1^\infty \frac{e^{-t}\mathbb{I} - e^{-t\A_g}}{t} \A_g u(x)\, dt
			&\leq \int_1^\infty \frac{\left|e^{-t}\mathbb{I} - e^{-t\A_g}\right|}{t}\, \left|\A_g u(x)\right|\, dt\notag \\
			\leq \norm{\A_gu}_{L^\infty(M)} &\int_1^\infty \frac{e^{-t}}{t} dt +\|H_M\|_{L^1(M)} \|\mathcal{A}_g^2 v\|_{L^\infty(M)}\int_1^\infty \frac{e^{-mt}}{t} dt<+\infty.
		\end{align}
		Combining \eqref{wd_3} and \eqref{wd_4} in \eqref{wd_1}, we conclude the well-definedness of \eqref{pointwise_formula}.
	\end{proof}
	
	\section{Unique Continuation Principle(UCP)}
	
	In this section we devout ourselves to the proof of unique continuation principles
	of the non local logarithmic Schr\"odinger operator.
	
	\begin{proof}[\textbf{Proof of the theorem~\ref{thm_ucp}}]
		Let $\omega\Subset \O$ be a nonempty open subset. Since $C^\infty(M)$ is invariant under  $\A_g$, it follows from our assumption that for any integer $k \geq 0$,
		\begin{align}\label{ucp_1}
			\A_g^k v|_\O = 0 \quad \text{and} \quad \mathcal{L}_g \mathcal{A}_g^k v|_\O = 0.
		\end{align}
		Using the pointwise formula   given in \eqref{pointwise_formula}, together with \eqref{ucp_1}, we obtain
		\begin{align}\label{ucp_1.1}
			\int_0^\infty e^{-t\mathcal{A}_g}(\mathcal{A}_g^{k+1} v(x)) \frac{dt}{t} = 0,
		\end{align}
		for every $x \in \O$ and $k = 0, 1, 2, \ldots$.
		
		Moreover, the map $t \mapsto e^{-t\mathcal{A}_g}(\mathcal{A}_g v)$ belongs to $C^\infty\big((0,\infty), C^\infty(M)\big)$. Also on the domain $D(\mathcal{A}_g) = H^2(M)$, the operators commute, i.e., $e^{-t\mathcal{A}_g}\mathcal{A}_g^k = \mathcal{A}_g^k e^{-t\mathcal{A}_g}$. Therefore, for every $t \geq 0$, $x \in \O$, and $k = 0, 1, 2, \ldots$, we have
		\begin{align}\label{ucp_2}
			\int_0^\infty \partial_t^k\left(e^{-t\mathcal{A}_g}\mathcal{A}_g v(x)\right) \frac{dt}{t^{}} = 0.
		\end{align}
		
		Now let $x \in \omega$ and $t > 0$. For $l = 0, 1, 2, \ldots$, we have the integral representation
		\begin{align}\label{ucp_3}
			\partial_t^l \bigl(e^{-t \A_g} \A_gv\bigr)(x) = \int_{M \setminus \O} \tilde{P}(t, x, y) \bigl(\A_g^{l+1} v\bigr)(y) \, dV_g(y),
		\end{align}
		here $\tilde{P}(t,x,y)$ is the heat kernel associated with $\mathcal{A}_g$. It follows that from \eqref{ucp_3} for any $x\in \omega$ and $l=0,1,2..$ We deduce the following estimate
		\begin{align}\label{ucp_4}
			|e^{-t\A_g}\A_g^{l+1}v(x)|= |\partial^l_t\left(e^{-t\A_g}\A_g\right)(x)|\leq \norm{\tilde{P}(t,x,y)}_{L^\infty(\omega\times M\setminus\O)}\norm{\A_g^{l+1}v}_{L^1(M)},
		\end{align}
		it follows from \eqref{ucp_3} that for any $x \in \omega$ and $l = 0, 1, 2, \ldots$, we have the estimate
		\begin{align}\label{ucp_4b}
			\left|\partial_t^l \bigl(e^{-t \mathcal{A}_g} \mathcal{A}_g v\bigr)(x)\right| \leq \|\tilde{P}(t,x,y)\|_{L^\infty(\omega \times (M \setminus \Omega))} \, \|\mathcal{A}_g^{l+1} v\|_{L^1(M)}.
		\end{align}
		For $t\in(0,1)$, using theorem~\ref{het_ker_est} and the relation \eqref{rela_betw_ker} in \eqref{ucp_4}, we obtain
		\begin{align}\label{ucp_5}
			\left|\partial_t^l \bigl(e^{-t \A_g} \A_g v\bigr)(x)\right| \leq C e^{-\frac{c_1}{t}} \norm{\A_g^{l+1} v}_{L^1(M)},
		\end{align}
		where $l=0,1,2,\ldots$ and the constant $c_1$ depends on $d_g(\overline{\omega},M \setminus \O)$.
		Observed that $e^{-t \L_g}$ is a submarkovian semigroup. Then, by \cite[Theorem 1]{varopoulos1985hardy}, we have the following estimate for $t \in [1, \infty)$:
		\begin{align}\label{ucp_6}
			\norm{e^{-t\A_g}\A_gv}_{L^\infty(M)}\leq\frac{C}{t^\frac{n}{2}}\,\norm{\A_gv}_{L^1(M)},
		\end{align}
		for any $l=0,1,2,\ldots$. Next, performing integration by parts $k$ times on \eqref{ucp_2}, it follows from the decay estimates \eqref{ucp_5} and \eqref{ucp_6} that the boundary terms vanish at infinity and zero. Consequently, for every $x\in \omega$ we obtain the integral identity
		\begin{align}
			\label{ucp_7}\int_0^\infty\left( e^{-t\A_g}\A_gv\right)(x) \frac{dt}{t^{k+ 1}}=0.
		\end{align}
		Changing the variable $t=\frac{1}{s}$, for $k=1,2,\ldots$ we get
		\begin{align}\label{ucp_7b}
			& \int_0^\infty s^{k-1}\,\phi(s) \,ds=0\notag\\
			i.e,\,\,& \int_0^\infty s^{k}\,\phi(s) \,ds=0,\quad \text{for $k=0,1,2,\ldots$},
		\end{align}
		where for $x\in \omega$ and $\phi(s)={\left(e^{-\frac{1}{s}\A_g}\A_gv\right)(x)}$.
		
		Combining the estimate \eqref{ucp_5} for \( t = \frac{1}{s} \in (0,1) \) and the estimate \eqref{ucp_6} for \( t = \frac{1}{s} \geq 1 \), we obtain, for all \( s > 0 \),
		\begin{align}\label{ucp_8a}
			|\phi(s)| \leq C e^{-c s}.
		\end{align}
		Consider the Fourier transform of the function \( 1_{[0, \infty)} \phi \):
		\begin{align}\label{ucp_fourier}
			\mathcal{F}(1_{[0, \infty)} \phi)(\xi) = \int_0^\infty \phi(s) e^{-i \xi s} \, ds.
		\end{align}
		From the exponential decay estimate \eqref{ucp_8a}, it follows that \(\mathcal{F}(1_{[0, \infty)} \phi)(\xi)\) extends to a holomorphic function on the half-plane \(\operatorname{Im}(\xi) >-c\). Moreover, in view of \eqref{ucp_7b}, all derivatives of \(\mathcal{F}(1_{[0, \infty)} \phi)(\xi)\) vanish at \(\xi = 0\). By the identity theorem for holomorphic functions, this implies that \(\mathcal{F}(1_{[0, \infty)} \phi)(\xi) \equiv 0\) in the domain, and consequently,
		\[
		\phi(s) = 0 \quad \text{for all } s > 0.
		\]
		\begin{align}\label{ucp_3.19}
			i.e\,\,(e^{-t\A_g}\A_gv)(x)=0,\quad \forall t\geq0 \,\,\text{and $x\in \omega$}.
		\end{align}
		Taking the limit as $t \to 0$, the semigroup property implies
		\[
		\lim_{t \to 0} \left(e^{-t \A_g} \A_g v\right)(x) = (\A_g v)(x)=0.
		\]
		Since $v|_\O = 0$, it follows that
		\begin{align}\label{ucp_08}
			\bigl((-\Delta_g) v\bigr)(x) = 0, \quad \text{for every } x \in \omega.
		\end{align}
		Note that the condition $v|_\O = 0$ combining  with \eqref{ucp_08}, we conclude that
		\[
		v \equiv 0 \quad \text{on } M.
		\]
	\end{proof}
	
	As an application of the unique continuation principle established in theorem~\ref{thm_ucp}, we proceed to prove the following lemma~\ref{non-zero_inn_with_eigen-fun}, which plays an important role in transforming the Calderón problem into the Gel'fand inverse spectral problem.
	
	\begin{lemma}
		\label{non-zero_inn_with_eigen-fun}
		Let \((M, g)\) be a smooth, closed, and connected Riemannian manifold. Let \(\phi \in C^\infty(M)\) be an eigenfunction of \(-\Delta_g\) corresponding to eigenvalue \(\lambda\). Then
		\begin{align}
			\langle S_{M,g,V}(f), \phi \rangle \neq 0, \quad \text{for some } f \in C_0^\infty(\O),
		\end{align}
		where \(S_{M,g,V}(f)\) is the unique solution associated to \(f \in C_0^\infty(\O)\) for equation~\eqref{Schrodinger_operator_for_IP}.
	\end{lemma}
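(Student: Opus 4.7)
The plan is to argue by contradiction: assume $\langle S_{M,g,V}(f), \phi\rangle = 0$ for every $f \in C_0^\infty(\mathcal{O})$, and deduce $\phi \equiv 0$, contradicting that $\phi$ is a nonzero eigenfunction. First, I exploit the self-adjointness of $\mathcal{L}_g + V$ on $L^2(M)$, which follows from the spectral definition \eqref{Laplace_log_laplace_spec_def} together with the fact that $V$ is real-valued. Hence its inverse $S := S_{M,g,V}$ is also self-adjoint, giving $\langle S(f), \phi \rangle = \langle f, S(\phi) \rangle$. Setting $w := S(\phi) \in C^\infty(M)$, the hypothesis rewrites as $\int_{\mathcal{O}} f\, w\, dV_g = 0$ for every $f \in C_0^\infty(\mathcal{O})$, so that $w\big|_{\mathcal{O}} \equiv 0$; moreover $w$ solves $(\mathcal{L}_g + V)w = \phi$ on $M$ by construction.

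Next, since $w$ vanishes identically on the open set $\mathcal{O}$, we have $(Vw)\big|_{\mathcal{O}} = 0$, and restricting the equation to $\mathcal{O}$ yields $\mathcal{L}_g w\big|_{\mathcal{O}} = \phi\big|_{\mathcal{O}}$. In the setting of Theorem~\ref{thm_potential_with compact_supp} where $V \in C_0^\infty(\mathcal{O})$, the potential also vanishes on $M \setminus \mathcal{O}$, so the equation gives $\mathcal{L}_g w = \phi$ there as well; altogether $\mathcal{L}_g w = \phi$ globally on $M$. Since $\phi$ is a $(-\Delta_g)$-eigenfunction with eigenvalue $\lambda$, it is an $\mathcal{L}_g$-eigenfunction with eigenvalue $\mu := (\lambda+m)\log(\lambda+m) > 0$, so spectral inversion produces $w = \phi/\mu$. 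The identity $w\big|_{\mathcal{O}} = 0$ then forces $\phi\big|_{\mathcal{O}} = 0$, and Aronszajn's classical unique continuation principle applied to the elliptic eigenfunction equation $-\Delta_g \phi = \lambda \phi$ produces $\phi \equiv 0$ on $M$, the desired contradiction.

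For a general smooth potential $V$ (as in Theorems~\ref{recovering_metric_and_potential_from_one_manifold} and \ref{update_with_smooth_potential}), the shortcut above fails since $V$ need not vanish off $\mathcal{O}$, and the argument must go through Theorem~\ref{thm_ucp} directly: once $\phi\big|_{\mathcal{O}} = 0$ is established, the UCP hypotheses $w\big|_{\mathcal{O}} = 0$ and $\mathcal{L}_g w\big|_{\mathcal{O}} = \phi\big|_{\mathcal{O}} = 0$ yield $w \equiv 0$, hence $\phi = (\mathcal{L}_g + V)w = 0$. The hard part — and the main obstacle — is deriving $\phi\big|_{\mathcal{O}} = 0$ from the hypothesis alone. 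The natural device is to work with the auxiliary function $v := \mu w - \phi$, which satisfies $\mathcal{L}_g v = -\mu V w$ globally, so that $\mathcal{L}_g v\big|_{\mathcal{O}} = 0$ automatically; coupling this with the heat-semigroup integral representation \eqref{pointwise_formula} of $\mathcal{L}_g$ — precisely the tool powering Theorem~\ref{thm_ucp} — and the eigenfunction relation $-\Delta_g \phi = \lambda \phi$ should propagate the vanishing back to $\mathcal{O}$ and close the argument.
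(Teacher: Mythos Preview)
Your setup is the same as the paper's: by contradiction, let $w\in C^\infty(M)$ solve $(\mathcal{L}_g+V)w=\phi$ (the paper writes $\overline{V}$ to allow complex potentials) and use duality to get $w|_{\mathcal{O}}=0$, hence $\mathcal{L}_g w|_{\mathcal{O}}=\phi|_{\mathcal{O}}$. For $V\in C_0^\infty(\mathcal{O})$ your shortcut is correct and pleasant: $Vw\equiv 0$ on all of $M$, so $\mathcal{L}_g w=\phi$ globally, whence $w=\phi/\mu$ and $\phi|_{\mathcal{O}}=0$, and Aronszajn finishes it.

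The gap is in the general-$V$ case. Your auxiliary $v:=\mu w-\phi$ indeed satisfies $\mathcal{L}_g v=-\mu Vw$ and hence $\mathcal{L}_g v|_{\mathcal{O}}=0$, but $v|_{\mathcal{O}}=-\phi|_{\mathcal{O}}$ is \emph{not} known to vanish, so Theorem~\ref{thm_ucp} does not apply to $v$. The vague appeal to the heat representation and the eigenfunction relation does not close this; you still need both hypotheses of the UCP. The paper's device is to take instead
\[
v_{\mathrm{pap}}:=(-\Delta_g-\lambda)\,w.
\]
Because $-\Delta_g-\lambda$ is a \emph{local} operator and $w|_{\mathcal{O}}=0$, one gets $v_{\mathrm{pap}}|_{\mathcal{O}}=0$ for free. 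Applying $-\Delta_g$ to $(\mathcal{L}_g+V)w=\phi$, using that $\mathcal{L}_g$ and $-\Delta_g$ commute and $-\Delta_g\phi=\lambda\phi$, yields
\[
\mathcal{L}_g v_{\mathrm{pap}}=-\,(-\Delta_g-\lambda)(Vw),
\]
which again vanishes on $\mathcal{O}$ by locality. Now Theorem~\ref{thm_ucp} gives $v_{\mathrm{pap}}\equiv 0$, i.e.\ $(-\Delta_g-\lambda)w=0$ on $M$ with $w|_{\mathcal{O}}=0$; Aronszajn's UCP then forces $w\equiv 0$ and hence $\phi=(\mathcal{L}_g+V)w=0$, the contradiction. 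Note that $(-\Delta_g-\lambda)v=\mu\,v_{\mathrm{pap}}$, so hitting your $v$ once more with the local operator $-\Delta_g-\lambda$ would have landed you exactly on the paper's argument; as written, that step is missing.
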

	\begin{proof}
		To prove the lemma by contradiction, suppose that
		\begin{align}\label{inn_1}
			\langle{S_{M,g,V}\,(f)}\, {\phi}, \rangle = 0, \quad \text{for all}\ f \in C_0^\infty(\mathcal{O}).
		\end{align}
		Consider the equation
		\begin{align}\label{inn_2}
			\A_{g,\overline{V}} u = \L_g  u + \overline{V} u = \phi \quad \text{on } M,
		\end{align}
		which admits a unique solution $u \in C^\infty(M)$, thanks to proposition~\ref{uniq_sol}. Combining \eqref{inn_1} and \eqref{inn_2}, we obtain
		\begin{align}
			0 = \langle{S_{M,g,V}}, \, {\phi}\rangle_{L^2(M)} = \langle{S_{M,g,V\,}}, \, {\,\,\A_{g,\overline{V}} u}\rangle_{L^2(M)} = \langle{f},\, {u}\rangle, \quad \forall f \in C_0^\infty(\mathcal{O}).
		\end{align}
		This implies that
		\begin{align}
			u|_{\mathcal{O}} = 0.
		\end{align}
		Now applying $-\Delta_g$ to \eqref{inn_2}, we get
		\begin{align}
			\label{inn_3}
			\L_g  (-\Delta_g) u + (-\Delta_g)(\overline{V} u) &= \lambda \phi.
		\end{align}
		Multiplying \eqref{inn_2} by $-\lambda$ gives
		\begin{align}
			\label{inn_4}
			-\lambda \,\L_g  u - \lambda \overline{V} u &= -\lambda \phi.
		\end{align}
		Adding \eqref{inn_3} and \eqref{inn_4} yields
		\begin{align}
			\L_g \bigl((-\Delta_g) u - \lambda u\bigr) + \bigl((-\Delta_g) - \lambda\bigr)(\overline{V} u) = 0,
		\end{align}
		that is,
		\begin{align}
			\L_g  v = \bigl((-\Delta_g) - \lambda \bigr)(\overline{V} u), \quad \text{where } v = \bigl((-\Delta_g) - \lambda\bigr) u.
		\end{align}
		Since \(u|_{\mathcal{O}} = 0\), we also have \(v|_{\mathcal{O}} = 0\) and \(\L_g v|_{\mathcal{O}} = 0\). Thanks to the unique continuation principle in theorem~\ref{thm_ucp}, we conclude that
		\[
		v \equiv 0 \quad \text{on } M,
		\]
		i.e.,
		\[
		(-\Delta_g) u - \lambda u = 0 \quad \text{on } M \quad \text{and} \quad u|_{\mathcal{O}} = 0.
		\]
		As $M$ is connected, the unique continuation property for the Laplace--Beltrami operator $\Delta_g$ ensures that $u \equiv 0$ on $M$. Substituting this into \eqref{inn_1}, we deduce
		\[
		\phi \equiv 0 \quad \text{on } M,
		\]
		which contradicts the fact that \(\phi\) is a nontrivial. This completes the proof.
	\end{proof}
	
	\bigskip
	
	\section{Calder\`on problem to Gel'fand problem}\label{sec_reduction_to_spectral_data}
	
	In this section, we develop tools that facilitate the reduction of the Calderón problem to Gel'fand problem. We start with the following theorem and a lemma which will play a key role to understand the composite action of the heat semigroup and the non local logarithmic operator.
	
	\begin{theorem}[\cite{rudin1987real}, p.~371]
		\label{upper_half_holomorphic_extension}
		Let \(\phi(s)\in L^2(0,\infty)\), and suppose that \(|\phi(s)|\leq e^{-2\pi cs}\) for all \(s\geq 0\) and for some \(c>0\). Then the following function,
		\begin{align}\label{holo_function}
			f(z):=\int_0^\infty \phi(s) \,e^{2\pi i z s} \, ds,
		\end{align}
		is holomorphic on \(\D_c=\{z = x + i y : y > -c\}\).
	\end{theorem}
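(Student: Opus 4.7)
The plan is to establish the absolute convergence of the integral defining $f$ on $\D_c$ and then upgrade this to holomorphy by combining the pointwise exponential bound on $\phi$ with two classical tools: Fubini's theorem and Morera's theorem (with differentiation under the integral as an equivalent alternative).

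First, for any $z = x + iy \in \D_c$, I would note the identity $|e^{2\pi i z s}| = e^{-2\pi y s}$, which together with the hypothesis $|\phi(s)| \leq e^{-2\pi c s}$ gives the pointwise estimate $|\phi(s)\,e^{2\pi i z s}| \leq e^{-2\pi(c+y)s}$. Since $c + y > 0$ when $y > -c$, this majorant is integrable on $(0,\infty)$, so $f(z)$ is well-defined pointwise on $\D_c$. To promote this to continuity, I would localize: given any compact $K \subset \D_c$, choose $\epsilon > 0$ with $y \geq -c + \epsilon$ for every $z = x+iy \in K$, and observe that the uniform majorant $e^{-2\pi\epsilon s}$ allows the dominated convergence theorem to establish continuity of $f$ on $K$.

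To establish holomorphy, I would apply Morera's theorem. For any closed triangle $T \subset \D_c$ fitting inside a substrip $\{y \geq -c + \epsilon\}$, I would compute
\[
\int_{\partial T} f(z)\,dz = \int_{\partial T}\int_0^\infty \phi(s)\,e^{2\pi i z s}\,ds\,dz,
\]
and then invoke Fubini to swap the order of integration, justified by the absolute integrability of the joint integrand over $\partial T \times (0,\infty)$ (the length of $\partial T$ is finite and the substrip majorant $e^{-2\pi\epsilon s}$ is $L^1$). The inner integral $\int_{\partial T} e^{2\pi i z s}\,dz$ vanishes by Cauchy's theorem since $z \mapsto e^{2\pi i z s}$ is entire for each fixed $s \geq 0$. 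Hence $\int_{\partial T} f(z)\,dz = 0$ for every such triangle, and Morera's theorem concludes that $f$ is holomorphic on $\D_c$.

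The only step needing real care is the verification of the Fubini hypothesis, which is pure bookkeeping given the substrip-uniform exponential bound. As a cleaner alternative, I could bypass Morera entirely by differentiating under the integral sign: the estimate $|s\,\phi(s)\,e^{2\pi i z s}| \leq s\,e^{-2\pi(c+y)s}$ remains integrable on $(0,\infty)$ because the extra polynomial factor $s$ is absorbed by the exponential, and a standard dominated convergence argument yields the explicit formula $f'(z) = 2\pi i \int_0^\infty s\,\phi(s)\,e^{2\pi i z s}\,ds$ on all of $\D_c$, giving holomorphy immediately.
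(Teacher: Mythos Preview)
Your proposal is correct and follows essentially the same route as the paper's own proof: pointwise well-definedness from the exponential bound, continuity via dominated convergence on substrips, and holomorphy via Fubini plus Morera applied to triangles in $\D_c$. The only addition is your alternative of differentiating under the integral sign, which the paper does not include but which is an equally valid shortcut.
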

	
	\begin{proof}
		For $\phi\in L^2(0,\infty)$ and $z\in \D_c$, the following estimate establishes, well-definedness of $f(z)$ as defined in~\eqref{holo_function}:
		\begin{align}
			\left| f(z) \right| &\leq \int_0^\infty |\phi(s)| e^{-2\pi\,y\,s} ds \leq\int_0^\infty e^{-2\pi(y+c)s}< \infty.
		\end{align}
		Let \( z \in \D_c \) and let \( (z_n) \) in $\D_c$ be a sequence such that \( z_n \to z \). Then, we estimate the difference:
		\begin{align*}
			\left|  f(z_n) - f(z) \right| & \leq  \int_0^\infty  |\phi(s)| |e^{2\pi i z_n s}- e^{2\pi i z s}| ds  \\
			&\leq e^{-2\pi cs}  \int_0^\infty    |e^{2\pi i z_n s}- e^{2\pi i z s} |^2 ds
		\end{align*}
		If \( \operatorname{Im}(z) +c > \delta > 0 \) and  \( \operatorname{Im}(z_n) +c> \delta \), then
		\[
		|e^{2\pi s (i x_n-y_n-c) }- e^{2\pi s (i x-y-c) } | \leq 4 e^{-2\delta s},
		\]
		now, using the dominated convergence theorem (DCT), we conclude that \( f(z) \) is continuous at \( z \in \D_c\). 
		To verify that $f(z)$ is holomorphic in $\D_c$. Let $T$ be any closed triangle contained in $\D_c$. Consider the contour integral
		\begin{align}
			\int_T f(z) \, dz &= \int_T \int_0^\infty \phi(s)\, e^{2\pi i z s} \, ds \, dz.
		\end{align}
		The integrand is absolutely integrable, allowing us to apply Fubini's theorem to interchange the integrals. In addition the function $z \mapsto e^{2\pi i z s}$ is entire, so the contour integral over the closed triangle vanishes. Hence
		\begin{align*}
			\int_T f(z) \, dz = \int_0^\infty \phi(s) \left( \int_T e^{2\pi i z s} dz \right) ds=0.
		\end{align*}
		By Morera’s theorem, it follows that $f(z)$ is holomorphic in $\D_c$.
	\end{proof}
	\begin{lemma}
		\label{Zero}
		Let \(\phi \in L^2(0,\infty)\) and assume \(|\phi(s)| \leq e^{-cs}\) for all \(s \geq 0\), where \(c > 0\). Let the function \(f(z)\) be defined as in~\eqref{holo_function}. Then \(f(z)\) is holomorphic in \(\mathbb{D}_c\). Furthermore, if for every integer \(k \in \mathbb{N} \cup \{0\}\),
		\begin{align}\label{condition_in_lemma_3.2}
			\int_0^\infty s^k\, \phi(s)\, ds = 0,
		\end{align}
		then \(\phi(s) = 0\) for almost every \(s \in (0, \infty)\).
	\end{lemma}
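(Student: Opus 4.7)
The plan is to combine Theorem~\ref{upper_half_holomorphic_extension} with the identity theorem for holomorphic functions and the injectivity of the Fourier transform. First, the holomorphicity of $f$ on the half-plane $\mathbb{D}_c$ is already granted by Theorem~\ref{upper_half_holomorphic_extension}, since the hypothesis $|\phi(s)|\leq e^{-cs}$ certainly implies $|\phi(s)|\leq e^{-2\pi c' s}$ for $c'=c/(2\pi)>0$ (one works on the correspondingly smaller strip, or one simply re-runs the same argument with the constant adjusted).

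Next, I would exploit the moment vanishing condition \eqref{condition_in_lemma_3.2} by computing the Taylor coefficients of $f$ at the origin. Using the exponential decay of $\phi$, the dominated convergence theorem justifies differentiation under the integral sign, giving
\begin{equation*}
f^{(k)}(0)=(2\pi i)^{k}\int_0^\infty s^{k}\phi(s)\,ds=0,\qquad k=0,1,2,\ldots.
\end{equation*}
Since $\mathbb{D}_c=\{z:\operatorname{Im} z>-c\}$ is a connected open set containing $0$, the identity theorem forces $f\equiv 0$ on $\mathbb{D}_c$.

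Restricting to the real axis, this says that the Fourier transform (in the convention used in \eqref{holo_function}) of the $L^1\cap L^2$ function $\mathbf{1}_{[0,\infty)}\phi$ is identically zero on $\mathbb{R}$. By the injectivity of the Fourier transform on $L^2(\mathbb{R})$ (equivalently, by Plancherel), we conclude $\mathbf{1}_{[0,\infty)}\phi=0$ almost everywhere, i.e.\ $\phi(s)=0$ for almost every $s\in(0,\infty)$.

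There is no real obstacle in this argument: the delicate point is merely the verification that differentiation under the integral sign is legitimate and that the moment identities correctly encode the Taylor coefficients of $f$ at $0$, both of which are routine consequences of the exponential decay hypothesis $|\phi(s)|\leq e^{-cs}$. The rest is a direct application of the identity theorem and Fourier injectivity, both standard tools already invoked earlier in the paper (for instance in the proof of Theorem~\ref{thm_ucp}).
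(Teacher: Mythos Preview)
Your proposal is correct and follows essentially the same line as the paper's own proof: invoke Theorem~\ref{upper_half_holomorphic_extension} for holomorphicity, identify the vanishing moment conditions with the vanishing of all Taylor coefficients of $f$ at $0$, apply the identity theorem to get $f\equiv 0$, and then conclude $\phi\equiv 0$ a.e.\ via Fourier injectivity. If anything, your version is more explicit about the justification of differentiation under the integral and the final Fourier step than the paper's one-line proof.
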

	
	\begin{proof}
		By Theorem~\ref{upper_half_holomorphic_extension}, we conclude that the function \(f(z)\) is holomorphic in \(\mathbb{D}_c\). From the assumption~\eqref{condition_in_lemma_3.2}, all derivatives of \(f(z)\) at the origin vanish. Therefore, by analytic continuation, \(f(z) \equiv 0\), which implies that \(\phi \equiv 0\).
	\end{proof}
	The above lemma helps us to derive the following proposition which is strategically very important to transform the Calderon problem to Gel'fand inverse spectral data problem.
	\begin{proposition}\label{key_prop_for_all_thm}
		Let $(M_i, g_i)$, $i=1,2$, be smooth, closed, connected Riemannian manifolds of dimension $n \geq 2$. Let $\mathcal{O} \subset M_1 \cap M_2$ be a non-empty, open, connected subset such that $M_i \setminus \overline{\mathcal{O}} \neq \emptyset$ for $i=1,2$, and assume $(\mathcal{O}, g_1) = (\mathcal{O}, g_2) := (\mathcal{O}, g)$. Let $u_i \in C^\infty(M_i)$. Further suppose that
		\[
		u_1|_{\mathcal{O}} = u_2|_{\mathcal{O}}, \qquad \L_{g_1}  u_1|_{\mathcal{O}} = \L_{g_2}  u_2|_{\mathcal{O}}.
		\]
		Then, for every $x \in \mathcal{O}$ and $t > 0$,
		\begin{align}
			\left[e^{-t \A_{g_1}} \L_{g_1}  u_1 - e^{-t \A_{g_2}} \L_{g_2}  u_2\right](t,x) = 0.
		\end{align}
	\end{proposition}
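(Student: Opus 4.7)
The plan is to adapt the strategy of the unique continuation principle (Theorem~\ref{thm_ucp}) to the two-manifold setting. Setting $f_i := \mathcal{L}_{g_i} u_i$, the hypotheses read $u_1|_\mathcal{O} = u_2|_\mathcal{O}$ and $f_1|_\mathcal{O} = f_2|_\mathcal{O}$. Because $\mathcal{A}_g = -\Delta_g + m$ is a local operator that coincides on $\mathcal{O}$ under both metrics, iterated applications yield
\[
\mathcal{A}_{g_1}^k u_1\big|_\mathcal{O} = \mathcal{A}_{g_2}^k u_2\big|_\mathcal{O}, \qquad \mathcal{A}_{g_1}^k f_1\big|_\mathcal{O} = \mathcal{A}_{g_2}^k f_2\big|_\mathcal{O}, \qquad k = 0, 1, 2, \ldots
\]
My first target is to show that the auxiliary function
\[
V(t,x) := e^{-t\mathcal{A}_{g_1}} u_1(x) - e^{-t\mathcal{A}_{g_2}} u_2(x)
\]
vanishes for $x \in \mathcal{O}$ and $t > 0$. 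Applying Proposition~\ref{prop_uniq_sol} (pointwise formula) to $\mathcal{A}_{g_i}^k u_i$ and using the commutation $\mathcal{L}_{g_i} \mathcal{A}_{g_i}^k u_i = \mathcal{A}_{g_i}^k f_i$, subtracting the two identities at $x \in \mathcal{O}$ kills both the left-hand sides and the zeroth-order integrand $e^{-t}[\mathcal{A}_{g_1}^{k+1} u_1 - \mathcal{A}_{g_2}^{k+1} u_2]$; after rewriting the remaining terms through $e^{-t\mathcal{A}_g}\mathcal{A}_g^{k+1} u = (-1)^{k+1}\partial_t^{k+1}(e^{-t\mathcal{A}_g} u)$, this becomes
\[
\int_0^\infty \frac{\partial_t^{k+1} V(t,x)}{t}\, dt = 0, \qquad x \in \mathcal{O},\ k \geq 0.
\]
Integrating by parts $k+1$ times (boundary terms at $t=0^+$ vanish since $\partial_t^j V(0^+,x) = 0$ on $\mathcal{O}$ for every $j$ by the identities above, and at $t=\infty$ by the mapping property $\|e^{-t\mathcal{A}_g}\|_{L^\infty \to L^\infty} \leq e^{-mt}$ of Lemma~\ref{esti_1}) and substituting $s = 1/t$, I arrive at the moment conditions
\[
\int_0^\infty s^k V(1/s, x)\, ds = 0, \qquad x \in \mathcal{O},\ k \geq 0.
\]

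The delicate analytic ingredient, and the main obstacle, is the exponential decay $|V(t,x)| \leq C e^{-c/t}$ for $x \in \omega \Subset \mathcal{O}$ and small $t$, which translates into $|V(1/s,x)| \leq C e^{-cs}$ as $s \to \infty$ required by Lemma~\ref{Zero}. Writing $V$ through the heat kernels $\tilde{P}_i$, and choosing nested compacta $\omega \Subset \omega' \Subset \mathcal{O}$, I split the integrals over $\omega'$, $\mathcal{O}\setminus \omega'$, and $M_i \setminus \mathcal{O}$. The last two pieces are dominated by Grigor'yan's Gaussian estimate (Theorem~\ref{het_ker_est}), using $d_{g_i}(\omega, \mathcal{O}\setminus \omega') > 0$ and $d_{g_i}(\omega, M_i\setminus \mathcal{O}) \geq \delta > 0$, producing bounds $\leq Ce^{-c/t}$. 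For the diagonal contribution $\int_{\omega'}[\tilde{P}_1 - \tilde{P}_2](t,x,y)\, u(y)\, dV_g(y)$, I need a heat kernel comparison of the form $|\tilde P_1(t,x,y) - \tilde P_2(t,x,y)| \leq Ce^{-c/t}$ for $x,y \in \omega'$ and small $t$; this follows from the classical estimate that each $\tilde P_i$ differs from the Dirichlet heat kernel on $(\mathcal{O}, g)$ by an exponentially small correction in the interior, equivalently via finite speed of propagation for the wave equation combined with Kannai's subordination formula. For large $t$ the uniform factor $e^{-mt}$ with $m>1$ suffices.

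With moment conditions and exponential decay both established, Lemma~\ref{Zero} forces $V(1/s, x) \equiv 0$, hence $V \equiv 0$ on $\mathcal{O} \times (0,\infty)$. To conclude, I use locality once more: since $V|_\mathcal{O} = 0$ for every $t$, the functions $\mathcal{A}_{g_1}^k e^{-t\mathcal{A}_{g_1}} u_1$ and $\mathcal{A}_{g_2}^k e^{-t\mathcal{A}_{g_2}} u_2$ coincide on $\mathcal{O}$ for every $k$. Re-applying Proposition~\ref{prop_uniq_sol} now to $e^{-t\mathcal{A}_{g_i}} u_i$ and subtracting on $\mathcal{O}$, the identical cancellation occurs in the integrand, yielding $\mathcal{L}_{g_1} e^{-t\mathcal{A}_{g_1}} u_1 = \mathcal{L}_{g_2} e^{-t\mathcal{A}_{g_2}} u_2$ on $\mathcal{O}$. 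The commutation $\mathcal{L}_g e^{-t\mathcal{A}_g} = e^{-t\mathcal{A}_g}\mathcal{L}_g$ then rearranges this to the desired identity, completing the proof.
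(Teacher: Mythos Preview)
Your proposal follows the same overall architecture as the paper: derive coincidence of $\mathcal{A}_{g_i}^k u_i$ and $\mathcal{A}_{g_i}^k f_i$ on $\mathcal{O}$ from locality, feed this into Proposition~\ref{prop_uniq_sol} and integrate by parts to obtain the moment conditions $\int_0^\infty s^k\,V(1/s,x)\,ds=0$, invoke Lemma~\ref{Zero} to conclude $V\equiv 0$ on $\mathcal{O}\times(0,\infty)$, and finally re-apply the pointwise formula at a fixed $t_0>0$ (together with the commutation $\mathcal{L}_g e^{-t\mathcal{A}_g}=e^{-t\mathcal{A}_g}\mathcal{L}_g$) to reach the desired identity. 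Your final step is exactly the paper's last computation, just phrased slightly differently.

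The one substantive difference is how the hypotheses of Lemma~\ref{Zero} are verified. The paper shows $g,g'\in L^2(0,\infty)$ via the contraction bound $\|e^{-t\mathcal{A}_g}\|_{L^\infty\to L^\infty}\le e^{-mt}$, applies Hardy's inequality to conclude $\phi:=g(1/\cdot)\in L^2(0,\infty)$, and then invokes Lemma~\ref{Zero}. You instead go after the pointwise bound $|V(t,x)|\le Ce^{-c/t}$ for $x$ in a compact subset of $\mathcal{O}$ and small $t$, by splitting the heat-kernel integrals into near/far pieces and comparing both $\tilde P_i$ with the Dirichlet heat kernel on $(\mathcal{O},g)$ (equivalently, via finite propagation speed for the wave group together with Kannai's transmutation). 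Your route has the virtue of directly supplying the decay $|\phi(s)|\le Ce^{-cs}$ that Lemma~\ref{Zero}, as stated, actually requires for analytic continuation of the Fourier transform; the paper's Hardy step yields only $\phi\in L^2$, which by itself does not force holomorphy in a strip. In that sense your version is in fact more complete at this step, and the heat-kernel localization you sketch is standard and correct.
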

	\begin{proof}
		From our assumption that \( u_1 \) and \( u_2 \) coincide on \(\mathcal{O}\), i.e., \( u_1|_{\mathcal{O}} = u_2|_{\mathcal{O}} \), it follows that for every \( k \in \mathbb{N} \cup \{0\} \),
		\begin{align}
			\label{P_1_0}
			\mathcal{A}_{g_1}^k u_1|_{\mathcal{O}} = \mathcal{A}_{g_2}^k u_2|_{\mathcal{O}}.
		\end{align}
		Using the identification \((\mathcal{O}, g_1) = (\mathcal{O}, g_2) = (\mathcal{O}, g)\) and the equality
		\(
		\mathcal{L}_{g_1} u_1|_{\mathcal{O}} = \mathcal{L}_{g_2} u_2|_{\mathcal{O}},
		\)
		we deduce that
		\begin{align}
			\label{P_1_1}
			\mathcal{L}_{g_1} \mathcal{A}_{g_1}^k u_1|_{\mathcal{O}} = \mathcal{L}_{g_2} \mathcal{A}_{g_2}^k u_2|_{\mathcal{O}}.
		\end{align}
		Applying the pointwise formula from  \eqref{pointwise_formula}, this yields, for any \( x \in \mathcal{O} \),
		\begin{align}\label{P_1_1.1}
			\int_0^\infty \left( e^{-t \mathcal{A}_{g_1}} \mathcal{A}_{g_1}^{k+1} u_1 - e^{-t \mathcal{A}_{g_2}} \mathcal{A}_{g_2}^{k+1} u_2 \right)(t,x) \frac{dt}{t} = 0,
		\end{align}
		for all \( k \in \mathbb{N} \cup \{0\} \) and \( t \geq 0 \). Recall that on the domain of \(\mathcal{A}_{g_i}^k\), the operators commute, i.e, \(e^{-t \mathcal{A}_{g_i}} \mathcal{A}_{g_i}^k = \mathcal{A}_{g_i}^k e^{-t \mathcal{A}_{g_i}}.\)
		In particular,
		\begin{align}
			\label{P_1_2}
			\partial_t^k \left( e^{-t \mathcal{A}_{g_i}} u_i \right) = (-1)^k e^{-t \mathcal{A}_{g_i}} \mathcal{A}_{g_i}^k u_i.
		\end{align}
		Using \eqref{P_1_2} along with \eqref{P_1_1}, for every \( k = 1, 2, \ldots \), we obtain
		\begin{align}
			\label{P_1_2.1}
			\int_0^\infty \partial_t^k \left( e^{-t \mathcal{A}_{g_1}} u_1 - e^{-t \mathcal{A}_{g_2}} u_2 \right)(t,x) \frac{dt}{t} = 0.
		\end{align}
		
		One can observe that for any \( u_i \in C^\infty(M_i) \), the function $\left(e^{-t \mathcal{A}_{g_1}} u_1 - e^{-t \mathcal{A}_{g_2}} u_2 \right) \big|_{(0, \infty) \times \overline{\mathcal{O}}}$
		satisfies the heat equation
		\begin{align*}
			(\partial_t - \mathcal{A}_g) \left(e^{-t \mathcal{A}_{g_1}} u_1 - e^{-t \mathcal{A}_{g_2}} u_2 \right) = 0 \quad \text{in } (0, \infty) \times \mathcal{O},
		\end{align*}
		with initial condition $\left(e^{-t \mathcal{A}_{g_1}} u_1 - e^{-t \mathcal{A}_{g_2}} u_2\right)(0, x) = u_1(x) - u_2(x) = 0,  \,\,\forall x \in \mathcal{O}.$ Moreover, from \eqref{P_1_0}, for any \( k \in \mathbb{N} \),
		\begin{align}\label{P_1_3}
			\left(e^{-t \mathcal{A}_{g_1}} \mathcal{A}_{g_1}^k u_1 - e^{-t \mathcal{A}_{g_2}} \mathcal{A}_{g_2}^k u_2 \right)(t,x) = (-1)^k \partial_t^k \left(e^{-t \mathcal{A}_{g_1}} u_1 - e^{-t \mathcal{A}_{g_2}} u_2 \right)(t,x) = 0.
		\end{align}
		Recall from Theorem~(1) in \cite{varopoulos1985hardy} that the submarkovian semigroup \((e^{-t \mathcal{A}_{g_i}})_{t \geq 0}\) satisfies the  following estimate
		\begin{align}
			\label{P_1_4}
			\| e^{-t \mathcal{A}_{g_i}} u_i \|_{A^\infty(M_i)} \leq C t^{-\frac{n}{2}} \| u_i \|_{L^1(M_i)}.
		\end{align}
		Using \eqref{P_1_4} together with the vanishing property \eqref{P_1_3}, we can apply integration by parts to \eqref{P_1_2.1} without boundary terms. Thus we obtain
		\begin{align}\label{P_1_5}
			\int_0^\infty (e^{-t \mathcal{A}_{g_1}} u_1 - e^{-t \mathcal{A}_{g_2}} u_2)(t,x) \frac{dt}{t^{k+1}} = 0,
		\end{align}
		for all \( x \in \mathcal{O} \) and \( k \in \mathbb{N} \). Now fix \( x \in \mathcal{O} \) and let $g(t) = \left(e^{-t \mathcal{A}_{g_1}} u_1 - e^{-t \mathcal{A}_{g_2}} u_2 \right)(t,x), \quad t \in (0, \infty).$
		From \eqref{P_1_5}, it follows that
		\begin{align}
			\label{P_1_6}
			\int_0^\infty g(t) \frac{dt}{t^{k+1}} = 0.
		\end{align}
		Rewriting \eqref{P_1_6} with the change of variable \( t = \frac{1}{s} \) and defining \( \phi(s) := g\left(\frac{1}{s}\right) \), we obtain
		\begin{align}
			\label{P_1_6.5}
			\int_0^\infty \phi(s)\, s^k\, ds = 0,
		\end{align}
		for all integers \( k = 0, 1, 2, \ldots \).
		\medskip
		
		Here \( g(t), g^\prime(t) \in L^2(0,\infty) \). To see this, note that Lemma~\ref{esti_1} implies
		\begin{align}\label{P_1_6.1}
			\int_0^\infty |g(t)|^2 \, dt \leq C \left( \|u_1\|_{L^\infty(M_1)} \|H_{M_1}\|_{L^1(M_1)} + \|u_2\|_{L^\infty(M_2)} \|H_{M_2}\|_{L^1(M_2)} \right) \int_0^\infty e^{-mt} \, dt < \infty,
		\end{align}
		and\begin{align}\label{P_1_6.1.2}
			\int_0^\infty |g'(t)|^2 \, dt \leq C \left( \|\mathcal{A}_{g_1} u_1\|_{L^\infty(M_1)} \|H_{M_1}\|_{L^1(M_1)} + \|\mathcal{A}_{g_2} u_2\|_{L^\infty(M_2)} \|H_{M_2}\|_{L^1(M_2)} \right) {\small \int_0^\infty e^{-mt} \, dt < \infty.}
		\end{align}

		Now applying the Hardy's inequality\footnote{The celebrated Hardy inequality states that, if $1 < p < \infty$ and if $u$ is a locally
			absolutely continuous function on $(0, \infty)$ with $\liminf_{r\to 0}|u(r)| = 0$, then 
			\[\int_0^\infty \frac{|u(r)|^p}{r^p}\, dr\leq (\frac{p}{p-1})^p\,\int_0^\infty |u^\prime(r)|^p\, dr.\]
			The constant on the right side is the best possible. 
			We refer to \cite{FLW} and reference therein.} and conclude that 
		\begin{align}
			& \int_0^\infty \frac{|g(t)|^2}{t^2}\, dt \leq 4\int_0^\infty |g^\prime (t)|^2\, dt.\notag\\
			i.e,& \,\, \int_0^\infty |g(\frac{1}{s})|^2\, ds= \int_0^\infty |\phi( s)|^2\, ds<\infty.
		\end{align}
		Therefore, $\phi (s)=g(\frac{1}{s})\in L^2(0,\infty)$ and using the lemma~\ref{Zero} we conclude that $\phi (s)\equiv 0$, i.e, $g(t)\equiv 0$ for $t>0$. Indeed, we have
		\begin{align}\label{P_1_7}
			(e^{-t\A_{g_1}}\,u_1-e^{-t\A_{g_2}}\,u_2)(t,x)=0,
		\end{align}
		for every $x\in \O$ and $t\in (0,\infty)$.
		\medskip
		
		Now, fix \( t_0 > 0 \) and \( x \in \mathcal{O} \) and consider the following difference:
		\begin{align*}
			&\left(e^{-t_0\mathcal{A}_{g_1}}\, \mathcal{L}_{g_1}\,u_1 - e^{-t_0\mathcal{A}_{g_2}}\, \mathcal{L}_{g_2}\,u_2\right)(t, x) \\
			&= \left(\mathcal{L}_{g_1}\,e^{-t_0\mathcal{A}_{g_1}}\,u_1 - \mathcal{L}_{g_2}\,e^{-t_0\mathcal{A}_{g_2}}\,u_2\right)(t, x) \\
			&=   \int_0^\infty \left(e^{-t_0\mathcal{A}_{g_1}}\, \mathcal{A}_{g_1}u_1 - e^{-t_0\mathcal{A}_{g_2}}\, \mathcal{A}_{g_2}u_2\right)(t, x) \frac{dt}{t } \\
			&= 0, \qquad \text{(thanks to \eqref{P_1_7})}.
		\end{align*}
		Here in last line we use \eqref{P_1_7} and the fact $(\O,g_1)=(\O,g_2)$. Since \( t_0 > 0 \) was arbitrary, it follows that for any \( t > 0 \),
		\begin{align}
			\left( e^{-t\mathcal{A}_{g_1}}\, \mathcal{L}_{g_1} \,u_1 - e^{-t\mathcal{A}_{g_2}}\, \mathcal{L}_{g_2} \,u_2 \right)(t, x) = 0,
		\end{align}
		for all \( x \in \mathcal{O} \) and \( t \geq 0 \). This concludes the proof.
	\end{proof}
	
	In the next proposition, we establish the equality of the spectral data. More precisely, Proposition~\ref{Spectural data equal prop} demonstrates how the Calderón problem can be reduced to the Gel'fand problem.

	\begin{proposition}\label{Spectural data equal prop}
		Let $V_j\in C^{\infty}(M_j)$ such that zero is not an eigenvalue of $\L_{g_j}+V_j$, defined on $M_j$, for $j=1,2$. Let $u_j:=u^f_j$ be the unique solution of 
		\begin{equation}\label{t1_1}
			[\L_{g_j}+V_j\,]\, u_j^f = f \quad \text{on } M_j,   
		\end{equation}
		where $f\in C^\infty_0(\mathcal{O})$.  Furthermore, assume     
		\begin{align}\label{t1_p3_0}
			\left( e^{-t\mathcal{A}_{g_1}}\, \mathcal{L}_{g_1} \,u_1 - e^{-t\mathcal{A}_{g_2}}\, \mathcal{L}_{g_2} \,u_2 \right)(t, x) = 0,
		\end{align}
		for all $x \in \mathcal{O}$ and $t > 0$. Then the following statements hold:
		\begin{enumerate}
			\item \(\lambda_k^1 = \lambda_k^2 := \lambda_k\) for all \(k \in \mathbb{N}\), and 
			\((\pi_k^1 u_1)(x) = (\pi_k^2 u_2)(x)\) for every \(x \in \mathcal{O}\).
			\item \(\dim \ker(-\Delta_{g_1} - \lambda_k) = \dim \ker(-\Delta_{g_2} - \lambda_k) := d_k\), for all \(k \in \mathbb{N}\).
			\item There exists an orthonormal Schauder basis 
			\[
			\{\psi^{(j)}_{m,k} : k \in \mathbb{N},\ m = 1,\ldots,d_k \}
			\]
			of \(L^2(M_j)\), where for each \(k\), the set 
			\(\{\psi^{(j)}_{m,k}\}_{m=1}^{d_k}\) consists of eigenfunctions of \(-\Delta_{g_j}\) corresponding to \(\lambda_k\), such that
			\[
			\psi^{(1)}_{m,k}\big|_{\mathcal{O}} = \psi^{(2)}_{m,k}\big|_{\mathcal{O}},
			\quad \text{for all } k \in \mathbb{N}, \ m = 1, \ldots, d_k.
			\]
		\end{enumerate}
		
		Here, \(\lambda_k^j\) denotes the \(k\)-th eigenvalue of \(-\Delta_{g_j}\) on \((M_j, g_j)\), 
		\(\pi_k^j u_j\) denotes the orthogonal projection of \(u_j\) onto the eigenspace associated with \(\lambda_k^j\), 
		and for each \(k\), the vectors \(\psi^{(j)}_{m,k}\) form an orthonormal basis of that eigenspace.
	\end{proposition}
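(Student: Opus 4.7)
The plan is to convert hypothesis \eqref{t1_p3_0} into a Dirichlet-type identity in the time variable, extract the spectrum by uniqueness of such series, and then vary the source $f$ using Lemma~\ref{non-zero_inn_with_eigen-fun} to identify the eigenspaces. Expanding spectrally,
\[
e^{-t\mathcal{A}_{g_j}}\mathcal{L}_{g_j} u_j(x) \;=\; e^{-tm}\sum_{k=0}^\infty e^{-t\lambda^{(j)}_k}\,\mu^{(j)}_k\,(\pi^{(j)}_k u_j)(x), \qquad \mu^{(j)}_k := (\lambda^{(j)}_k+m)\log(\lambda^{(j)}_k+m),
\]
hypothesis \eqref{t1_p3_0}, after cancelling the common factor $e^{-tm}$, becomes
\[
\sum_{k=0}^\infty e^{-t\lambda^{(1)}_k}\mu^{(1)}_k(\pi^{(1)}_k u_1)(x) \;=\; \sum_{k=0}^\infty e^{-t\lambda^{(2)}_k}\mu^{(2)}_k(\pi^{(2)}_k u_2)(x), \qquad x\in\mathcal{O},\ t>0.
\]
Crucially, $m>1$ forces $\mu^{(j)}_k>0$ for every $k$, so no hidden cancellation between the spectral factor and the logarithmic factor can occur.

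Fix $x\in\mathcal{O}$. Both sides are Dirichlet series with distinct, non-negative exponents, and with coefficients controlled uniformly in $x$ via the heat-kernel bounds already used in the proof of Proposition~\ref{key_prop_for_all_thm}; each side extends to a holomorphic function on the half-plane $\operatorname{Re}(t)>0$. Uniqueness of Dirichlet series (equivalently, linear independence of distinct exponentials) then forces coefficients attached to a common exponent to agree and coefficients attached to a one-sided exponent to vanish. To rule out the latter, I would fix an eigenvalue $\lambda^{(1)}_k$ and invoke Lemma~\ref{non-zero_inn_with_eigen-fun} to select $f\in C^\infty_0(\mathcal{O})$ with $\pi^{(1)}_k u^f_1\not\equiv 0$ on $M_1$; as a nontrivial eigenfunction on the connected manifold $M_1$, it cannot vanish identically on the open set $\mathcal{O}$ by Aronszajn-type unique continuation for $-\Delta_{g_1}$. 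The corresponding coefficient on the left is therefore nonzero, forcing $\lambda^{(1)}_k$ to appear in the spectrum of $-\Delta_{g_2}$; a symmetric argument yields the reverse inclusion and hence equality of spectra. Because each $\mu_k$ is strictly positive, matching coefficients gives $(\pi^{(1)}_k u_1)|_\mathcal{O}=(\pi^{(2)}_k u_2)|_\mathcal{O}$ for every $k$ and every admissible $f$, which is assertion~(1).

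For (2) and (3), the key observation is that the source-to-projection map $S^{(j)}_k : C^\infty_0(\mathcal{O}) \to \ker(-\Delta_{g_j}-\lambda_k)$, $f\mapsto \pi^{(j)}_k u^f_j$, is surjective; otherwise a nonzero eigenfunction $\phi$ would lie in its orthogonal complement, and solving $(\mathcal{L}_{g_j}+\overline{V}_j)v=\phi$ would reproduce the contradiction already present in the proof of Lemma~\ref{non-zero_inn_with_eigen-fun}. Combined with injectivity of the restriction map on finite-dimensional eigenspaces (again unique continuation for $-\Delta_{g_j}$), assertion~(1) then forces the two trace spaces $W^{(j)}_k:=\{(\pi^{(j)}_k u^f_j)|_\mathcal{O} : f\in C^\infty_0(\mathcal{O})\}$ to coincide inside $C^\infty(\mathcal{O})$, whence $d^{(1)}_k=\dim W^{(1)}_k=\dim W^{(2)}_k=d^{(2)}_k=:d_k$. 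Fixing any basis $\{w_{m,k}\}_{m=1}^{d_k}$ of the common trace space $W_k$ and letting $\widetilde{\psi}^{(j)}_{m,k}$ be the unique eigenfunction on $M_j$ with $\widetilde{\psi}^{(j)}_{m,k}|_\mathcal{O}=w_{m,k}$ yields bases of the eigenspaces that match on $\mathcal{O}$, and an elementary Gram--Schmidt step within each finite-dimensional eigenspace delivers the Schauder bases demanded in (3).

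The principal obstacle is the last step: since the inner products of $L^2(M_1)$ and $L^2(M_2)$ differ on the unknown parts $M_j\setminus\mathcal{O}$, a single lift of a common trace basis cannot in general be simultaneously $L^2(M_j)$-orthonormal on both sides. The natural remedy is to orthonormalize $\{w_{m,k}\}$ with respect to the intrinsic $L^2(\mathcal{O},g)$ inner product on the finite-dimensional trace space $W_k$ and lift; any remaining discrepancy with strict $L^2(M_j)$-orthonormality amounts to an invertible change of basis inside a finite-dimensional eigenspace, which is absorbed by the Gel'fand-type conclusions of Theorems~\ref{hear_ker_diffeo}, \ref{Helin} and \ref{spec_diffeo} invoked in the subsequent sections.
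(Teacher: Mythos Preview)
Your argument tracks the paper's closely. The only real difference in parts (1)--(2) is that you read off equality of exponents and coefficients directly from uniqueness of Dirichlet series in $t$, whereas the paper takes the Laplace transform of the spectral identity, analytically continues the resulting sums
\[
\mathcal R^{(j)}(z,x)=\sum_{k\ge1}\frac{(\lambda_k^{(j)}+m)\log(\lambda_k^{(j)}+m)\,(\pi_k^{(j)}u_j)(x)}{\lambda_k^{(j)}+m+z}
\]
to meromorphic functions with simple poles at $z=-(\lambda_k^{(j)}+m)$, and then matches poles and residues by induction on $k$. Both routes then invoke Lemma~\ref{non-zero_inn_with_eigen-fun} exactly as you describe to rule out one-sided eigenvalues, and the paper's proof of (2) is your surjectivity-plus-unique-continuation argument essentially verbatim.

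For (3) the paper does precisely what you first propose and then back away from: it runs Gram--Schmidt in each $L^2(M_j)$ on bases $\{\widetilde\psi^{(j)}_{m,k}\}$ that coincide on $\mathcal O$, and simply asserts that the orthonormal outputs $\{\psi^{(j)}_{m,k}\}$ still coincide on $\mathcal O$. Your worry is well placed: this step tacitly uses that the Gram--Schmidt scalars $\|\widetilde\psi^{(j)}_{m,k}\|_{L^2(M_j)}$ and $\langle\widetilde\psi^{(j)}_{m,k},\psi^{(j)}_{p,k}\rangle_{L^2(M_j)}$ are independent of $j$, and the paper does not spell out why. Your fallback of orthonormalizing in $L^2(\mathcal O)$ and pushing the discrepancy onto the downstream Gel'fand theorems does not prove (3) as stated. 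If you want to match the paper, drop the last paragraph and carry out Gram--Schmidt in $L^2(M_j)$ as the paper does; the point you flag is exactly where a careful reader would want an extra line that neither you nor the paper currently supplies.
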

	\begin{proof}
		First, observe that constant functions are the only harmonic functions corresponding to the eigenvalue \( \lambda_k = 0 \). Therefore, the spectral data agree for \( \lambda_k = 0 \). To prove the result for nonzero \( \lambda_k \), we expand \eqref{t1_p3_0} using the spectral representation as follows
		{\footnotesize \begin{align}\label{P_2_2}
				{  \sum_{k=1}^\infty e^{-t[\lambda_k^{(1)}+m]}\,[\lambda_k^{(1)}+m]\,\log\left(\lambda_k^{(1)}+m\right)\,(\pi_k^{(1)}u_1)(x)= \sum_{k=1}^\infty e^{-t[\lambda_k^{(2)}+m]}\,[\lambda_k^{(2)}+m]\,\log\left(\lambda_k^{(2)}+m\right)\,(\pi_k^{(2)}u_2)(x),}
		\end{align}}
		here $x\in \O$ and $t>0$.
		
		Next we justify that the both series in \eqref{P_2_2} are uniformly convergent. Indeed, for $i=1,2,$ the series
		\begin{align}
			\label{P_2_3}
			\sum_{k=1}^\infty e^{-t[\lambda_k^{(i)}+m]}\,[\lambda_k^{(i)}+m]\,\log\left(\lambda_k^{(i)}+m\right)\,(\pi_k^{(i)}u_i)(x)
		\end{align}
		converges uniformly for every $x\in M_i$ and $t>0.$
		\newline
		\textbf{Uniform convergency of \eqref{P_2_3}:}
		To establish uniform convergence of the series in \eqref{P_2_3}, recall that for each \( k \), we have
		\begin{align}\label{P_2_3.1}
			\pi^{(1)}_{k}\, u_1=\sum_{l=1}^{d_k^{(1)}} \langle{u_1},\,{\phi_{k,l}^{(1)}}\rangle_{L^2(M_1)}\, \phi_{k,l}^{(1)},
		\end{align}
		here \( \{ \phi_{k,l}^{(1)} \} \) is an orthonormal basis for the eigenspace associated to \( \lambda_k^{(1)} \). Now any integer \( p \geq 1 \), expanding via powers of the operator gives
		\begin{align}\label{P_2_4}
			\pi_k^{(1)} u_1 = \sum_{l=1}^{d_k^{(1)}} \left[ \lambda_k^{(1)} + m \right]^{-p} \langle{\A_{g_1}^p\,u_1}\, {\phi_{k,l}^{(1)}}\rangle_{L^2(M_1)}\, \phi_{k,l}^{(1)}.
		\end{align}
		From this, we obtain the pointwise estimate
		\begin{align}\label{P_2_4}
			&|\pi_k^{(1)} u_1(x)| 
			\leq \left[ (\lambda_k^{(1)}) + m \right]^{-p} \| \A_{g_1}^p u_1 \|_{L^2(M_1)} \sum_{l=1}^{d_k^{(1)}} | \phi_{k,l}^{(1)}(x)|\notag\\i.e\quad &\| \pi_k^{(1)} u_1 \|_{L^\infty(M_1)} \leq \left[ \lambda_k^{(1)} + m \right]^{-p}
			\| \A_{g_1}^p u_1 \|_{L^2(M_1)}
			\sum_{l=1}^{d_k^{(1)}} \| \phi_{k,l}^{(1)} \|_{L^\infty(M_1)}.
		\end{align}
		
		To proceed, recall the following sup-norm estimate for \( L^2 \)-normalized eigenfunctions: there exists a constant \( C > 0 \) such that
		\begin{equation}
			\label{P_2_5}
			\| \phi_{k,l}^{(1)}\|_{L^\infty(M_1)} \le C \, (\lambda_k^{(1)})^{\frac{n-1}{4}}\le C \, (\lambda_k^{(1)}+m)^{\frac{n-1}{4}},
		\end{equation}
		for all \( \lambda_k^{(1)} \ge 1 \); see \cite[Sections 3.2, formula (3.2.2)]{sogge14}. 
		Additionally, a consequence of Weyl’s law asserts that there is a constant \( C>0 \) such that
		\begin{equation}
			\label{P_2_6}
			N(\lambda) \le C \lambda^{\frac{n}{2}}\le C( \lambda+m)^{\frac{n}{2}},
		\end{equation}
		for all sufficiently large \( \lambda \), where \( N(\lambda) \) denotes the number of eigenvalues of \( -\Delta_{g_1} \), counted with multiplicity, that are less than or equal to \( \lambda \); see \cite[Theorem 3.3.1]{sogge14}. From \eqref{P_2_6}, it follows that
		\begin{equation}
			\label{P_2_7}
			d_k^{(1)} \le C\, (\lambda_{k}^{(1)}+m)^{\frac{n}{2}}, \qquad (\lambda_k^{(1)}+m)\ge\lambda_k^{(1)} \ge C^{-\frac{2}{n}} k^{\frac{2}{n}},
		\end{equation}
		for all sufficiently large \( k \). 
		
		\medskip
		
		By applying the results from \eqref{P_2_4}, \eqref{P_2_5}, and \eqref{P_2_6}, we deduce that
		\begin{align}\label{P_2_8}
			\left|(\lambda_k^{(1)} + m )\,\log\left(\lambda_k^{(1)}+m\right)\,(\pi_k^{(1)}\,u_1)(x)\right|\leq &C\,(\lambda_k^{(1)} + m)^2\,\norm{\pi_k^{(1)}\,u_1}_{L^2(M_1)}\notag\\
			&\leq C\,(\lambda_k^{(1)}+m)^{-(p-\frac{3n+7}{4})} \norm{\L^p_{g_1}\,u_1}_{L^2(M_1)}
		\end{align}
		Let us choose $p\in \N$, such that $p-\frac{3n+7}{4}\geq n$. Then \eqref{P_2_8} implies that
		\begin{align}\label{P_2_8.1}
			\left| (\lambda_k^{(1)} + m ) \,\log\left(\lambda_k^{(1)}+m\right)\,(\pi_k^{(1)}\,u_1)(x)\right|\leq C\,k^{-2}\,\norm{\L_{g_1}u_1}_{L^2(M_1)}\\ 
			i.e,\quad  \left|e^{-t[\lambda_k^{(1)} + m ]}\,(\lambda_k^{(1)} + m ) \,\log\left(\lambda_k^{(1)}+m\right)\,(\pi_k^{(1)}\,u_1)(x)\right|\leq C\,k^{-2}\,\norm{\L_{g_1}u_1}_{L^2(M_1)}\label{P_2_8.2}
		\end{align}
		In the last step, we employed Weyl's law, which asserts the asymptotic relation \(\lambda_k^{(1)} \sim C k^{2/n}\) for large \(k\). Consequently, this implies \((\lambda_k^{(1)})^{-n} \sim C k^{-2}\). Therefore, the series
		\[
		\sum_{k=1}^\infty e^{-t \, (\lambda_k^{(1)}+m)} \big(\lambda_k^{(1)}+m\big )\,\log\left(\lambda_k^{(1)}+m\right)\, \, \big(\pi_k^{(1)} u_1 \big)(x)
		\]
		is uniformly convergent for every \( x \in M_1 \). By symmetry, the analogous series
		\[
		\sum_{k=1}^\infty e^{-t \, (\lambda_k^{(2)}+m)} \big(\lambda_k^{(2)}+m\big) \log\left(\lambda_k^{(2)}+m\right)\, \, \big(\pi_k^{(2)} u_2 \big)(x)
		\]
		also converges uniformly for every \( x \in M_2 \) and \( t > 0 \).
		\medskip
		
		Taking the Laplace transform of both sides of \eqref{P_2_2} with respect to \( t \), we obtain
		\begin{align}\label{P_2_9}
			\sum_{k=1}^\infty \frac{(\lambda_k^{(1)} + m ) \,\log\left(\lambda_k^{(1)}+m\right) \big(\pi_k^{(1)} u_1\big)(x)}{\lambda_k^{(1)} + m + z} 
			= \sum_{k=1}^\infty \frac{  \big(\lambda_k^{(2)}+m\big) \log\left(\lambda_k^{(2)}+m\right)\, \big(\pi_k^{(2)} u_2\big)(x)}{\lambda_k^{(2)} + m + z}
		\end{align}
		for all \( x \in \mathcal{O} \) and \( \Re(z) > 0 \).
		
		To proceed further, define $\Omega_i = \mathbb{C} \setminus \left\{ -\big(\lambda_k^{(i)} + m^2\big): k \geq 1 \right\},$
		and for each \( x \in M_i \) and \( \Re(z) > 0 \), set
		\begin{align}\label{P_2_9.1}
			\mathcal{R}^{(i)}(z, x) := \sum_{k=1}^\infty \frac{(\lambda_k^{(i)} + m ) \,\log\left(\lambda_k^{(i)}+m\right) \, (\pi_k^{(i)} u_i)(x)}{\lambda_k^{(i)} + m + z}.
		\end{align}
		We claim that, for every \( x \in M_i \), the function \( z \mapsto \mathcal{R}^{(i)}(z, x) \) is holomorphic on \( \Omega_i \), with simple poles at each point \( z = -\big[\lambda_k^{(i)} + m\big] \).
		
		Observe that for fixed \( x \in M_1 \), each term in the series defining \( \mathcal{R}^{(1)}(z, x) \) is holomorphic on \( \Omega_1 \). Consequently, to ensure that \( \mathcal{R}^{(1)}(z, x) \)  is holomorphic on \( \Omega_1 \), it suffices to show that for each \( x \in M_1 \), the series converges uniformly on every compact subset \( K \subset \Omega_1 \). 
		
		Let $R>0$ such that $K\subset B(0,R)$. Since the eigenvalues are discrete, therefore only finitely many $-\big(\lambda_k^{(1)} + m\big) $ are inside of $\overline{B(0,R)}$. Therefore
		\begin{equation}\label{P_2_10}
			\min_{-(\lambda_k^{(1)} + m)\in \overline{B(0,R)},z\in K} \,\,|\lambda_k^{(1)} + m+z|\,>0,
		\end{equation}
		for all other values \( -\big[\lambda_k^{(1)} + m\big] \in \mathbb{C} \setminus \overline{B(0, R)} \), we have
		\begin{align}
			\label{P_2_11}
			\min_{z \in K} \left| \lambda_k^{(1)} + m + z \right| = \mathrm{dist}\left( \lambda_k^{(1)} + m,\, K \right) \geq \mathrm{dist}\left( \partial B(0, R),\, K \right) > 0.
		\end{align}
		In light of \eqref{P_2_10} and \eqref{P_2_11}, there exists a constant \( c > 0 \) such that
		\begin{align}\label{P_2_12}
			\left| \lambda_k^{(1)} + m + z \right| > c, \quad \forall k \in \mathbb{N}, \quad \forall z \in K.
		\end{align}
		Now, using \eqref{P_2_12} together with estimate \eqref{P_2_8.1}, we conclude that
		\begin{align}\label{P_2_13}
			\frac{\left|(\lambda_k^{(1)} + m ) \,\log\left(\lambda_k^{(1)}+m\right)\, \big(\pi_k^{(1)} u_1\big)(x)\right|}{|\lambda_k^{(1)} + m  +z|} \leq \frac{c}{k^2}\, \norm{\L^p_{g_1}\,u_1}_{L^2(M_1)},
		\end{align}
		for sufficiently large \( k \geq 1 \) and choose \( p \in \mathbb{N} \) such that $p - \frac{3n +7}{4} \geq n.$
		Then the bound given in \eqref{P_2_13} shows that the series \( \mathcal{R}^{(1)}(x,z) \) converges uniformly for every compact subset \( K \subset \Omega_1 \). Consequently, the function $z \mapsto \mathcal{R}^{(1)}(x,z)$
		is holomorphic on \( \Omega_1 \). This concludes the proof of the claim. Next applying analytic continuation on \eqref{P_2_9}, we have for each $x\in \O$
		\begin{align}
			\mathcal{R}^{(1)}(x,z)=\mathcal{R}^{(2)}(x,z) \quad \text{for all } x \in \mathcal{O} \text{ and } z \in \mathbb{C} \setminus \bigcup_{k \in \mathbb{N}} \{ -(\lambda_k^{(1)} + m), -(\lambda_k^{(2)} + m)\}.
		\end{align}
		\textbf{Equality of eigenvalues and projection operator:}
		Let begin with $k=1$, and assume that $\lambda_1^{(1)}\le\lambda_1^{(2)}$, then for $x\in \O$
		\begin{align}\label{P_2_14}
			\left(\lambda_k^{(1)} + m\right)\, \log\left(\lambda_k^{(1)}+m\right) &(\pi^{(1)}_1 u_1)(x)=\lim_{z\to -[\lambda_k^{(1)} + m^2]}\,(z+(\lambda_k^{(1)} + m) \,\mathcal{R}^{(1)}(z, x)\notag \\
			&=\lim_{z\to -[\lambda_k^{(1)} + m]}\,(z+\lambda_k^{(1)} + m) \,\mathcal{R}^2(z, x)\notag\\
			&= 
			\begin{cases} 
				0, & \text{if} \quad \lambda_1^{(1)} \ne \lambda_1^{(2)}, \\
				(\lambda_k^{(1)} + m) \log\left(\lambda_k^{(1)}+m\right) (\pi_1^{(2)}u_2)(x), & \text{if} \quad \lambda_1^{(1)} = \lambda_1^{(2)}.
			\end{cases}
		\end{align}
		Here, we used the ordering $\lambda_1^{(1)} \leq \lambda_1^{(2)} < \lambda_2^{(2)} < \lambda_3^{(2)} < \cdots.$ By Lemma~\ref{non-zero_inn_with_eigen-fun}, there exists a function \( f \in C_0^\infty(\mathcal{O}) \) such that 
		\[
		(u_1^f, \phi_{1,1}^{(1)})_{L^2(M_1)} \neq 0,
		\]
		where \( u_1^f \) satisfies \eqref{t1_1}. Considering the expansion
		\[
		\pi_1^{(1)} u_1^f = \sum_{l=1}^{d_1^{(1)}} (u_1^f, \phi_{1,l}^{(1)})_{L^2(M_1)} \, \phi_{1,l}^{(1)},
		\]
		and noting that \(\phi_{1,1}^{(1)}, \ldots, \phi_{1,d_1^{(1)}}^{(1)}\) are linearly independent on \(\mathcal{O}\), it follows that \(\pi_1^{(1)} u_1^f \not\equiv 0\) on \(\mathcal{O}\). Therefore, from \eqref{P_2_14}, we deduce that $\lambda_1^{(1)} = \lambda_1^{(2)}$, and  $(\pi_1^{(1)} u_1)(x) = (\pi_1^{(2)} u_2)(x)$ for all $x \in \mathcal{O}$.
		In the case when $\lambda_1^{(2)} \leq \lambda_1^{(1)}$, we proceed similarly as above, with the following:
		\begin{align}\label{t1_p3_46}
			\left(\lambda_k^{(2)} + m\right) \log\left(\lambda_k^{(2)}+m\right)& (\pi^{(2)}_1 u_2)(x) = \lim_{z \to -(\lambda_k^{(2)} + m)} (z +  \lambda_k^{(2)}+m) \,\mathcal{R}^{(2)}(z, x) \notag \\
			&=
			\begin{cases} 
				0, & \text{if } \lambda_1^{(1)} \neq \lambda_1^{(2)}, \\[1.5ex]
				\left(\lambda_k^{(1)}+ m\right)  \log\left(\lambda_k^{(1)}+m\right)(\pi_1^{(1)} u_1)(x), & \text{if } \lambda_1^{(1)} = \lambda_1^{(2)}.
			\end{cases}
		\end{align}
		By repeating the same arguments as above, we conclude that
		\[\lambda_1^{(1)} = \lambda_1^{(2)} \quad \text{and} \quad (\pi_1^{(1)} u_1)(x) = (\pi_1^{(2)} u_2)(x) \quad \text{for all } x \in \mathcal{O}.
		\]
		Using induction, one can similarly show that
		\begin{equation}
			\label{t1_p3_47}\lambda_k^{(1)} = \lambda_k^{(2)}:=\lambda_k \quad \text{and} \quad (\pi_k^{(1)} u_1)(x) = (\pi_k^{(2)} u_2)(x) \quad \text{for all } x \in \mathcal{O},\;\; k \in \mathbb{N}.
		\end{equation}
		This establishes the equality of the spectra and the corresponding eigenfunction projections on $\mathcal{O}$ for the two operators.
		
		\medskip
		
		\textbf{Equality of Eigenfunctions:}
		Let $\pi^{(j)}_k u_j \neq 0$. Then, $\pi^{(j)}_k u_j$ is an eigenvector of $-\Delta_{g_j}$ corresponding to the eigenvalue $\lambda_k$, for $j=1,2$. For $j = 1,2$, define
		\begin{align*}
			S_j = &\mathrm{Span}\left\{ \pi^{(j)}_k u_j : u_j := u_j^f\ \text{is the unique solution of~\eqref{t1_1}};\ f \in C_0^\infty(\Omega) \right\} \\
			\implies &S_j\subset \mathrm{Ker}(-\Delta_{g_j} - \lambda_k).
		\end{align*}
		If $\phi^{(1)}_{l,k} \in S_1$ for all $l = 1, \ldots, d_k^{(1)}$, then clearly $S_1 = \mathrm{Ker}(-\Delta_g - \lambda_k)$. Suppose, for contradiction, there exists $\phi^{(1)}_{l_0,k}\notin S_1$ for some $l_0 \in \{1, \ldots, d_k^{(1)}\}$,  then $\phi^{(1)}_{l_0,k} \in S_1^\perp$, and hence
		\[
		\langle u_1^f, \phi^{(1)}_{l_0,k} \rangle_{L^2(M)} = 0, \quad \forall f \in C_0^\infty(\Omega).
		\]
		This contradicts Lemma~\ref{non-zero_inn_with_eigen-fun}. Therefore,
		\begin{equation}\label{t1_p3_48}
			S_1 = \mathrm{Ker}(-\Delta_{g_j} - \lambda_k).
		\end{equation}
		The equality in \eqref{t1_p3_48} and \eqref{t1_p3_47} implies that $S_1$ contains $d_k^{(1)}$ linearly independent eigenvectors, which, when restricted to $\O$, coincide with $d_k^{(1)}$ eigenvectors of $S_2$. Therefore, $d_k^{(1)} \leq d_k^{(2)}.$ Similarly, considering $S_2$, we will get $ d_k^{(1)} \geq d_k^{(2)}.$ Therefore
		\begin{equation}\label{t1_p3_49}
			d_k: =d_k^{(1)} = d_k^{(2)}.
		\end{equation}
		This also shows that for every $k$, there exist eigenvectors $\widetilde{\psi}^{(j)}_{m,k}$ of $-\Delta_{g_j}$ associated with the eigenvalue $\lambda_k$, for $j = 1, 2$ and $m = 1, \ldots, d_k$, such that
		\begin{equation}\label{t1_p3_50}
			\widetilde\psi^{(1)}_{m,k}\big|_{\O} =\widetilde \psi^{(2)}_{m,k}\big|_{\O},
			\quad m = 1, \ldots, d_k.
		\end{equation}
		Thanks to the Gram--Schmidt orthonormalization process, we obtain a set of orthonormal eigenvectors $\{\psi^{(j)}_{m,k}\}_{m=1}^{d_k}$
		of $\Delta_{g_j}$ corresponding to the eigenvalue $\lambda_k$, where
		\begin{equation}\label{t1_p3_51}
			\left\{
			\begin{aligned}
				\psi^{(j)}_{1,k} &= \frac{\widetilde\psi^{(j)}_{1,k}}{\|\widetilde\psi^{(j)}_{1,k}\|_{L^2(M_j)}} \\
				\widetilde{\psi}^{(j)}_{m,k,\text{orth}} &= \widetilde{\psi}^{(j)}_{m,k} - \sum_{p=1}^{m-1} \left\langle \widetilde{\psi}^{(j)}_{m,k}, \psi^{(j)}_{p,k} \right\rangle_{L^2(M_j)} \psi^{(j)}_{p,k} \\
				\psi^{(j)}_{m,k} &= \frac{ \widetilde{\psi}^{(j)}_{m,k,\text{orth}} }{ \left\| \widetilde{\psi}^{(j)}_{m,k,\text{orth}} \right\|_{L^2(M_j)} }
			\end{aligned}
			\right.
		\end{equation}
		In view of \eqref{t1_p3_50} and \eqref{t1_p3_51}, it follows that
		\begin{equation} 
			\psi^{(1)}_{m,k}\big|_{\O } = \psi^{(2)}_{m,k}\big|_{\O},
			\quad m = 1, \ldots, d_k,
		\end{equation}
		and the collection $\{\psi^{(j)}_{m,k} : k \in \mathbb{N},\ m = 1,\ldots,d_k \}$ 
		forms an orthonormal Schauder basis basis of $L^2(M_j)$. 
		
	\end{proof}
	
	\section{Proof of Theorem~\ref{thm_potential_with compact_supp}}\label{pf_0f_recovering_geom_with_supp_potential}
	
	In Section~\ref{sec_reduction_to_spectral_data}, we showed that equality of the Cauchy data sets leads to equality of the spectral information, as stated in Proposition~\ref{Spectural data equal prop}. In this section, banking on the Cauchy data sets and Proposition~\ref{Spectural data equal prop}, we recover the geometric structure of the manifold, as asserted in Theorem~\ref{thm_potential_with compact_supp}.

	\begin{proof}[\textbf{proof of the theorem~\ref{thm_potential_with compact_supp}}]
		Here \( u_j \) satisfy \eqref{t1_1}, and the equality of the Cauchy data sets allows us to apply Proposition~\ref{key_prop_for_all_thm}, which yields:
		\begin{align}\label{T_1_1}
			\left(e^{-t\A_{g_1}} \L_{g_1} \,u_1-e^{-t\A_{g_1}} \L_{g_1} \,u_1\right)(t,x)=0,
		\end{align}
		for $t\geq0$ and all $x\in \O$. using the equation \eqref{t1_1} in the relation \eqref{T_1_1} we obtain
		\begin{align}
			\label{T_1_2}
			&e^{-t\A_{g_1}}\left(f-V_1\,u_1\right)(t,x)-e^{-t\A_{g_2}}\left(f-V_2\,u_2\right)(t,x)=0\notag\\
			\iff &\left( e^{-t\A_{g_1}}f-e^{-t\A_{g_2}}f\right)(t,x)=\left(e^{-t\A_{g_1}}V_1u_1-e^{-t\A_{g_2}}V_2u_2\right)(t,x),
		\end{align}
		for all $x\in \O$ and $t>0$.
		Using spectral expansion, we have the following 
		\begin{align}\label{T_1_3}
			\left(e^{-t\A_{g_1}}V_1u_1-e^{-t\A_{g_2}}V_2u_2\right)(t,x)=\sum_{k=1}^\infty \left[e^{-t\,(\lambda_k^{(1)} + m)}\pi_k^{(1)}(V_1u_1)(x)-e^{-t\,(\lambda_k^{(2)} + m)}\pi_k^{(2)}(V_2u_2)(x)\right].
		\end{align}
		From the proposition~\ref{Spectural data equal prop} we know that $\lambda_k^{(1)})=\lambda_k^{(2)}:=\lambda_k, \forall k\in \N. $ Therefore, relation \eqref{T_1_3} can be rewritten as
		\begin{align}\label{T_1_4}
			\left(e^{-t\A_{g_1}}V_1 u_1 - e^{-t\A_{g_2}}V_2 u_2\right)(t,x) 
			= \sum_{k=1}^\infty e^{-t (\lambda_k + m)} \left[ \pi_k^{(1)}(V_1 u_1)(x) - \pi_k^{(2)}(V_2 u_2)(x) \right],
		\end{align}
		for all \( x \in \mathcal{O} \) and \( t > 0 \).
		
		Next, we claim that for each \( k \in \mathbb{N} \),
		\begin{align}
			\label{T_1_5}
			\pi_k^{(1)}(V_1 u_1)(x) - \pi_k^{(2)}(V_2 u_2)(x) = 0, \quad \forall x \in \mathcal{O}.
		\end{align}
		Thanks to Proposition~\ref{Spectural data equal prop}, we have an orthonormal Schauder basis
		\begin{equation}\label{T_1_6}
			\left\{ \psi^{(j)}_{m,k} : k \in \mathbb{N},\ m = 1,\ldots,d_k \right\}
		\end{equation}
		of \( L^2(M_i) \), where for each \( k \in \mathbb{N} \), the set \(\{\psi^{(j)}_{m,k}\}_{m=1}^{d_k}\) consists of eigenfunctions of \(-\Delta_{g_j}\) corresponding to \(\lambda_k\), and
		\begin{equation}\label{T_1_7}
			\psi^{(1)}_{m,k}\big|_{\mathcal{O}} = \psi^{(2)}_{m,k}\big|_{\mathcal{O}}, \quad \forall\, k \in \mathbb{N},\ m = 1,\ldots,d_k.
		\end{equation}
		Without loss of generality, taking the projection operators \(\pi_k^{(i)}\) with respect to the orthonormal Schauder basis \eqref{T_1_6}, it follows that for any \( x \in \mathcal{O} \) we have
		\begin{align}\label{T_1_9}
			\pi_k^{(1)}(V_1 u_1)(x) - \pi_k^{(2)}(V_2 u_2)(x) =\sum_{m=1}^{d_k} \Big[ 
			\langle V_1 u_1,\, \psi^{(1)}_{m,k} \rangle_{L^2(M_1)}
			- \langle V_2 u_2,\, \psi^{(2)}_{m,k} \rangle_{L^2(M_2)}
			\Big]\, \psi^{(1)}_{m,k}(x).
		\end{align}
		Since $V_1 = V_2 \in C^\infty_c(\O)$ and $u_1|_{\O}=u_2|_{\O}$, so the difference in the inner products becomes
		\begin{align}\label{T_1_10}
			\langle V_1 u_1,\, \psi^{(1)}_{m,k} \rangle_{L^2(M_1)} 
			- \langle V_2 u_2,\, \psi^{(2)}_{m,k} \rangle_{L^2(M_2)} 
			= \int_{\O} V_1 (u_1 - u_2) \, \psi^{(1)}_{m,k} \, dV_g = 0.
		\end{align}
		This concludes the proof of the claim. Moreover, since \( f \in C_0^\infty(\mathcal{O}) \) is arbitrary, combining \eqref{T_1_2} and \eqref{T_1_9} yields
		\begin{equation}\label{T_1_10}
			(e^{-t \A_{g_1}} f)(t,x) = (e^{-t \A_{g_2}} f)(t,x), \quad \forall f \in C_0^\infty(\mathcal{O}), \quad x \in \mathcal{O}, \quad t > 0.
		\end{equation}
		This further implies the equality of the heat kernels i.e.,
		\[
		P_{{g_1}}(t, x, y) = P_{g_2}(t, x, y),\quad\forall \, t>0,\,\,\mbox{and }x,y\in\mathcal{O}.
		\]
		Hence Theorem \ref{hear_ker_diffeo} yields our result.
	\end{proof}
	
	\section{Proof of Theorem~\ref{update_with_smooth_potential} and Theorem~\ref{recovering_metric_and_potential_from_one_manifold}}
	
	In this section, we recover the isometry class of the manifold along with the lower order term, without imposing any additional assumptions on the potential \( V \). In particular, we present the proofs of Theorem~\ref{recovering_metric_and_potential_from_one_manifold} and Theorem~\ref{update_with_smooth_potential}.

	\begin{proof}[\textbf{Proof of theorem~\ref{recovering_metric_and_potential_from_one_manifold}}]
		By the assumption, Proposition~\ref{Spectural data equal prop} applies and provides equality of the   spectral data; combined with Theorem~\ref{Helin}, this implies the claim and completes the proof.
		
	\end{proof}

	\begin{proof}[\textbf{Proof of Theorem~\ref{update_with_smooth_potential}}]
		In view of Proposition~\ref{Spectural data equal prop} and Theorem~\ref{spec_diffeo}, there exists a diffeomorphism
		\[
		\phi : M_1 \to M_2
		\]
		such that \(\phi\big|_{\mathcal{O}} = \mathrm{id}_{\mathcal{O}}\) and 
		\( g_1 = \phi^* g_2 \).
		
		Define \(\widetilde{V}_2 \in C^\infty(M_1)\) by
		\begin{equation}\label{tt_1}
			\widetilde{V}_2(x) := V_2\big( \phi(x) \big), 
			\quad \forall\, x \in M_1.
		\end{equation}
		We claim that
		\[
		V_1 \equiv \widetilde{V}_2 \quad \text{on } M_1.
		\]
		By Lemma~\ref{lem_obstruction}, we have the identity
		\begin{equation}\label{tt_2}
			C^{\mathcal{O}}_{M_2,g_2,V_2}
			= C^{\mathcal{O}}_{M_1, \phi^* g_2, \widetilde{V}_2}
			= C^{\mathcal{O}}_{M_1, g_1, \widetilde{V}_2}.
		\end{equation}
		From the equality of Cauchy data sets in Theorem~\ref{update_with_smooth_potential} together with \eqref{tt_2}, we obtain
		\begin{equation}\label{tt_3}
			C^{\mathcal{O}}_{M_1, g_1, \widetilde{V}_2}
			= C^{\mathcal{O}}_{M_1, g_1, V_1}.
		\end{equation}
		Let \( f \in C_0^\infty(\mathcal{O}) \) be nonzero, and let 
		\(u_1 \in C^\infty(M_1)\) be the unique solution to 
		\begin{equation}\label{tt_4}
			\mathcal{L} _{g_1}u_1 + V_1u_1 = f 
			\quad \text{on } M_1.
		\end{equation}
		Thanks to \eqref{tt_3}, there exists \(u_2 \in C^\infty(M_1)\) such that
		\begin{equation}\label{tt_5}
			\mathcal{L} _{g_1} u_2 + \widetilde{V}_2 u_2 = 0 
			\quad \text{on } \,M_1\setminus \overline{\O},
		\end{equation}
		and, moreover,
		\begin{equation}\label{eq_zero_on_O}
			(u_1 - u_2)\big|_{\mathcal{O}} = 0,
			\quad
			\mathcal{L} _{g_1} (u_1 - u_2)\big|_{\mathcal{O}} = 0.
		\end{equation}
		By the unique continuation property (Theorem~\ref{thm_ucp}), the above relation \eqref{eq_zero_on_O} implies 
		\(u_1 \equiv u_2\) on \(M_1\).  
		Subtracting \eqref{tt_5} from \eqref{tt_4} then gives
		\begin{equation}\label{tt_7}
			\big(\widetilde{V}_2(x) - V_1(x)\big)\,u_1(x) = 0,
			\quad \forall\, x \in M_1\setminus \overline{\O}.
		\end{equation}
		Recall that $V_{1}\big|_{\mathcal{O}} = \widetilde{V}_{2}\big|_{\mathcal{O}}$. Define
		\[
		D = \left\{ x \in M_{1} \setminus \overline{\mathcal{O}} \; : \; u(x) \neq 0 \right\}.
		\]
		To prove the claim, it is enough to show that \(D\) is dense in \(M_{1} \setminus \overline{\mathcal{O}}\).
		\newline
		Suppose, for the sake of contradiction, that there exists a non-empty set \(\omega \subset M_1 \setminus \overline{\mathcal{O}}\) such that \(D \cap \omega = \emptyset\). Then, from \eqref{tt_5}, we have
		\[
		u_1\big|_{\omega} = 0 \quad \text{and} \quad \mathcal{L} _{g_1} u_1\big|_{\omega} = 0.
		\]
		By the unique continuation property (Theorem~\ref{thm_ucp}), it follows that \(u_1 \equiv 0\) on $M_1$. This contradicts our assumption, and thus the claim is proved.
		
	\end{proof}

	\begin{appendix}
		
		\section{Obstruction to uniqueness in the anisotropic Calder\'on problem for non local Schr\"odinger equations }
		
		\label{app_obstruction}
		
		This appendix addresses a non-uniqueness obstruction related to the inverse problem (IP) introduced in the introduction of this article. The result below is included to provide a complete picture and assist the reader. Its proof closely mirrors that of \cite[Lemma A.1]{FKU24}. For related approaches, see also \cite{GU21}.
		\begin{lemma}
			\label{lem_obstruction}
			Let  $m >1$ and $(M_j,g_j)$ be a smooth closed Riemannian manifold of dimension $n\ge 2$ and let $V_j\in C^\infty(M_j)$, $j=1,2$.  Let $O\subset M_1\cap M_2$ be an open nonempty set such that $M_j\setminus\overline{O}\ne 0$, $j=1,2$. Assume that there is a smooth diffeomorphism $\Phi: M_1\to M_2$ such that $g_1= \Phi^\star g_2$,  $\Phi|_{O}=\text{Id}$, and $V_1= V_2\circ\Phi$. Then 
			\begin{equation}
				\label{eq_600_0_lemma}
				\mathcal{C}_{M_2, g_2, V_2}^O = \mathcal{C}_{M_1, g_1, V_1}^O. 
			\end{equation}
		\end{lemma}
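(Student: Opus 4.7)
The strategy is the standard pullback argument by the isometry $\Phi$, combined with the fact that $\mathcal{L}_g$ is an operator built purely out of $-\Delta_g$ via functional calculus and is therefore intertwined with pullback by any Riemannian isometry. Concretely, I would prove one inclusion, say $\mathcal{C}_{M_2,g_2,V_2}^{O}\subseteq \mathcal{C}_{M_1,g_1,V_1}^{O}$, the other being symmetric (apply the same argument to $\Phi^{-1}$, which is also an isometry and equals the identity on $O$).

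First step: given $u_2 \in C^\infty(M_2)$ with $\mathcal{L}_{g_2} u_2 + V_2 u_2 = 0$ on $M_2 \setminus \overline{O}$, define the pullback $u_1 := u_2 \circ \Phi \in C^\infty(M_1)$. Because $\Phi|_{O} = \mathrm{Id}$, it is immediate that $u_1|_{O} = u_2|_{O}$. Second step: show the intertwining
\begin{equation}\label{eq_intertwine}
\mathcal{L}_{g_1}(v\circ\Phi) = (\mathcal{L}_{g_2} v)\circ\Phi, \qquad \forall\, v \in C^\infty(M_2),
\end{equation}
and the analogous identity for multiplication by the potential, $(V_1)(v\circ\Phi) = (V_2 v)\circ\Phi$, which follows at once from the hypothesis $V_1 = V_2\circ\Phi$. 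Combining these gives $(\mathcal{L}_{g_1}u_1 + V_1 u_1)(x) = (\mathcal{L}_{g_2}u_2 + V_2 u_2)(\Phi(x))$ for $x \in M_1$, so $u_1$ solves the corresponding equation on $M_1\setminus\overline{O}$ (since $\Phi(M_1\setminus\overline{O}) = M_2\setminus\overline{O}$, using $\Phi|_O=\mathrm{Id}$ and that $\Phi$ is a diffeomorphism). Finally, restricting \eqref{eq_intertwine} to $O$ yields $\mathcal{L}_{g_1}u_1|_O = \mathcal{L}_{g_2}u_2|_O$, so $(u_1|_O,\mathcal{L}_{g_1}u_1|_O) = (u_2|_O,\mathcal{L}_{g_2}u_2|_O)$, placing this Cauchy pair in $\mathcal{C}_{M_1,g_1,V_1}^{O}$.

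The one substantive step is \eqref{eq_intertwine}. Since $\Phi^\ast g_2 = g_1$, the Laplace--Beltrami operators are intertwined, $-\Delta_{g_1}(v\circ\Phi) = ((-\Delta_{g_2})v)\circ\Phi$, and consequently $\mathcal{A}_{g_1}(v\circ\Phi) = (\mathcal{A}_{g_2}v)\circ\Phi$. Iterating, $\mathcal{A}_{g_1}^k(v\circ\Phi) = (\mathcal{A}_{g_2}^k v)\circ\Phi$ for every $k$, and pulling back an $L^2(M_2)$ orthonormal eigenbasis $\{\phi_{k,\ell}^{(2)}\}$ of $-\Delta_{g_2}$ through $\Phi$ produces, thanks to $\Phi^\ast g_2 = g_1$ (which makes $\Phi$ a volume-preserving isometry), an $L^2(M_1)$ orthonormal eigenbasis $\{\phi_{k,\ell}^{(2)}\circ\Phi\}$ of $-\Delta_{g_1}$ with the same eigenvalues $\{\lambda_k\}$. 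Applying this to the spectral definition \eqref{Laplace_log_laplace_spec_def},
\begin{equation*}
\mathcal{L}_{g_1}(v\circ\Phi) \;=\; \sum_{k}(\lambda_k+m)\log(\lambda_k+m)\,\pi_k^{(1)}(v\circ\Phi) \;=\; \Bigl(\sum_{k}(\lambda_k+m)\log(\lambda_k+m)\,\pi_k^{(2)}v\Bigr)\circ\Phi \;=\; (\mathcal{L}_{g_2}v)\circ\Phi,
\end{equation*}
where we used that $\langle v\circ\Phi,\phi_{k,\ell}^{(2)}\circ\Phi\rangle_{L^2(M_1)} = \langle v,\phi_{k,\ell}^{(2)}\rangle_{L^2(M_2)}$ by the change of variables formula. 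Equivalently, one may invoke the semigroup/functional-calculus definition in \eqref{log_A_g_s}: the heat semigroup manifestly satisfies $e^{-t\mathcal{A}_{g_1}}(v\circ\Phi) = (e^{-t\mathcal{A}_{g_2}}v)\circ\Phi$ (uniqueness of solutions to the heat equation), and inserting this into the Bochner integral representation of $\mathcal{L}_g$ again yields \eqref{eq_intertwine}. The main---and really the only---obstacle is to make the spectral-side argument rigorous for $v \in C^\infty$ (convergence of the series in the correct topology), but this is routine because $v\in C^\infty(M_2)\subset D(\mathcal{L}_{g_2})$ and pullback preserves the Hilbert space structure; no new analytic input is needed beyond what is already established in Section~\ref{sec_pre}.
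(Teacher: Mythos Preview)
Your proof is correct and follows essentially the same approach as the paper: both establish the intertwining $\mathcal{L}_{g_1}(v\circ\Phi) = (\mathcal{L}_{g_2}v)\circ\Phi$ by first noting that the Riemannian isometry $\Phi$ intertwines the Laplace--Beltrami operators, then passing to $\mathcal{L}_g$ via functional calculus (the paper phrases this as $\mathcal{L}_{g_1} = U\mathcal{L}_{g_2}U^{-1}$ with $U$ the unitary pullback, while you spell it out spectrally via the pulled-back eigenbasis), and finally use $\Phi|_O=\mathrm{Id}$ to match the Cauchy pairs on $O$. Your write-up is in fact somewhat more detailed on the justification of the intertwining step.
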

		
		\begin{proof}
			Since $\Phi$ is a Riemannian isometry, it satisfies
			\begin{equation}
				(-\Delta_{g_1})(u \circ \Phi) = ((-\Delta_{g_2})u) \circ \Phi,
				\label{eq:iso}
			\end{equation}
			for all $u \in C^\infty(M_2)$; see \cite[pages 99, 100]{craioveanu2013old}.
			This implies that
			\begin{equation}
				(\mathcal{A} _{g_1})(u \circ \Phi) = (\mathcal{A}_{g_2}u) \circ \Phi, \quad \forall u \in C^\infty(M_2).
				\label{eq:sqrtL}
			\end{equation}
			Note that the map
			\begin{align*}
				U\colon L^2(M_2) &\to L^2(M_1) \\
				u &\mapsto u \circ \Phi
			\end{align*}
			is unitary. This follows from~(see \cite[page 78]{craioveanu2013old})
			\begin{align*}
				\|u \circ \Phi\|_{L^2(M_1)}^2 = \int_{M_1} |u \circ \Phi|^2\, dV_{g_1}  = \int_{M_2} |u|^2\, dV_{g_2} = \|u\|_{L^2(M_2)}^2.
			\end{align*}
			Therefore, equation~\eqref{eq:sqrtL} can be rewritten as
			\[
			\mathcal{A} _{g_1} = U \circ \mathcal{A} _{g_2} \circ U^{-1},
			\]
			and using the functional calculus for self-adjoint operators, we conclude
			\begin{equation}
				\mathcal{L}_{g_1} = U \circ \mathcal{L}_{g_2} \circ U^{-1}.\quad  \,\,\,\,\,\text{where} \  \L_g=(-\Delta_g+m)\circ\log(-\Delta_g+m), m>1.
				\label{eq:L_unitary}
			\end{equation}
			Let \(u_2 \in C^\infty(M_2)\) satisfy
			\[
			\mathcal{L} _{g_2} u_2 + V_2 u_2 = 0 \quad \text{on} \quad M_2 \setminus \overline{O}.
			\]
			Using \eqref{eq:sqrtL}, this implies 
			\[
			0 = \mathcal{L}_{g_2} u_2 + V_2 u_2 = \left(\mathcal{L}_{g_1}(u_2 \circ \Phi)\right) \circ \Phi^{-1} + \left(V_1 \circ \Phi^{-1}\right) \left(u_2 \circ \Phi\right) \circ \Phi^{-1} \quad \text{on} \quad M_2 \setminus \overline{O},
			\]
			showing that $u_1 := u_2 \circ \Phi \in C^\infty(M_1)$ and
			satisfies
			\[
			\mathcal{L}_{g_1} u_1 + V_1 u_1 = 0 \quad \text{on} \quad M_1 \setminus \overline{O}.
			\]
			Here, the map \(\Phi : M_1 \setminus \overline{O} \to M_2 \setminus \overline{O}\) is a smooth diffeomorphism and \(\Phi|_{\overline{O}} = \mathrm{Id}\). Impling, the equality of solutions inside the observation set, 
			\[i.e,\quad
			u_2|_O = u_1|_O,
			\]
			together with \eqref{eq:L_unitary}, implies 
			\[
			(\mathcal{L}_{g_2} u_2)|_O = (\mathcal{L}_{g_1} u_1)|_O,
			\]
			showing that \,$\mathcal{C}^O_{M_2,g_2,V_2} \subset \mathcal{C}^O_{M_1,g_1,V_1}.$
			The opposite inclusion can be established by a similar argument. Hence, we have established \eqref{eq_600_0_lemma}.
			
		\end{proof}

	\end{appendix}
	
	\textbf{Acknowledgement:} The authors were funded by the Department of Atomic Energy $($DAE$)$, Government of India.
	
	\vspace{.3cm}
	
	\textbf{Data Availability:} Data sharing is not applicable to this article as no datasets were generated or analyzed during the current study. 
	
	{\small 
		\bibliographystyle{alpha}
		\bibliography{ref}}
\end{document}